\numberwithin{equation}{section}
\theoremstyle{definition}
\newtheorem{defn}{Definition}[section]
\newtheorem{rem}{Remark}[section]
\theoremstyle{plain}
\newtheorem{lemm}{Lemma}[section]
\newtheorem{prop}{Proposition}[section]
\newtheorem{thm}{Theorem}[section]
\newtheorem{mainthm}{Main Theorem}[section]
\title{Hypergeometric solutions for variants of the $q$-hypergeometric equation}
\author{
	Taikei Fujii%{\footnote{Department of Mathematics, Graduate School of Science, Kobe University, Rokko, Kobe 657-8501, Japan.
	%E-mail:{ tfujii@math.kobe-u.ac.jp}}}\ \ 
	\ \ and  
	Takahiko Nobukawa%\footnote{Department of Mathematics, Graduate School of Science, Kobe University, Rokko, Kobe 657-8501, Japan.
	%	E-mail: tnobukw@math.kobe-u.ac.jp}\\\ \\
	\\
%	keywords:
%	$q$-difference equations, \\Jackson integral of Jordan-Pochhammer type,\\ very-well-poised $q$-hypergeometric series, \\{ point} configuration.\\
%	%$q$-difference equations; Barnes-type integral; \\connection matrices; Yang-Baxter equation\\ 
%	MSC2020: 33D60, 33D15.%33D70; 39A13
}
\date{}
\begin{document}
\maketitle
\renewcommand{\labelenumi}{\rm (\arabic{enumi})}
\begin{abstract}
	We introduce a configuration of a $q$-difference equation and characterize the variants of the $q$-hypergeometric equation, which were defined by Hatano-Matsunawa-Sato-Takemura, by configurations.
	We show integral solutions and series solutions for the variants of the $q$-hypergeometric equation.
	
	Key words: $q$-hypergeometric equation, Jackson integral of Jordan-Pochhammer type, very-well-poised $q$-hypergeometric equation, point configuration.
	
	2020 Mathematics Subject Classification Numbers: 33D60, 33D15, 39A13.
\end{abstract}

\section{Introduction}\label{secintro}

The Gauss hypergeometric equation
\begin{align}\label{Gausseq}
	\left[x(1-x)\frac{d^{2}}{dx^{2}}+(\gamma-(\alpha+\beta+1)x)\frac{d}{dx}-\alpha\beta\right]f(x)=0,
\end{align}
is a standard form of second order Fuchsian differential equations with three singularities{ on $\mathbb{P}^{1}=\mathbb{C}\cup\{\infty\}$}.
The equation (\ref{Gausseq}) is characterized by the following Riemann scheme:
\begin{align}\label{GaussRiesch}
	\left\{\begin{array}{ccc}
		x=0&x=1&x=\infty\\
		0&0&\alpha\\
		1-\gamma&\gamma-\alpha-\beta&\beta
	\end{array}\right\}.
\end{align}
The equation (\ref{Gausseq}) has the integral solution
\begin{align}\label{Gaussint}
	\int_{C}t^{\alpha-1}(1-t)^{\gamma-\alpha-1}(1-xt)^{-\beta}dt,
\end{align}
where $C$ is a suitable path.
Also, the equation (\ref{Gausseq}) has the series solution
\begin{align}\label{Gaussser}
	{}_{2}F_{1}\left(\begin{array}{c}
		\alpha,\beta\\
		\gamma
	\end{array};x\right)=1+\frac{\alpha\cdot\beta}{\gamma\cdot1}x+\frac{\alpha(\alpha+1)\cdot\beta(\beta+1)}{\gamma(\gamma+1)\cdot1\cdot2}x^{2}+\cdots.
\end{align}
The Riemann scheme (\ref{GaussRiesch}) and the integral solution (\ref{Gaussint}), the series solution (\ref{Gaussser}) are basic properties of the Gauss equation (\ref{Gausseq}).

On the other hand, the Riemann-Papperitz differential equation
\begin{align}\label{RiePapeq}
	\notag&\Biggl[\frac{d^{2}}{dz^{2}}+\left(\frac{1-\alpha_{1}-\beta_{1}}{z-t_{1}}+\frac{1-\alpha_{2}-\beta_{2}}{z-t_{2}}+\frac{1-\alpha_{3}-\beta_{3}}{z-t_{3}}\right)\frac{d}{dz}+\frac{1}{(z-t_{1})(z-t_{2})(z-t_{3})}\\
	&\times\left(\frac{\alpha_{1}\beta_{1}(t_{1}-t_{2})(t_{1}-t_{3})}{z-t_{1}}+\frac{\alpha_{2}\beta_{2}(t_{2}-t_{1})(t_{2}-t_{3})}{z-t_{2}}+\frac{\alpha_{3}\beta_{3}(t_{3}-t_{1})(t_{3}-t_{2})}{z-t_{3}}\right)\Biggr]g(z)=0,
\end{align}
where $\alpha_{1}+\alpha_{2}+\alpha_{3}+\beta_{1}+\beta_{2}+\beta_{3}=1$, has also similar properties as for the Gauss equation (\ref{Gausseq}).
The equation (\ref{RiePapeq}) is characterized by the Riemann scheme
\begin{align}\label{RiePapsch}
	\left\{\begin{array}{ccc}
		z=t_{1}&z=t_{2}&z=t_{3}\\
		\alpha_{1}&\alpha_{2}&\alpha_{3}\\
		\beta_{1}&\beta_{2}&\beta_{3}
	\end{array}\right\}.
\end{align}
Thus we put $\displaystyle x=\frac{z-t_{1}}{z-t_{3}}\frac{t_{2}-t_{3}}{t_{2}-t_{1}}$ and $h(x)=x^{-\alpha_{1}}(1-x)^{-\alpha_{2}}g(z)$, then the function $h(x)$ satisfies the Gauss equation (\ref{Gausseq}) with $\alpha=\alpha_{1}+\alpha_{2}+\alpha_{3}$, $\beta=\alpha_{1}+\alpha_{2}+\beta_{3}$ and $\gamma=\alpha_{1}-\beta_{1}+1$.
Since the Riemann-Papperitz equation (\ref{RiePapeq}) and the Gauss equation (\ref{Gausseq}) are transformed to each other by gauge transformations, the Riemann-Papperitz equation (\ref{RiePapeq}) has the integral solution and the series solution
\begin{align}
	&\notag (z-t_{1})^{\alpha_{1}}(z-t_{2})^{\alpha_{2}}(z-t_{3})^{\alpha_{3}}\\
	&\times \label{RiePapint}\int_{C}(t-t_{1})^{\alpha_{2}+\alpha_{3}+\beta_{1}-1}(t-t_{2})^{\alpha_{1}+\alpha_{3}+\beta_{2}-1}(t-t_{1})^{\alpha_{1}+\alpha_{2}+\beta_{3}-1}(t-z)^{-\alpha_{1}-\alpha_{2}-\alpha_{3}}dt,\\
	&\label{RiePapser}\left(\frac{z-t_{1}}{z-t_{3}}\right)^{\alpha_{1}}\left(\frac{z-t_{2}}{z-t_{3}}\right)^{\alpha_{2}}{}_{2}F_{1}\left(\begin{array}{c}
		\alpha_{1}+\alpha_{2}+\alpha_{3}, \alpha_{1}+\alpha_{2}+\beta_{3}\\
		\alpha_{1}-\beta_{1}+1
	\end{array};\frac{z-t_{1}}{z-t_{3}}\frac{t_{2}-t_{3}}{t_{2}-t_{1}}\right).
\end{align}

A $q$-difference analog of the Gauss hypergeometric equation (\ref{Gausseq}) was introduced as follows:
\begin{align}\label{Heineeq}
	[x(1-aT_{x})(1-bT_{x})-(1-T_{x})(1-cq^{-1}T_{x})]f(x)=0,
\end{align}
where $T_{x}$ is the $q$-shift operator {of $x$}, that is, $T_{x}f(x)=f(qx)$.
This equation is called Heine's $q$-hypergeometric equation.
In this paper we fix $q\in\mathbb{C}$ with $0<|q|<1$.
The equation (\ref{Heineeq}) has the integral solution and the series solution
\begin{align}
	\label{Heineint}&\int_{C}t^{\alpha-1}\frac{(qt)_{\infty}}{(ct/a)_{\infty}}\frac{(bxt)_{\infty}}{(xt)_{\infty}}d_{q}t,\\
	\label{Heineser}&{}_{2}\varphi_{1}\left(\begin{array}{c}
		a,b\\
		c
	\end{array}x\right),
\end{align}
where $a=q^{\alpha}$, and
\begin{align}
	&\int_{0}^{\tau}f(t)d_{q}t=(1-q)\sum_{n=0}^{\infty}f(\tau q^{n})\tau q^{n},\ \ \int_{\tau_{1}}^{\tau_{2}}f(t)d_{q}t=\int_{0}^{\tau_{2}}f(t)d_{q}t-\int_{0}^{\tau_{1}}f(t)d_{q}t,\\
	&(a)_{\infty}=\prod_{n=0}^{\infty}(1-aq^{n}),\ \ (a)_{m}=\frac{(a)_{\infty}}{(aq^{m})_{\infty}},\ \ 
	(a_{1},\ldots,a_{r})_{m}=(a_{1})_{m}\cdots(a_{r})_{m},\\
	&{}_{r}\varphi_{s}\left(\begin{array}{c}
		a_{1},\ldots,a_{r}\\
		b_{1},\ldots,b_{s}
	\end{array};x\right)=\sum_{n=0}^{\infty}\frac{(a_{1},\ldots,a_{r})_{n}}{(b_{1},\ldots,b_{s},q)_{n}}(-1)^{(r-s-1)n}q^{(r-s-1)\binom{n}{2}}x^{n},
\end{align}
are called the Jackson integral of $f(t)$, the $q$-Pochhammer symbol and the $q$-hypergeometric series, respectively.
The integral (\ref{Heineint}) and the series (\ref{Heineser}) are $q$-analogs of (\ref{Gaussint}) and (\ref{Gaussser}), respectively.
In the theory of $q$-difference equations, the points $x=0$ and $x=\infty$ are the fixed points of the $q$-shift operator $T_{x}$.
Thus only the points $x=0$ and $x=\infty$ can be singularities for $q$-difference equations.
The Gauss equation (\ref{Gausseq}) has singularities at $x=0$ and $x=\infty$, and then it is easy to obtain a $q$-analog of (\ref{Gausseq}).
However, the points $z=0$ and $z=\infty$ are regular points for the Riemann-Papperitz equation (\ref{RiePapeq}).
Thus it is difficult to consider properties for a $q$-analog of (\ref{RiePapeq}).

In \cite{HMST}, the variants of the $q$-hypergeometric equation of degree two and degree three were introduced.
The variant of the $q$-hypergeometric equation of degree two, {i.e. $\deg_{x}\mathcal{H}_{2}=2$,} is defined as a special case of the $q$-Heun equation, as follows:
\begin{align}
	\mathcal{H}_{2}&f(x)=0,\\
	\mathcal{H}_{2}&=\prod_{i=1}^{2}(x-q^{h_{i}+1/2}t_{i})\cdot T_{x}^{-1}+q^{\alpha_{1}+\alpha_{2}}\prod_{i=1}^{2}(x-q^{l_{i}-1/2}t_{i})\cdot T_{x}-(q^{\alpha_{1}}+q^{\alpha_{2}})x^{2}\notag\\
	&+Ex+p(q^{1/2}+q^{-1/2})t_{1}t_{2},\\
	p&=q^{(h_{1}+h_{2}+l_{1}+l_{2}+\alpha_{1}+\alpha_{2})/2},\ \ E=-p\{(q^{-h_{2}}+q^{-l_{2}})t_{1}+(q^{-h_{1}}+q^{-l_{1}})t_{2}\}.
\end{align}
The $q$-Heun equation was introduced in \cite{Hahn1971}.
Heun's differential equation is a second order Fuchsian differential equation with four singularities.
When one of four singularities is essentially non-singular, Heun's differential equation can be transformed to the Gauss hypergeometric equation with some gauge transformation.
Here, we say a singularity of a second order differential equation is essentially non-singular when the difference between the characteristic exponents is 1 and the singularity is non-logarithmic.
The way to derive the equation $\mathcal{H}_{2}f(x)=0$ from the $q$-Heun equation is also to specialize so that the point $x=0$ is essentially non-singular.
The equation $\mathcal{H}_{2}f(x)=0$ is a $q$-analog of a second order Fuchsian differential equation with four singularities $\{0,t_{1},t_{2},\infty\}$, and the point $x=0$ is essentially non-singular.
Thus this equation is essentially a $q$-analog of the Riemann-Papperitz equation with $t_{3}=\infty$.
Note that by taking the limit $t_{2}\to0$, the variant of the $q$-hypergeometric equation of degree two becomes Heine's $q$-hypergeometric equation with some change of parameters.
Several solutions of the equation $\mathcal{H}_{2}f(x)=0$ were shown in \cite{HMST} as follows:
\begin{align}
	\label{introH2int}&x^{-\alpha_{1}}\Phi^{(1)}\left(\begin{array}{c}
		q^{\lambda_{0}+\alpha_{1}}; q^{\lambda_{0}+\alpha_{1}-h_{2}+l_{2}}, q^{\lambda_{0}+\alpha_{1}-h_{1}+l_{1}}\\
		q^{\alpha_{1}-\alpha_{2}+1}
	\end{array};q^{l_{1}+1/2}\frac{t_{1}}{x},q^{l_{2}+1/2}\frac{t_{2}}{x}\right),\\
	\label{introH2ser}&x^{\lambda_{0}}{}_{3}\varphi_{2}\left(\begin{array}{c}
		x/(q^{l_{1}-1/2}t_{1}), q^{\lambda_{0}+\alpha_{1}}, q^{\lambda_{0}+\alpha_{2}}\\
		q^{h_{1}-l_{1}+1},q^{h_{2}-l_{2}+1}t_{2}/t_{1}
	\end{array};q\right).
\end{align}
Here, $\lambda_{0}=(h_{1}+h_{2}-l_{1}-l_{2}-\alpha_{1}-\alpha_{2}+1)/2$ and $\Phi^{(1)}$ is the $q$-Appell hypergeometric series
\begin{align}
	\Phi^{(1)}\left(\begin{array}{c}
		a; b_{1}, b_{2}\\
		c
	\end{array};x_{1},x_{2}\right)=\sum_{n_{1}=0}^{\infty}\sum_{n_{2}=0}^{\infty}\frac{(a)_{n_{1}+n_{2}}(b_{1})_{n_{1}}(b_{2})_{n_{2}}}{(c)_{n_{1}+n_{2}}(q)_{n_{1}}(q)_{n_{2}}}x_{1}^{n_{1}}x_{2}^{n_{2}}.
\end{align}
The $q$-Appell series $\Phi^{(1)}$ has a Jackson integral representation \cite{And}
\begin{align}
	\Phi^{(1)}\left(\begin{array}{c}
		a; b_{1}, b_{2}\\
		c
	\end{array};x_{1},x_{2}\right)=\frac{(a,c/a)_{\infty}}{(q,c)_{\infty}(1-q)}\int_{0}^{1}t^{\alpha-1}\frac{(qt,b_{1}x_{1}t,b_{2}x_{2}t)_{\infty}}{(ct/a,x_{1}t,x_{2}t)_{\infty}}d_{q}t,
\end{align}
where $a=q^{\alpha}$.
Therefore the solutions (\ref{introH2int}) and (\ref{introH2ser}) are $q$-analogs of the functions (\ref{RiePapint}) and (\ref{RiePapser}) with $t_{3}=\infty$, respectively.
In addition, the $q$-Heun equation was rediscovered in \cite{Takemura2017} as an eigenvalue problem for the fourth degenerated Ruijsenaars-van Diejen operator \cite{Ru,vD} of one variable.
In \cite{Takemura2018}, the variants of the $q$-Heun equation of degree three and degree four were introduced from the viewpoint of degenerations of the Ruijsenaars-van Diejen operator.
The variant of the $q$-hypergeometric equation of degree three is also defined by specializing the variant of the $q$-Heun equation of degree three so that $x=0$ is essentially non-singular, as follows:
\begin{align}
	\mathcal{H}_{3}&f(x)=0,\\
	\notag \mathcal{H}_{3}&=\prod_{i=1}^{3}(x-q^{h_{i}+1/2}t_{i})\cdot T_{x}^{-1}+q^{2\alpha+1}\prod_{i=1}^{3}(x-q^{l_{i}-1/2}t_{i})\cdot T_{x}\\
	\notag &+q^{\alpha}\biggl[-(q+1)x^{3}+q^{1/2}\sum_{i=1}^{3}(q^{h_{i}}+q^{l_{i}})t_{i}x^{2}\\
	\notag &-q^{(h_{1}+h_{2}+h_{3}+l_{1}+l_{2}+l_{3}+1)/2}t_{1}t_{2}t_{3}\sum_{i=1}^{3}((q^{-h_{i}}+q^{-l_{i}})/t_{i})x\\
	&+q^{(h_{1}+h_{2}+h_{3}+l_{1}+l_{2}+l_{3})/2}(q+1)t_{1}t_{2}t_{3}\biggr].
\end{align}
{Note that} the points $x=0$ and $x=\infty$ are essentially non-singular {for the variant of the $q$-Heun equation of degree four}, and then we cannot obtain some equation by a similar specialization for the variant of the $q$-Heun equation of degree four.
The variant of the $q$-hypergeometric equation of degree three is a $q$-analog of a second order Fuchsian differential equation with five singularities $\{0,t_{1},t_{2},t_{3},\infty\}$, and $x=0$ and $x=\infty$ are essentially non-singular.
Therefore the variant of the $q$-hypergoemetric equation of degree three is essentially a $q$-analog of the Riemann-Papperitz equation.
Note that the equation $\mathcal{H}_{3}f(x)=0$ becomes $\mathcal{H}_{2}f(x)=0$ in the limit $t_{3}\to\infty$ {(see \cite{HMST} or Remark \ref{Remdegeeq})}.
However, the explicit solution for the variant of the $q$-hypergeometric equation of degree three was not given in \cite{HMST}.
We have seen only conjectural solutions in \cite{HMST}.

Our aim is to obtain the integral and series solutions for the variants of the $q$-hypergeometric equation, which are $q$-analogs of the integral (\ref{RiePapint}) and the series (\ref{RiePapser}), respectively.
{We will introduce a configuration of linear $q$-difference equations in order to characterize $q$-difference equations in Definition \ref{defconfi} below.
In this paper, we will consider the following equation:
\begin{align}
	\mathcal{E}_{3}&f(x)=0,\\
	\mathcal{E}_{3}&=[x^{3}(B-AT_{x})(B-AqT_{x})-x^{2}(e_{1}(a)-qe_{1}(b)T_{x})(B-AT_{x})\notag\\
	&+x(e_{2}(a)-qe_{2}(b)T_{x})(1-T_{x})-e_{3}(a)B^{-1}(1-q^{-1}T_{x})(1-T_{x})]T_{x}^{-1},
\end{align}
where $a=(a_{1},a_{2},a_{3})$, $b=(b_{1},b_{2},b_{3})$, $a_{1}a_{2}a_{3}A=q^{2}b_{1}b_{2}b_{3}B$ and $e_{i}$ is the elementary symmetric polynomial of degree $i$.
The configuration of the equation $\mathcal{E}_{3}f(x)=0$ is in the same form as the configuration of the equation $\mathcal{H}_{3}f(x)=0$.
Thus by some gauge transformation and changes of parameters, the equation $\mathcal{E}_{3}f(x)=0$ is identified with the equation $\mathcal{H}_{3}f(x)=0$ (see Figure \ref{conqhyp3}, Figure \ref{conE3} and Remark \ref{remE3toH3}).
Also, in this paper we will consider the following equation $\mathcal{E}_{2}f(x)=0$ instead of $\mathcal{H}_{2}f(x)=0$:
\begin{align}
	\mathcal{E}_{2}&f(x)=0,\\
	\mathcal{E}_{2}&=[x^{2}(1-q^{\alpha}T_{x})(B-AT_{x})-x(e_{1}(a)-qe_{1}(b)T_{x})(1-T_{x})\notag\\
	&-e_{2}(a)B^{-1}(1-q^{-1}T_{x})(1-T_{x})]T_{x}^{-1},
\end{align}
where $a=(a_{1},a_{2})$, $b=(b_{1},b_{2})$ and $a_{1}a_{2}A=q^{\alpha+1}b_{1}b_{2}B$.
For the same reason as for the identification of the equation $\mathcal{E}_{3}f(x)=0$ and $\mathcal{H}_{3}f(x)=0$, the equation $\mathcal{E}_{2}f(x)=0$ is identified with $\mathcal{H}_{2}f(x)=0$ (see Figure \ref{conqhyp2}, Figure \ref{conE2} and Remark \ref{remE2toH2}).}
We focus on the solution (\ref{introH2int}) for the variant of the $q$-hypergeometric equation of degree two.
As mentioned above, the solution (\ref{introH2int}) has the Jackson integral representation.
This integral is a special case of the Jackson integral of Jordan-Pochhammer type defined by
\begin{align}\label{introJPint}
	\int_{C}t^{\alpha-1}\prod_{i=0}^{M}\frac{(a_{i}t)_{\infty}}{(b_{i}t)_{\infty}}d_{q}t.
\end{align}
Note that the integral (\ref{Heineint}) is the case $M=1$ of (\ref{introJPint}).
Since a solution for the equation $\mathcal{H}_{2}f(x)=0$ is written by the case $M=2$ of the integral (\ref{introJPint}), we expect that there is a solution for the equation $\mathcal{H}_{3}f(x)=0$ written by the integral (\ref{introJPint}) {with $M=3$}.
One of the main results is Theorem \ref{thmint3}, which gives integral solutions for the variant of the $q$-hypergeometric equation of degree three.
\begin{mainthm}[Theorem \ref{thmint3}]
	Let $\tau_{1}$, $\tau_{2}\in\{q/a_{1},q/a_{2},q/a_{3},q/(Ax)\}$ and $\sigma_{1}$, $\sigma_{2}\in\{b_{1},b_{2},b_{3},Bx\}$.
	We suppose $q^{\lambda}=B/A\notin q^{\mathbb{Z}}$. 
	Then the integrals
	\begin{align}
		\label{mainint1}&\int_{\tau_{1}}^{\tau_{2}}\frac{(Axt,a_{1}t,a_{2}t,a_{3}t)_{\infty}}{(Bxt,b_{1}t,b_{2}t,b_{3}t)_{\infty}}d_{q}t,\\
		\label{mainint2}&x^{\lambda}\int_{\sigma_{1}}^{\sigma_{2}}\frac{(qs/(Bx),qs/b_{1},qs/b_{2},qs/b_{3})_{\infty}}{(qs/(Ax),qs/a_{1},qs/a_{2},qs/a_{3})_{\infty}}d_{q}s,
	\end{align}
	satisfy the equation $\mathcal{E}_{3}f(x)=0$.
\end{mainthm}
These integrals are $q$-analogs of the integral (\ref{RiePapint}).
The integral solutions for the variant of the $q$-hypergeometric equation of degree two can be obtained by taking some limit to the integrals (\ref{mainint1}), (\ref{mainint2}).
For the integral solutions for the variant of the $q$-hypergeometric equation of degree two, see Theorem \ref{thmint2}.
Also another main result is Theorem \ref{thmser3}, which gives series solutions for  the variant of the $q$-hypergeometric equation of degree three.
\begin{mainthm}[Theorem \ref{thmser3}]\label{Mainthm2}
%	Let $(i,j,k)$ and $(i',j',k')$ be permutations of $(1,2,3)$.
	The functions
	\begin{align}
		&\frac{(Axq/a_{2})_{\infty}}{(Bxq/a_{2})_{\infty}}{}_{8}W_{7}\left(\frac{a_{3}A}{a_{2}B}; \frac{qb_{1}}{a_{2}}, \frac{qb_{2}}{a_{2}}, \frac{qb_{3}}{a_{2}}, \frac{a_{3}}{Bx}, \frac{A}{B}; \frac{qBx}{a_{1}}\right),\\
		&\frac{(a_{3}Ax/(b_{1}b_{3}),a_{3}Ax/(b_{2}b_{3}),qAx/a_{2})_{\infty}}{(qBx/a_{1},qBx/a_{2},qa_{3}Ax/(a_{2}a_{3}))_{\infty}}\notag\\
		&\times{}_{8}W_{7}\left(\frac{a_{3}Ax}{a_{2}b_{3}};\frac{qBx}{a_{2}},\frac{qb_{1}}{a_{2}},\frac{qb_{2}}{a_{2}},\frac{a_{3}}{b_{3}},\frac{Ax}{b_{3}};\frac{qb_{3}}{a_{1}}\right),\\
		&\frac{1}{x}\frac{(qa_{1}/(Ax),Ax/a_{1},a_{2}a_{3}/(b_{3}Bx),qa_{2}/(Ax),qa_{3}/(Ax))_{\infty}}{(qBx/a_{1},qb_{1}/(Ax),qb_{2}/(Ax),qb_{3}/(Ax),qa_{2}a_{3}/(b_{3}Ax))_{\infty}}\notag\\
		&\times{}_{8}W_{7}\left(\frac{a_{2}a_{3}}{b_{3}Ax};\frac{qb_{1}}{Ax},\frac{qB}{A},\frac{qb_{2}}{Ax},\frac{a_{2}}{b_{3}},\frac{a_{3}}{b_{3}};\frac{qb_{3}}{a_{1}}\right),\\
		\notag &\frac{1}{x}\frac{(qa_{1}/(Ax),qa_{2}/(Ax),qa_{3}/(Ax),Ax/a_{1},a_{2}a_{3}/(b_{1}Bx),a_{2}a_{3}/(b_{2}Bx),a_{2}a_{3}/(b_{3}Bx))_{\infty}}{(qb_{1}/(Ax),qb_{2}/(Ax),qb_{3}/(Ax),qa_{2}a_{3}/(ABx^{2}))_{\infty}}\\
		&\times{}_{8}W_{7}\left(\frac{a_{2}a_{3}}{ABx^{2}};\frac{qb_{1}}{Ax},\frac{qb_{2}}{Ax},\frac{qb_{3}}{Ax},\frac{a_{2}}{Bx},\frac{a_{3}}{Bx};\frac{qBx}{a_{1}}\right),\\
		&\frac{(qAx/a_{1},a_{1}/(Ax),a_{2}a_{3}/(b_{3}Bx))_{\infty}}{(qb_{1}/(Ax),qb_{2}/(Ax),qBx/a_{1})_{\infty}}{}_{8}W_{7}\left(\frac{a_{2}a_{3}}{a_{1}b_{3}};\frac{qBx}{a_{1}},\frac{qb_{1}}{a_{1}},\frac{qb_{2}}{a_{1}},\frac{a_{2}}{b_{3}},\frac{a_{3}}{b_{3}};\frac{qb_{3}}{Ax}\right),\\
		&\frac{(qAx/a_{1},a_{1}/(Ax),a_{2}a_{3}/(b_{1}Bx),a_{2}a_{3}/(b_{2}Bx))_{\infty}}{(qb_{1}/(Ax),qb_{2}/(Ax),qb_{3}/(Ax),qBx/a_{1},qa_{2}a_{3}/(a_{1}Bx))_{\infty}}\notag\\
		&\times{}_{8}W_{7}\left(\frac{a_{2}a_{3}}{a_{1}Bx};\frac{qb_{1}}{a_{1}},\frac{qb_{2}}{a_{1}},\frac{qb_{3}}{a_{1}},\frac{a_{2}}{Bx},\frac{a_{3}}{Bx};\frac{qB}{A}\right),
	\end{align}
	satisfy the equation $\mathcal{E}_{3}f(x)=0$.
\end{mainthm}
Here, the function ${}_{8}W_{7}(a;b,c,d,e,f;x)$ is the very-well-poised $q$-hypergeometric series
\begin{align}
	{}_{8}W_{7}(a;b,c,d,e,f;x)=\sum_{n=0}^{\infty}\frac{1-aq^{2n}}{1-a}\frac{(a,b,c,d,e,f)_{n}}{(q,qa/b,qa/c,qa/d,qa/e,qa/f)_{n}}x^{n}.
\end{align}
Some series in Main Theorem \ref{Mainthm2} formally can be considered as $q$-analogs of the series (\ref{RiePapser}).
In addition, the equation $\mathcal{H}_{3}f(x)=0$ has some symmetries, so we can get many solutions for $\mathcal{H}_{3}f(x)=0$ by some transformations acting on solutions in Main Theorem \ref{Mainthm2}.
For more details, see section \ref{secser}.

%The series solutions for the variant of the $q$-hypergeometric equation of degree two, which are written by ${}_{3}\varphi_{2}$, are also obtained by taking $t_{3}\to\infty$ (see Theorem \ref{thmser2}).
%Moreover, by taking the limit $t_{2}\to0$ to the solutions ${}_{3}\varphi_{2}$, we get some series solutions for Heine's $q$-hypergeometric equation, which are written by ${}_{2}\varphi_{1}$, ${}_{2}\varphi_{2}$ and ${}_{3}\varphi_{2}$, ${}_{3}\varphi_{1}$ for terminating cases.
%Many properties of Heine's equation (cf. $q$-analogs of Kummer's 24 solutions \cite{Hahn1949}, some transformation formulas for ${}_{2}\varphi_{1}$ \cite{GR}) are explained by our results.

The contents of this paper are as follows.
In section \ref{seceq}, we introduce a configuration of a $q$-difference equation, and characterize the variants of the $q$-hypergeometric equation by the configurations.
In section \ref{secint}, we show the integral solutions for the variants of the $q$-hypergeometric equation.
In section \ref{secser}, we show the series solutions for the variants of the $q$-hypergeometric equation $\mathcal{E}_3f(x)=0$.
In section \ref{secsum}, we give a summary of this paper and discuss related problems.
%%%%%%%%%%%%%%%%%%%%%%%%%%%%%%%%%%

\section{Variants of the $q$-hypergeometric equation}\label{seceq}
In this section, first we recall some fundamental concepts for linear $q$-difference equations, and define a configuration of a $q$-difference equation.
Next, we introduce variants of the $q$-hypergeometric equation defined in \cite{HMST}, and characterize them by configurations.
Variants of the $q$-hypergeometric equation are a specialization of the $q$-Heun equation or the variant of it.
Thus we introduce and characterize the $q$-Heun equation and the variant.
\subsection{Linear $q$-difference equation}\label{subseceq}
In this subsection, we recall some fundamental concepts of linear $q$-difference equations, such as characteristic roots and a non-logarithmic singularity. 
We then introduce a configuration of a $q$-difference equation.
We consider the $q$-difference equation
\begin{align}\label{geneeq}
	\mathcal{L}f(x)=0,\ \ \mathcal{L}=x^{M'}\cdot\sum_{i=0}^{M}\sum_{j=0}^{N}a_{i,j}x^{i}T_{x}^{j}\cdot (T_{x})^{N'}.
\end{align}
Also, we consider the solution
\begin{align}
	f(x)=x^{\lambda}\sum_{n=0}^{\infty}c_{n}x^{n},\ \ c_{0}=1.
\end{align}
The operator $\mathcal{L}$ can be rewritten as
\begin{align}\label{geneeqL}
	\mathcal{L}=x^{M'}(x^{M}L_{M}(T_{x})+\cdots+x^{0}L_{0}(T_{x})),
\end{align}
where $L_{i}$ is some Laurent polynomial.
Hence we have
\begin{align}
	\mathcal{L}f(x)=\sum_{n=0}^{\infty}\sum_{m=0}^{M}L_{m}(q^{\lambda+n})c_{n}x^{\lambda+M'+n+m}.
\end{align}
The condition of $\mathcal{L}f(x)=0$ is equivalent to the following conditions:
\begin{align}
	\begin{cases}
		c_{0}L_{0}(q^{\lambda})=0.\\
		c_{1}L_{0}(q^{\lambda+1})+c_{0}L_{1}(q^{\lambda})=0.\\
		\vdots
	\end{cases}
\end{align}
Since $c_{0}=1$, we have $L_{0}(q^{\lambda})=0$.
\begin{defn}
	For the equation (\ref{geneeq}), the roots of $L_{0}(y)=0$ are called the characteristic roots at $x=0$.
	Similarly, the roots of $L_{M}(y)=0$ are called the characteristic roots at $x=\infty$.
\end{defn}
\begin{rem}
	If $a$ is a characteristic root at $x=0$, then we obtain a solution of (\ref{geneeq}) in the form
	\begin{align}
		x^{\alpha}(1+O(x)),
	\end{align}
	where $a=q^{\alpha}$. 
	Also if $b$ is a characteristic root at $x=\infty$, then we obtain a solution of (\ref{geneeq}) in the form
	\begin{align}
		x^{\beta}(1+O(x^{-1})),
	\end{align}
	where $b=q^{\beta}$.
	In the general theory of linear $q$-difference equations, $\alpha$ and $-\beta$ are often called the characteristic exponents.
\end{rem}
For general theory of linear $q$-difference equations, if $q^{\alpha}$ and $q^{\alpha+n}$ are characteristic roots at $x=0$, where $n\in\mathbb{Z}_{>0}$, then we may need logarithmic terms for {some} solution.
For more details, see \cite{Ada}.
In this case, the equation (\ref{geneeq}) has the solution $x^{\alpha}\sum_{n=0}^{\infty}c_{n}x^{n}$ if this equation satisfies the condition
\begin{align}
	\sum_{m=0}^{n-1}c_{m}L_{n-i}(q^{\alpha+i})=0.
\end{align}
These can be considered as replacing $x=0$ and $x=\infty$.
In this paper, we will consider the equation that $a$ and $aq$ are the characteristic roots at $x=0$ or $x=\infty$ (i.e. $n=1$).
In this case, the non-logarithmic condition is simple:
\begin{lemm}\label{lemnonlog}
	For the equation (\ref{geneeq}), we have
	\begin{itemize}
		\item[(1)] $x=0$ is non-logarithmic and $a$, $aq$ are characteristic roots at $x=0$ if and only if $L_{1}(a)=0$.
		\item[(2)] $x=\infty$ is non-logarithmic and $a$, $aq^{-1}$ are characteristic roots at $x=\infty$ if and only if $L_{M-1}(a)=0$.
	\end{itemize}
\end{lemm}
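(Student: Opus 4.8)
The plan is to read off the recurrence for the coefficients $c_{n}$ directly from the expansion of $\mathcal{L}f(x)$ computed just above, and then to locate the single resonance at which a logarithmic term can be forced. Writing $f(x)=x^{\lambda}\sum_{n\ge 0}c_{n}x^{n}$ with $c_{0}=1$ and collecting the coefficient of $x^{\lambda+M'+k}$ in $\mathcal{L}f(x)=\sum_{n}\sum_{m}L_{m}(q^{\lambda+n})c_{n}x^{\lambda+M'+n+m}$, the equation $\mathcal{L}f(x)=0$ is equivalent to
\[
\sum_{m=0}^{\min(k,M)}L_{m}(q^{\lambda+k-m})\,c_{k-m}=0\qquad(k\ge 0).
\]
For $k=0$ this is $L_{0}(q^{\lambda})=0$, so $a=q^{\lambda}$ must be a characteristic root at $x=0$; for $k\ge 1$ it reads $L_{0}(q^{\lambda+k})c_{k}=-\sum_{m=1}^{\min(k,M)}L_{m}(q^{\lambda+k-m})c_{k-m}$, which determines $c_{k}$ uniquely as long as $L_{0}(q^{\lambda+k})\ne 0$.

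For part (1) I would take $\lambda=\alpha$ with $a=q^{\alpha}$. Under the standing hypothesis that $a$ and $aq$ are the characteristic roots at $x=0$, the zeros of $L_{0}$ on the line $q^{\alpha+\mathbb{Z}}$ are exactly $q^{\alpha}$ and $q^{\alpha+1}$, so $L_{0}(q^{\alpha+k})\ne 0$ for every $k\ge 2$ and the recurrence runs without obstruction except possibly at $k=1$. At $k=1$ the relation becomes $L_{0}(aq)\,c_{1}=-L_{1}(a)$; since $aq$ is a characteristic root we have $L_{0}(aq)=0$, the left-hand side vanishes, and consistency forces precisely $L_{1}(a)=0$. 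When $L_{1}(a)=0$ the coefficient $c_{1}$ is free and all higher $c_{k}$ are then uniquely determined, yielding an honest power-series (non-logarithmic) solution; when $L_{1}(a)\ne 0$ the $k=1$ relation is contradictory, a logarithmic term must be introduced, and $x=0$ is logarithmic. This is the desired equivalence.

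Part (2) is the mirror image under the symmetry $x\leftrightarrow\infty$ alluded to in the text. Expanding instead as $f(x)=x^{\beta}\sum_{n\ge 0}d_{n}x^{-n}$ with $a=q^{\beta}$ and collecting coefficients of descending powers of $x$ interchanges the roles of $L_{0}$ and $L_{M}$ and of $T_{x}$ and $T_{x}^{-1}$: the characteristic equation at infinity is $L_{M}(q^{\beta})=0$, the recurrence becomes $L_{M}(q^{\beta-k})d_{k}=-\sum_{m=1}^{\min(k,M)}L_{M-m}(q^{\beta-k+m})d_{k-m}$, and the sole resonance, now with the second root $aq^{-1}$, occurs at $k=1$, where the compatibility condition reads $L_{M-1}(a)=0$. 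Formally this is obtained from (1) by the relabeling $L_{m}\mapsto L_{M-m}$, $q\mapsto q^{-1}$, so I would simply invoke (1) rather than repeat the computation.

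I expect the only point requiring genuine care to be the claim that no resonance occurs beyond $k=1$: this is exactly where I must use that, in the configuration under consideration, $a$ and $aq$ exhaust the characteristic roots lying in a single $q$-line, so that $L_{0}$ (respectively $L_{M}$) has no further zero of the form $q^{\alpha+k}$ with $k\ge 2$ (respectively $q^{\beta-k}$). Granting that, the $k=1$ step isolates $L_{1}(a)$ (respectively $L_{M-1}(a)$) as the unique obstruction, and the stated biconditional follows.
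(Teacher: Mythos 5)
Your proof is correct and follows essentially the same route as the paper: the paper gives no separate proof of this lemma, but the text immediately preceding it sets up exactly your recurrence and states the general non-logarithmic compatibility condition, of which the lemma is the specialization $n=1$, where the single resonance forces $c_{0}L_{1}(a)=0$. Your explicit treatment of the resonance at $k=1$ (and its mirror at $x=\infty$), including the remark that one must assume $a$, $aq$ exhaust the characteristic roots on the $q$-line so that no higher resonance occurs, is precisely the intended argument.
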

\begin{rem}
	For more general case, see Proposition 3.1 in \cite{MY}.
\end{rem}
As an operator, the $q$-shift operator $T_{x}$ plays a similar role to the multiplication operator $x$, {i.e. $T_{x}x=qxT_{x}$}.
Thus  we can consider ``characteristic roots at $T_{x}=0$ and $T_{x}=\infty$''.
These roots are important for characterizing linear $q$-difference equations.
\begin{defn}
	The operator $\mathcal{L}$ can be rewritten as
	\begin{align}\label{geneeqP}
		\mathcal{L}=(P_{N}(x)T_{x}^{N}+\cdots+P_{0}(x)T_{x}^{0})T_{x}^{N'}.
	\end{align}
	The roots of the polynomial $P_{0}(x)$ (resp. $P_{N}(x)$) are called the characteristic roots at $T_{x}=0$ (resp. $T_{x}=\infty$).
\end{defn}
\begin{defn}\label{defconfi}
	Let $\{a_{j}\}_{1\leq j\leq N}$, $\{b_{j}\}_{1\leq j\leq N}$, $\{c_{i}\}_{1\leq i\leq M}$, $\{d_{i}\}_{1\leq i\leq M}$ be the characteristic roots of the equation (\ref{geneeq}) at $x=0$, $x=\infty$, $T_{x}=0$, $T_{x}=\infty$, respectively.
	Then Figure \ref{config} is called the configuration of (\ref{geneeq}).
	\begin{figure}[H]
		\centering
		\begin{tikzpicture}
			%\draw [help lines] (0,0) grid (10,6);
			\draw (1,0)--(1,7);
			\draw [below] (1,0) node {$x=0$};
			\draw (6,0)--(6,7);
			\draw [below] (6,0) node {$x=\infty$};
			\draw (0,1)--(7,1);
			\draw [right] (7,1) node {$T_{x}=0$};
			\draw (0,6)--(7,6);
			\draw [right] (7,6) node {$T_{x}=\infty$};
			\filldraw (1,5) circle [radius=0.05];
			\draw [left] (1,5) node {$a_{1}$};
			\filldraw (1,4) circle [radius=0.05];
			\draw [left] (1,4) node {$a_{2}$};
			\filldraw (1,3) circle [radius=0.05];
			\draw [left] (1,3) node {$\vdots$};
			\filldraw (1,2) circle [radius=0.05];
			\draw [left] (1,2) node {$a_{N}$};
			\filldraw (6,5) circle [radius=0.05];
			\draw [right] (6,5) node {$b_{1}$};
			\filldraw (6,4) circle [radius=0.05];
			\draw [right] (6,4) node {$b_{2}$};
			\filldraw (6,3) circle [radius=0.05];
			\draw [right] (6,3) node {$\vdots$};
			\filldraw (6,2) circle [radius=0.05];
			\draw [right] (6,2) node {$b_{N}$};
			\filldraw (2,1) circle [radius=0.05];
			\draw [below] (2,1) node{$c_{1}$};
			\filldraw (3,1) circle [radius=0.05];
			\draw [below] (3,1) node{$c_{2}$};
			\filldraw (4,1) circle [radius=0.05];
			\draw [below] (4,1) node{$\cdots$};
			\filldraw (5,1) circle [radius=0.05];
			\draw [below] (5,1) node{$c_{M}$};
			\filldraw (2,6) circle [radius=0.05];
			\draw [above] (2,6) node{$d_{1}$};
			\filldraw (3,6) circle [radius=0.05];
			\draw [above] (3,6) node{$d_{2}$};
			\filldraw (4,6) circle [radius=0.05];
			\draw [above] (4,6) node{$\cdots$};
			\filldraw (5,6) circle [radius=0.05];
			\draw [above] (5,6) node{$d_{M}$};
		\end{tikzpicture}
		\caption{the configuration of the equation $\mathcal{L}f(x)=0$ (\ref{geneeq}).}
		\label{config}
	\end{figure}
\end{defn}
\begin{rem}
	We rewrite the operator $\mathcal{L}$ in two form (\ref{geneeqL}), (\ref{geneeqP}).
	By equating the coefficients, we find the relation $a_{1}\cdots a_{N} d_{1}\cdots d_{M}=b_{1}\cdots b_{N} c_{1}\cdots c_{M}$.
\end{rem}
\begin{rem}
	The name ``configuration'' comes from the point configuration of algebraic curves.
	The method to characterize $q$-difference equations by the configurations of algebraic curves was discussed in the context of $q$-Painlev\'{e} equation (cf. {\cite{KMNOY}, \cite{Ya}}).
	We use a quantization of this method by the quantum curve $\mathcal{L}=0$ to characterize the linear $q$-difference equation $\mathcal{L}f(x)=0$.
\end{rem}

%%%%%%%%%%%%%%%%%%%%

\subsection{Variants of the $q$-hypergeometric equation}\label{subseccon}
In this subsection, we characterize variants of the $q$-hypergeometric equation introduced in \cite{HMST} by configurations.
As mentioned in section \ref{secintro}, the variants of the $q$-hypergeometric equation of degree two and degree three were defined by a special case of the $q$-Heun equation and the variant of the $q$-Heun equation of degree three, respectively.
We characterize not only the variants of the $q$-hypergeometric equation but also the $q$-Heun equation and the variant of the $q$-Heun equation of degree three.
The $q$-Heun equation was first introduced by \cite{Hahn1971}, and rediscovered by the eigenvalue problem for $A^{\langle 4\rangle}$ in {\cite{Takemura2017}}.
Here $A^{\langle i\rangle}$ is the $i$-th degenerated Ruijsenaars-van Diejen operator of one variable.
Also the variant of the $q$-Heun equation of degree three was defined as the eigenvalue problem for $A^{\langle3\rangle}$ in \cite{Takemura2018}.
In this paper we do not discuss {about} the Ruijsenaars-van Diejen operator.
For more details, see {\cite{Ru,vD}}.
We introduce the $q$-Heun equation and the variant of the $q$-Heun equation of degree three by the forms of {\cite{Takemura2018}}.
\begin{defn}[{\cite{Takemura2018}}]
	The $q$-Heun equation is defined by
	\begin{align}\label{defqheun}
		(A^{\langle 4\rangle}-E)f(x)=0,
	\end{align}
	where
	\begin{align}
		\notag A^{\langle4\rangle}=&x^{-1}\prod_{i=1}^{2}(x-q^{h_{i}+1/2}t_{i})  \cdot T_{x}^{-1}+q^{\alpha_{1}+\alpha_{2}}x^{-1}\prod_{i=1}^{2}(x-q^{l_{i}-1/2}t_{i}) \cdot T_{x}\\
		&-(q^{\alpha_{1}}+q^{\alpha_{2}})x+q^{(h_{1}+h_{2}+l_{1}+l_{2}+\alpha_{1}+\alpha_{2})/2}(q^{\beta/2}+q^{-\beta/2})t_{1}t_{2}x^{-1},
	\end{align}
	and the variant of $q$-Heun equation of degree three is defined by
	\begin{align}\label{defqheun3}
		(A^{\langle3\rangle}-E)f(x)=0,
	\end{align}
	where
	\begin{align}
		\notag A^{\langle3\rangle}=&x^{-1}\prod_{i=1}^{3}(x-q^{h_{i}+1/2}t_{i}) \cdot T_{x}^{-1}+x^{-1}\prod_{i=1}^{3}(x-q^{l_{i}-1/2}t_{i}) \cdot T_{x}\\
		&-(q^{1/2}+q^{-1/2})x^{2}+\sum_{i=1}^{3}(q^{h_{i}}+q^{l_{i}})t_{i}x+q^{(l_{1}+l_{2}+l_{3}+h_{1}+h_{2}+h_{3})/2}(q^{\beta/2}+q^{-\beta/2})t_{1}t_{2}t_{3}x^{-1}.
	\end{align}
\end{defn}
We can easily find the configurations of the equations (\ref{defqheun}) and (\ref{defqheun3}).
The configuration of (\ref{defqheun}) is given as follows.
\begin{figure}[H]
	\centering
	\begin{tikzpicture}
		%\draw [help lines] (0,0) grid (10,6);
		\draw (1,0)--(1,6);
		\draw [below] (1,0) node {$x=0$};
		\draw (6,0)--(6,6);
		\draw [below] (6,0) node {$x=\infty$};
		\draw (0,1)--(7,1);
		\draw [right] (7,1) node {$T_{x}=0$};
		\draw (0,5)--(7,5);
		\draw [right] (7,5) node {$T_{x}=\infty$};
		\filldraw (1,2) circle [radius=0.05];
		\draw [left] (1,2) node {$q^{\lambda_{-}}$};
		\filldraw (1,4) circle [radius=0.05];
		\draw [left] (1,4) node {$q^{\lambda_{+}}$};
		\filldraw (6,2) circle [radius=0.05];
		\draw [right] (6,2) node {$q^{-\alpha_{2}}$};
		\filldraw (6,4) circle [radius=0.05];
		\draw [right] (6,4) node {$q^{-\alpha_{1}}$};
		\filldraw (2,1) circle [radius=0.05];
		\draw [below] (2,1) node{$q^{h_{1}+1/2}t_{1}$};
		\filldraw (5,1) circle [radius=0.05];
		\draw [below] (5,1) node{$q^{h_{2}+1/2}t_{2}$};
		\filldraw (2,5) circle [radius=0.05];
		\draw [above] (2,5) node{$q^{l_{1}-1/2}t_{1}$};
		\filldraw (5,5) circle [radius=0.05];
		\draw [above] (5,5) node{$q^{l_{2}-1/2}t_{2}$};
	\end{tikzpicture}
	\caption{the configuration of the equation {$(A^{\langle4\rangle}-E)f(x)=0$} (\ref{defqheun}).}
	\label{conqheun}
\end{figure}
Here, $\lambda_{\pm}=\frac{1}{2}(h_{1}+h_{2}-l_{1}-l_{2}-\alpha_{1}-\alpha_{2}\pm\beta)$. 
Also, the configuration of (\ref{defqheun3}) is given as follows.
\begin{figure}[H]
	\centering
	\begin{tikzpicture}
		%\draw [help lines] (0,0) grid (10,6);
		\draw (1,0)--(1,6);
		\draw [below] (1,0) node {$x=0$};
		\draw (9,0)--(9,6);
		\draw [below] (9,0) node {$x=\infty$};
		\draw (0,1)--(10,1);
		\draw [right] (10,1) node {$T_{x}=0$};
		\draw (0,5)--(10,5);
		\draw [right] (10,5) node {$T_{x}=\infty$};
		\filldraw (1,2) circle [radius=0.05];
		\draw [left] (1,2) node {$q^{\mu_{-}}$};
		\filldraw (1,4) circle [radius=0.05];
		\draw [left] (1,4) node {$q^{\mu_{+}}$};
		\filldraw (9,3) circle [radius=0.05];
		\draw (9,3) circle [radius=0.1];
		\draw [right] (9,3) node {$q^{1/2}, q^{-1/2}$};
		\filldraw (2,1) circle [radius=0.05];
		\draw [below] (2,1) node{$q^{h_{1}+1/2}t_{1}$};
		\filldraw (5,1) circle [radius=0.05];
		\draw [below] (5,1) node{$q^{h_{2}+1/2}t_{2}$};
		\filldraw (8,1) circle [radius=0.05];
		\draw [below] (8,1) node{$q^{h_{3}+1/2}t_{3}$};
		\filldraw (2,5) circle [radius=0.05];
		\draw [above] (2,5) node{$q^{l_{1}-1/2}t_{1}$};
		\filldraw (5,5) circle [radius=0.05];
		\draw [above] (5,5) node{$q^{l_{2}-1/2}t_{2}$};
		\filldraw (8,5) circle [radius=0.05];
		\draw [above] (8,5) node{$q^{l_{3}-1/2}t_{3}$};
	\end{tikzpicture}
	\caption{the configuration of the equation {$(A^{\langle3\rangle}-E)f(x)=0$} (\ref{defqheun3}).}
	\label{conqheun3}
\end{figure}
Here, $\mu_{\pm}=\frac{1}{2}(h_{1}+h_{2}+h_{3}-l_{1}-l_{2}-l_{3}+3\pm\beta)$.
The double point \tikz{\filldraw (0,0) circle [radius=0.05]; \draw (0,0) circle [radius=0.1];} in Figure \ref{conqheun3} means a non-logarithmic singularity which has characteristic roots $a$, $aq$.
In this paper, we use the double point in the sense as above.
In fact, figure \ref{conqheun} and \ref{conqheun3} characterize the equations (\ref{defqheun}) and (\ref{defqheun3}), respectively.
\begin{prop}\label{propconqheun}
	We consider the $q$-difference equations
	\begin{align}
		&\mathcal{L}_{2}f(x)=\sum_{i=-1}^{1}\sum_{j=-1}^{1}a_{i,j}x^{i}T_{x}^{j}f(x)=0,\\
		&\mathcal{L}_{3}f(x)=\sum_{i=-1}^{2}\sum_{j=-1}^{1}a_{i,j}x^{i}T_{x}^{j}f(x)=0.
	\end{align}
	\begin{itemize}
		\item[(1)] If the configuration of $\mathcal{L}_{2}f(x)=0$ is given by Figure \ref{conqheun}, then the equation $\mathcal{L}_{2}f(x)=0$ coincides with the $q$-Heun equation (\ref{defqheun}) up to the eigenvalue $E$.
		\item[(2)] If the configuration of $\mathcal{L}_{3}f(x)=0$ is given by Figure \ref{conqheun3}, then the equation $\mathcal{L}_{3}f(x)=0$ coincides with the variant of the $q$-Heun equation of degree three (\ref{defqheun3}) up to the eigenvalue $E$.
	\end{itemize}
\end{prop}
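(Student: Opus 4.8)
The plan is to prove both parts by the same two-step strategy. First I would check, by a direct expansion, that the operators $A^{\langle4\rangle}$ and $A^{\langle3\rangle}$ themselves realize Figure \ref{conqheun} and Figure \ref{conqheun3}: one reads off the four characteristic polynomials of each and verifies their roots are the points drawn (and, in the degree-three case, that the point at $x=\infty$ is the non-logarithmic double point). Second, and this is the substance, I would show that the configuration alone determines every coefficient of $\mathcal{L}_2$ (resp. $\mathcal{L}_3$) except the central one, which is exactly the eigenvalue. Combining the two steps, the family of operators with the prescribed configuration is, modulo an overall scalar, precisely $\{A^{\langle4\rangle}-E\}$ (resp. $\{A^{\langle3\rangle}-E\}$), which is the assertion.

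For part (1), arrange the nine coefficients into the array $(a_{i,j})_{i,j\in\{-1,0,1\}}$. Writing $\mathcal{L}_2$ in the form \eqref{geneeqL} collects powers of $x$, so that the row $i=-1$ is the polynomial $L_0$ and the row $i=1$ is $L_2$; writing it in the form \eqref{geneeqP} collects powers of $T_x$, so that the column $j=-1$ is $P_0$ and the column $j=1$ is $P_2$. Thus the four boundary lines of the array are exactly the four characteristic polynomials, while the centre $a_{0,0}$ lies on no boundary line. Each boundary polynomial is a quadratic whose two roots are prescribed by the configuration, hence by Vieta's formulas it is determined up to its leading coefficient. After using the scalar freedom $\mathcal{L}_2\mapsto c\,\mathcal{L}_2$ to normalize one corner, I would propagate around the square: the two columns $P_0,P_2$ fix the four corners together with their interior entries, and the two rows $L_0,L_2$ then fix the remaining interior entries. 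The only coupling is that the corner $a_{-1,-1}$ is produced both by $P_0$ and by $L_0$; the two values coincide precisely because of the multiplicative relation $a_1a_2\,d_1d_2=b_1b_2\,c_1c_2$ noted after Definition \ref{defconfi}, which for the roots of Figure \ref{conqheun} holds by the very definition of $\lambda_\pm$ (it fixes $\lambda_++\lambda_-$). Consequently the eight boundary conditions leave a two-parameter family, namely the scale and $a_{0,0}$, so modulo scaling the operator carries a single free constant, identified with $-E$.

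For part (2) the same bookkeeping is applied to the array $(a_{i,j})$ with $i\in\{-1,0,1,2\}$, $j\in\{-1,0,1\}$. Now the two columns $P_0,P_2$ are cubics carrying the three roots $q^{h_i+1/2}t_i$ and $q^{l_i-1/2}t_i$, the bottom row $L_0$ is the quadratic with roots $q^{\mu_\pm}$, and the top row $L_3$ is the quadratic with the two roots $q^{1/2},q^{-1/2}$ of the double point. The genuinely new ingredient is this double point: beyond fixing $L_3$, it imposes through Lemma \ref{lemnonlog}(2) with $a=q^{1/2}$ the non-logarithmic condition $L_2(q^{1/2})=0$, and this is exactly the equation that pins down the lone interior coefficient $a_{1,0}$ on the line $j=0$. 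As in part (1) the corner consistency at $a_{-1,-1}$ is the single redundancy, guaranteed by the product relation and the definition of $\mu_\pm$, so again $a_{0,0}=-E$ is the only free parameter.

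The step I expect to be the main obstacle is the corner-consistency and redundancy count: one must verify that \emph{exactly} one of the root conditions is dependent, so that precisely a one-parameter (eigenvalue) family survives rather than a larger or smaller space. In the degree-three case this is compounded by the need to translate the double point correctly into the single extra equation $L_2(q^{1/2})=0$ via Lemma \ref{lemnonlog}. These are exactly the places where the specific parametrizations $\lambda_\pm$ and $\mu_\pm$, together with the constraints on the sums of exponents, are actually used, and care is required to keep the non-degeneracy of the leading (corner) coefficients so that the characteristic polynomials have the stated degrees and roots.
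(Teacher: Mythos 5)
Your proposal is correct and takes essentially the same route as the paper's proof: each boundary row/column of the coefficient array is determined up to scale by the prescribed characteristic roots, the double point at $x=\infty$ supplies the extra equation $L_{2}(q^{1/2})=0$ via Lemma \ref{lemnonlog} (2), which pins down the interior coefficient $a_{1,0}$, and only the central coefficient $a_{0,0}=-E$ remains free. The differences are purely presentational: the paper works out case (2) and declares (1) analogous, and it leaves implicit both the verification that $A^{\langle 4\rangle}$, $A^{\langle 3\rangle}$ realize the figures and the corner-consistency count that you make explicit via the product relation.
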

\begin{proof}
	We prove only (2) because we can show (1) in the same way as the proof of (2).
	We assume that the configuration of $\mathcal{L}_{3}f(x)=0$ is given by Figure \ref{conqheun3}.
	Since the characteristic roots at $x=0$ are $q^{\mu_{+}}$, $q^{\mu_{-}}$, the ratio of $a_{-1,-1}$, $a_{-1,0}$, $a_{-1,1}$ is determined.
	More precisely, we have $a_{-1,-1}T_{x}^{-1}+a_{-1,0}+a_{-1,1}T_{x}=a_{-1,1}T_{x}^{-1}(T_{x}-q^{\mu_{+}})(T_{x}-q^{\mu_{-}})$.
	In the same way the ratios of $a_{1,-1}$, $a_{1,0}$, $a_{1,1}$, $a_{-1,-1}$, $a_{0,-1}$, $a_{1,-1}$, $a_{2,-1}$, $a_{-1,1}$, $a_{0,1}$, $a_{1,1}$, $a_{2,1}$ are determined by the characteristic roots.
	Using Lemma \ref{lemnonlog} (2), we have $a_{1,-1}q^{-1/2}+a_{1,0}+a_{1,1}q^{1/2}=0$.
	Since the ratio of $a_{1,-1}$, $a_{1,1}$ is already determined, we obtain the ratio of $a_{1,-1}$, $a_{1,0}$.
	Thus we find the ratio of $\{a_{i,j}\mid (i,j)\neq(0,0)\}$.
	Therefore $\mathcal{L}_{3}$ coincides with the equation (\ref{defqheun3}) up to the eigenvalue $E$.
\end{proof}
\begin{rem}
	For equations (\ref{defqheun}) and (\ref{defqheun3}), the eigenvalue $E$ is independent of their configurations.
	It is reasonable to regard $E$ as an accessory parameter.
\end{rem}
Next, we introduce the variants of the $q$-difference equation defined by \cite{HMST}.
The variant of the $q$-hypergeometric equation of degree two (resp. degree three) was defined by specializing the $q$-Heun equation (resp. the variant of the $q$-Heun equation of degree three) so that the point $x=0$ is essentially non-singular.
\begin{defn}[{\cite{HMST}}]
	The variant of the $q$-hypergeometric equation of degree two is defined by
	\begin{align}\label{defqhyp2}
		\mathcal{H}_{2}f(x)=0,
	\end{align}
	where
	\begin{align}
		\mathcal{H}_{2}=&\prod_{i=1}^{2}(x-q^{h_{i}+1/2}t_{i})\cdot T_{x}^{-1}+q^{\alpha_{1}+\alpha_{2}}\prod_{i=1}^{2}(x-q^{l_{i}-1/2}t_{i})\cdot T_{x}-(q^{\alpha_{1}}+q^{\alpha_{2}})x^{2}\notag\\
		&+Ex+p(q^{1/2}+q^{-1/2})t_{1}t_{2},\\
		p=&q^{(h_{1}+h_{2}+l_{1}+l_{2}+\alpha_{1}+\alpha_{2})/2},\ \ E=-p\{(q^{-h_{2}}+q^{-l_{2}})t_{1}+(q^{-h_{1}}+q^{-l_{1}})t_{2}\}.
	\end{align}
	The variant of the $q$-hypergeometric equation of degree three is defined by
	\begin{align}\label{defqhyp3}
		\mathcal{H}_{3}f(x)=0,
	\end{align}
	where
	\begin{align}
		\notag \mathcal{H}_{3}=&\prod_{i=1}^{3}(x-q^{h_{i}+1/2}t_{i})\cdot T_{x}^{-1}+q^{2\alpha+1}\prod_{i=1}^{3}(x-q^{l_{i}-1/2}t_{i})\cdot T_{x}\\
		\notag &+q^{\alpha}\biggl[-(q+1)x^{3}+q^{1/2}\sum_{i=1}^{3}(q^{h_{i}}+q^{l_{i}})t_{i}x^{2}\\
		\notag &-q^{(h_{1}+h_{2}+h_{3}+l_{1}+l_{2}+l_{3}+1)/2}t_{1}t_{2}t_{3}\sum_{i=1}^{3}((q^{-h_{i}}+q^{-l_{i}})/t_{i})x\\
		&+q^{(h_{1}+h_{2}+h_{3}+l_{1}+l_{2}+l_{3})/2}(q+1)t_{1}t_{2}t_{3}\biggr].
	\end{align}
\end{defn}
We can easily check that Figures \ref{conqhyp2} and \ref{conqhyp3} are the configurations of the equations (\ref{defqhyp2}) and (\ref{defqhyp3}), respectively.
Here, $\lambda_{0}=\frac{1}{2}(h_{1}+h_{2}-l_{1}-l_{2}-\alpha_{1}-\alpha_{2}+1)$ and $\nu=\frac{1}{2}(h_{1}+h_{2}+h_{3}-l_{1}-l_{2}-l_{3}+1)$.
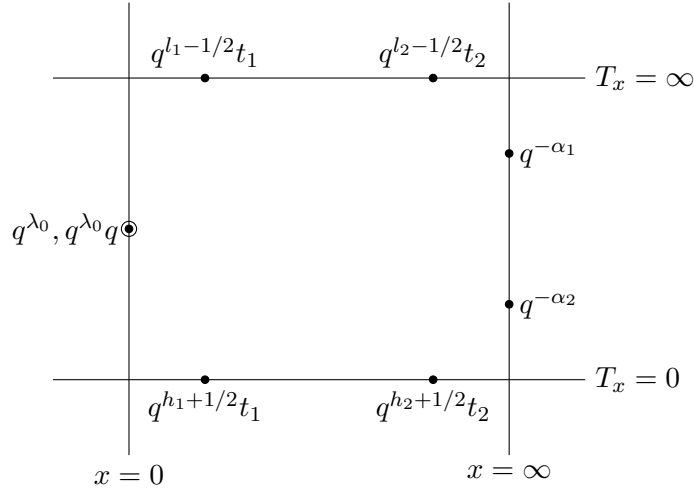
\begin{figure}[H]
	\centering
	\begin{tikzpicture}
		%\draw [help lines] (0,0) grid (10,6);
		\draw (1,0)--(1,6);
		\draw [below] (1,0) node {$x=0$};
		\draw (6,0)--(6,6);
		\draw [below] (6,0) node {$x=\infty$};
		\draw (0,1)--(7,1);
		\draw [right] (7,1) node {$T_{x}=0$};
		\draw (0,5)--(7,5);
		\draw [right] (7,5) node {$T_{x}=\infty$};
		\filldraw (1,3) circle [radius=0.05];
		\draw [left] (1,3) node {$q^{\lambda_{0}},q^{\lambda_{0}}q$};
		\draw (1,3) circle [radius=0.1];
		%\draw [left] (1,4) node {$q^{\lambda_{+}}$};
		\filldraw (6,2) circle [radius=0.05];
		\draw [right] (6,2) node {$q^{-\alpha_{2}}$};
		\filldraw (6,4) circle [radius=0.05];
		\draw [right] (6,4) node {$q^{-\alpha_{1}}$};
		\filldraw (2,1) circle [radius=0.05];
		\draw [below] (2,1) node{$q^{h_{1}+1/2}t_{1}$};
		\filldraw (5,1) circle [radius=0.05];
		\draw [below] (5,1) node{$q^{h_{2}+1/2}t_{2}$};
		\filldraw (2,5) circle [radius=0.05];
		\draw [above] (2,5) node{$q^{l_{1}-1/2}t_{1}$};
		\filldraw (5,5) circle [radius=0.05];
		\draw [above] (5,5) node{$q^{l_{2}-1/2}t_{2}$};
	\end{tikzpicture}
	\caption{the configuration of the equation $\mathcal{H}_{2}f(x)=0$ (\ref{defqhyp2}).}
	\label{conqhyp2}
\end{figure}
\begin{figure}[H]
	\centering
	\begin{tikzpicture}
		%\draw [help lines] (0,0) grid (10,6);
		\draw (1,0)--(1,6);
		\draw [below] (1,0) node {$x=0$};
		\draw (9,0)--(9,6);
		\draw [below] (9,0) node {$x=\infty$};
		\draw (0,1)--(10,1);
		\draw [right] (10,1) node {$T_{x}=0$};
		\draw (0,5)--(10,5);
		\draw [right] (10,5) node {$T_{x}=\infty$};
		\filldraw (1,3) circle [radius=0.05];
		\draw [left] (1,3) node {$q^{\nu-\alpha},q^{\nu-\alpha+1}$};
		\draw (1,3) circle [radius=0.1];
		%\draw [left] (1,3) node {$q^{\mu_{+}}$};
		\filldraw (9,3) circle [radius=0.05];
		\draw (9,3) circle [radius=0.1];
		\draw [right] (9,3) node {$q^{-\alpha}, q^{-\alpha-1}$};
		\filldraw (2,1) circle [radius=0.05];
		\draw [below] (2,1) node{$q^{h_{1}+1/2}t_{1}$};
		\filldraw (5,1) circle [radius=0.05];
		\draw [below] (5,1) node{$q^{h_{2}+1/2}t_{2}$};
		\filldraw (8,1) circle [radius=0.05];
		\draw [below] (8,1) node{$q^{h_{3}+1/2}t_{3}$};
		\filldraw (2,5) circle [radius=0.05];
		\draw [above] (2,5) node{$q^{l_{1}-1/2}t_{1}$};
		\filldraw (5,5) circle [radius=0.05];
		\draw [above] (5,5) node{$q^{l_{2}-1/2}t_{2}$};
		\filldraw (8,5) circle [radius=0.05];
		\draw [above] (8,5) node{$q^{l_{3}-1/2}t_{3}$};
	\end{tikzpicture}
	\caption{the configuration of the equation $\mathcal{H}_{3}f(x)=0$ (\ref{defqhyp3}).}
	\label{conqhyp3}
\end{figure}
These configurations characterize the equations (\ref{defqhyp2}) and (\ref{defqhyp3}).
\begin{prop}\label{propconqhyp}
	We consider the $q$-difference equations
	\begin{align}
		&\mathcal{L}_{2}f(x)=\sum_{i=0}^{2}\sum_{j=-1}^{1}a_{i,j}x^{i}T_{x}^{j}f(x)=0,\\
		&\mathcal{L}_{3}f(x)=\sum_{i=0}^{3}\sum_{j=-1}^{1}a_{i,j}x^{i}T_{x}^{j}f(x)=0.
	\end{align}
	\begin{itemize}
		\item[(1)] If the configuration of $\mathcal{L}_{2}f(x)=0$ is given by Figure \ref{conqhyp2}, then the equation $\mathcal{L}_{2}f(x)=0$ coincides with the variant of the $q$-hypergeometric equation of degree two (\ref{defqhyp2}), that is, $\mathcal{L}_{2}=\alpha\mathcal{H}_{2}$ ($\alpha\in\mathbb{C}$). 
		\item[(2)] If the configuration of $\mathcal{L}_{3}f(x)=0$ is given by Figure \ref{conqhyp3}, then the equation $\mathcal{L}_{3}f(x)=0$ coincides with the variant of the $q$-hypergeometric equation of degree three (\ref{defqhyp3}), that is, $\mathcal{L}_{3}=\alpha\mathcal{H}_{3}$ ($\alpha\in\mathbb{C}$).
	\end{itemize}
\end{prop}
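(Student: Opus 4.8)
The plan is to follow the proof of Proposition~\ref{propconqheun}: assuming the configuration of $\mathcal{L}_3f(x)=0$ is Figure~\ref{conqhyp3}, I show that the twelve coefficients $a_{i,j}$ ($0\le i\le 3$, $-1\le j\le 1$) are forced, up to one common scalar, to coincide with those of $\mathcal{H}_3$. Throughout I use the two factored forms $\mathcal{L}_3=\bigl(P_2(x)T_x^2+P_1(x)T_x+P_0(x)\bigr)T_x^{-1}$, where $P_{j+1}(x)=\sum_{i=0}^3 a_{i,j}x^i$, and $\mathcal{L}_3=\sum_{i=0}^3 x^iL_i(T_x)$, where $L_i(T_x)=a_{i,-1}T_x^{-1}+a_{i,0}+a_{i,1}T_x$. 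Part~(1) is entirely analogous, with the cubics $P_0,P_2$ replaced by quadratics and with a single double point (at $x=0$) in place of two.

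First I would determine the two outer rows. The characteristic roots at $T_x=0$ are the roots of $P_0$, so $P_0(x)=a_{3,-1}\prod_{i=1}^3(x-q^{h_i+1/2}t_i)$, and those at $T_x=\infty$ give $P_2(x)=a_{3,1}\prod_{i=1}^3(x-q^{l_i-1/2}t_i)$; this fixes the entire $T_x^{-1}$ row as a multiple of $a_{3,-1}$ and the entire $T_x^{1}$ row as a multiple of $a_{3,1}$. Next I would exploit the two double points. The roots $q^{-\alpha},q^{-\alpha-1}$ at $x=\infty$ are the roots of $L_3(y)=0$: this determines $a_{3,0}$ and forces $a_{3,-1}/a_{3,1}=q^{-2\alpha-1}$, merging the two scalars into one. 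The roots $q^{\nu-\alpha},q^{\nu-\alpha+1}$ at $x=0$ are the roots of $L_0(y)=0$: this determines $a_{0,0}$ and imposes $a_{0,-1}/a_{0,1}=q^{2\nu-2\alpha+1}$.

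Since $a_{0,-1}$ and $a_{0,1}$ are already known from the outer rows, the last relation is over-determined, and checking that it holds is the step I expect to be the crux. It is satisfied automatically: it is exactly the product relation $a_1\cdots a_N d_1\cdots d_M=b_1\cdots b_N c_1\cdots c_M$ of the remark following Definition~\ref{defconfi} applied to the four families of roots in Figure~\ref{conqhyp3}, and after the $t_i$ factors cancel this identity is equivalent to precisely $\nu=\tfrac12(h_1+h_2+h_3-l_1-l_2-l_3+1)$, the value prescribed in the configuration.

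At this point only the two interior coefficients $a_{1,0},a_{2,0}$ remain. These I would read off from Lemma~\ref{lemnonlog}: essential non-singularity at $x=0$ gives $L_1(q^{\nu-\alpha})=0$, which solves for $a_{1,0}$ because $a_{1,\pm1}$ are known, and essential non-singularity at $x=\infty$ gives $L_2(q^{-\alpha})=0$, which solves for $a_{2,0}$. Hence every $a_{i,j}$ is a fixed multiple of the corresponding coefficient of $\mathcal{H}_3$, so $\mathcal{L}_3=\alpha\mathcal{H}_3$. I would stress the contrast with Proposition~\ref{propconqheun}, where the configuration left the eigenvalue $E$ free: here the two additional non-logarithmic (double-point) conditions consume exactly the interior coefficients $a_{1,0},a_{2,0}$, so no accessory parameter survives and the conclusion is genuine equality up to a scalar. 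For part~(1) the same bookkeeping applies, except that $x=\infty$ carries two distinct simple roots rather than a double point, so its quadratic $L_2$ already fixes $a_{2,0}$ and only the single non-logarithmic condition at $x=0$ is needed, again determining $a_{1,0}$ and leaving no free $E$.
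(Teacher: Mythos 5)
Your proof is correct and takes essentially the same route as the paper: the characteristic roots at $T_{x}=0,\infty$ and at the two double points $x=0,\infty$ fix every coefficient ratio except the interior ones, and Lemma \ref{lemnonlog} (non-logarithmicity at $x=0$ and $x=\infty$) then pins down the remaining $a_{1,0}$, $a_{2,0}$ (only $a_{1,0}$ in part (1)), leaving no accessory parameter. Your bookkeeping is in fact more careful than the paper's own wording: your explicit check that the over-determined relation for $a_{0,-1}/a_{0,1}$ reduces to the prescribed value $\nu=\tfrac12(h_{1}+h_{2}+h_{3}-l_{1}-l_{2}-l_{3}+1)$ is left implicit in the paper, and your identification of $(1,0),(2,0)$ as the coefficients requiring the non-logarithmic conditions corrects an off-by-one slip in the indices the paper's proof lists.
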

\begin{proof}
	We prove (2) only because (1) is proved in the same way.
	We assume that the configuration of $\mathcal{L}_{3}f(x)=0$ is given by Figure \ref{conqhyp3}.
	Similar to the proof of Proposition \ref{propconqheun}, the ratio of $\{a_{i,j}\mid (i,j)\neq (0,0),(1,0)\}$ is determine by the characteristic roots.
	Also, by the non-logarithmic condition at $x=0$ and $x=\infty$, we find the ratios of $a_{0,-1}$ and $a_{0,0}$, $a_{1,-1}$ and $a_{1,0}$, respectively.
	Thus the ratios of $\{a_{i,j}\}$ are determined.
	This completes the proof of (2).
\end{proof}
\begin{rem}
	In \cite{HMST}, the equation $\mathcal{H}_{3}f(x)=0$ is a $q$-analog of the Fuchsian differential equation, which has the following Riemann scheme:
	\begin{align}\label{Riemannscheme3}
		\left\{\begin{array}{ccccc}
			x=0& x=t_{1} & x=t_{2} & x=t_{3} & x=\infty\\
			\nu-\alpha& 0 & 0 & 0 & \alpha\\
			\nu-\alpha+1& l_{1}-h_{1}& l_{2}-h_{2}& l_{3}-h_{3}&\alpha+1
		\end{array}\right\}.
	\end{align}
	Here, $x=0$ and $x=\infty$ are essentially non-singular.
	By some gauge transformation, this equation becomes a Fuchsian differential equation with three singularities $\{t_{1},t_{2},t_{3}\}$.
	In this sense, the variant of the $q$-hypergeometric equation of degree three is a $q$-analog of the Riemann-Papperitz differential equation {\ref{RiePapeq}}.
	Also, the equation $\mathcal{H}_{2}f(x)=0$ is a $q$-analog of the Fuchsian differential equation, which has the following Riemann scheme:
	\begin{align}\label{Riemannscheme2}
		\left\{\begin{array}{cccc}
			x=0& x=t_{1} & x=t_{2} & x=\infty\\
			\lambda_{0}& 0 & 0  & \alpha_{1}\\
			\lambda_{0}+1& l_{1}-h_{1}& l_{2}-h_{2}& \alpha_{2}
		\end{array}\right\},
	\end{align}
	where $x=0$ is essentially non-singular.
\end{rem}
\begin{rem}\label{Remdegeeq}
	Taking the limit $t_{3}\to\infty$, the equation $\mathcal{H}_{3}f(x)=0$ (\ref{defqhyp3}) becomes the equation $\mathcal{H}_{2}f(x)=0$ (\ref{defqhyp2}) with the parameters $(\alpha_{1},\alpha_{2})=(\alpha,\alpha-h_{3}+l_{3})$.
	Also, taking the limit $t_{2}\to0$, the equation (\ref{defqhyp2}) becomes the equation
	\begin{align}
		&\mathcal{H}_{1}f(x)=0,\\
		&\mathcal{H}_{1}=(x-q^{h_{1}+1/2}t_{1})T_{x}^{-1}+q^{\alpha_{1}+\alpha_{2}}(x-q^{l_{1}-1/2}t_{1})T_{x}\notag\\
		&-[(q^{\alpha_{1}+\alpha_{2}})x-q^{(h_{1}+h_{2}+l_{1}+l_{2}+\alpha_{1}+\alpha_{2})/2}(q^{-h_{2}}+q^{-l_{2}})t_{1}].
	\end{align} 
	By setting 
	\begin{align}
		t_{1}=1,\ \ h_{1}=1/2,\ \ h_{2}-l_{2}=\alpha_{1}+\alpha_{2}+l_{1}-3/2,\ \ a=q^{\alpha_{1}},\ \ b=q^{\alpha_{2}},\ \ c=q^{\alpha_{1}+\alpha_{2}+l_{1}-1/2},
	\end{align}
	Heine's $q$-hypergeometric equation (\ref{Heineeq}) is realized.
	For more details, see \cite{HMST}.
	These degenerations can be considered by the configuration as follows:
	\begin{center}
		\begin{tikzpicture}[scale=0.5]
			%\draw [help lines] (0,0) grid (10,6);
			\draw (1,0)--(1,6);
			%\draw [below] (1,0) node {$x=0$};
			\draw (7,0)--(7,6);
			%\draw [below] (9,0) node {$x=\infty$};
			\draw (0,1)--(8,1);
			%\draw [right] (8,1) node {$T_{x}=0$};
			\draw (0,5)--(8,5);
			%\draw [right] (8,5) node {$T_{x}=\infty$};
			\filldraw (1,3) circle [radius=0.15];
			%\draw [left] (1,3) node {$q^{\nu-\alpha},q^{\nu-\alpha+1}$};
			\draw (1,3) circle [radius=0.3];
			%\draw [left] (1,3) node {$q^{\mu_{+}}$};
			\filldraw (7,3) circle [radius=0.15];
			\draw (7,3) circle [radius=0.3];
			%\draw [right] (9,3) node {$q^{-\alpha}, q^{-\alpha-1}$};
			\filldraw (2,1) circle [radius=0.15];
			%\draw [below] (2,1) node{$q^{h_{1}+1/2}t_{1}$};
			\filldraw (4,1) circle [radius=0.15];
			%\draw [below] (5,1) node{$q^{h_{2}+1/2}t_{2}$};
			\filldraw (6,1) circle [radius=0.15];
			\draw [below] (6,1) node {$\ast t_{3}$};
			%\draw [below] (8,1) node{$q^{h_{3}+1/2}t_{3}$};
			\filldraw (2,5) circle [radius=0.15];
			%\draw [above] (2,5) node{$q^{l_{1}-1/2}t_{1}$};
			\filldraw (4,5) circle [radius=0.15];
			%\draw [above] (5,5) node{$q^{l_{2}-1/2}t_{2}$};
			\filldraw (6,5) circle [radius=0.15];
			\draw [above] (6,5) node {$\ast t_{3}$};
			%\draw [above] (8,5) node{$q^{l_{3}-1/2}t_{3}$};
			\draw (9,3) node{$\xrightarrow{t_{3}\to\infty}$};
		\end{tikzpicture}
		\begin{tikzpicture}[scale=0.5]
			%\draw [help lines] (0,0) grid (10,6);
			\draw (1,0)--(1,6);
			%\draw [below] (1,0) node {$x=0$};
			\draw (5,0)--(5,6);
			%\draw [below] (5,0) node {$x=\infty$};
			\draw (0,1)--(6,1);
			%\draw [right] (6,1) node {$T_{x}=0$};
			\draw (0,5)--(6,5);
			%\draw [right] (6,5) node {$T_{x}=\infty$};
			\filldraw (1,3) circle [radius=0.15];
			%\draw [left] (1,3) node {$q^{\lambda_{0}},q^{\lambda_{0}}q$};
			\draw (1,3) circle [radius=0.3];
			%\draw [left] (1,4) node {$q^{\lambda_{+}}$};
			\filldraw (5,2) circle [radius=0.15];
			%\draw [right] (6,2) node {$q^{-\alpha_{2}}$};
			\filldraw (5,4) circle [radius=0.15];
			%\draw [right] (6,4) node {$q^{-\alpha_{1}}$};
			\filldraw (2,1) circle [radius=0.15];
			%\draw [below] (2,1) node{$q^{h_{1}+1/2}t_{1}$};
			\filldraw (4,1) circle [radius=0.15];
			\draw [below] (4,1) node {$\ast t_{2}$};
			%\draw [below] (5,1) node{$q^{h_{2}+1/2}t_{2}$};
			\filldraw (2,5) circle [radius=0.15];
			%\draw [above] (2,5) node{$q^{l_{1}-1/2}t_{1}$};
			\filldraw (4,5) circle [radius=0.15];
			\draw [above] (4,5) node {$\ast t_{2}$};
			%\draw [above] (5,5) node{$q^{l_{2}-1/2}t_{2}$};
			\draw (7,3) node {$\xrightarrow{t_{2}\to0}$};
		\end{tikzpicture}
		\begin{tikzpicture}[scale=0.5]
			%\draw [help lines] (0,0) grid (10,6);
			\draw (1,0)--(1,6);
			%\draw [below] (1,0) node {$x=0$};
			\draw (3,0)--(3,6);
			%\draw [below] (3,0) node {$x=\infty$};
			\draw (0,1)--(4,1);
			%\draw [right] (4,1) node {$T_{x}=0$};
			\draw (0,5)--(4,5);
			%\draw [right] (4,5) node {$T_{x}=\infty$};
			\filldraw (1,2) circle [radius=0.15];
			%\draw [left] (1,3) node {$q^{\lambda_{0}},q^{\lambda_{0}}q$};
			\filldraw (1,4) circle [radius=0.15];
			%\draw [left] (1,4) node {$q^{\lambda_{+}}$};
			\filldraw (3,2) circle [radius=0.15];
			%\draw [right] (6,2) node {$q^{-\alpha_{2}}$};
			\filldraw (3,4) circle [radius=0.15];
			%\draw [right] (6,4) node {$q^{-\alpha_{1}}$};
			\filldraw (2,1) circle [radius=0.15];
			%\draw [below] (2,1) node{$q^{h_{1}+1/2}t_{1}$};
			%\filldraw (4,1) circle [radius=0.15];
			%\draw [below] (5,1) node{$q^{h_{2}+1/2}t_{2}$};
			\filldraw (2,5) circle [radius=0.15];
			%\draw [above] (2,5) node{$q^{l_{1}-1/2}t_{1}$};
			%\filldraw (4,5) circle [radius=0.15];
			%\draw [above] (5,5) node{$q^{l_{2}-1/2}t_{2}$};
		\end{tikzpicture}
	\end{center}
	Here, the right configuration characterize Heine's equation.
	More precisely, the configuration of Heine's equation $[x(1-aT_{x})(1-bT_{x})-(1-T_{x})(1-cq^{-1}T_{x})]f(x)=0$ is as follows:
	\begin{center}
		\begin{tikzpicture}
			%\draw [help lines] (0,0) grid (10,6);
			\draw (1,0)--(1,6);
			\draw [below] (1,0) node {$x=0$};
			\draw (3,0)--(3,6);
			\draw [below] (3,0) node {$x=\infty$};
			\draw (0,1)--(4,1);
			\draw [right] (4,1) node {$T_{x}=0$};
			\draw (0,5)--(4,5);
			\draw [right] (4,5) node {$T_{x}=\infty$};
			\filldraw (1,2) circle [radius=0.05];
			\draw [left] (1,2) node {$q/c$};
			%\draw [left] (1,3) node {$q^{\lambda_{0}},q^{\lambda_{0}}q$};
			\filldraw (1,4) circle [radius=0.05];
			\draw [left] (1,4) node {1};
			%\draw [left] (1,4) node {$q^{\lambda_{+}}$};
			\filldraw (3,2) circle [radius=0.05];
			\draw [right] (3,2) node {$1/b$};
			%\draw [right] (6,2) node {$q^{-\alpha_{2}}$};
			\filldraw (3,4) circle [radius=0.05];
			\draw [right] (3,4) node {$1/a$};
			%\draw [right] (6,4) node {$q^{-\alpha_{1}}$};
			\filldraw (2,1) circle [radius=0.05];
			\draw [below] (2,1) node {1};
			%\draw [below] (2,1) node{$q^{h_{1}+1/2}t_{1}$};
			%\filldraw (4,1) circle [radius=0.15];
			%\draw [below] (5,1) node{$q^{h_{2}+1/2}t_{2}$};
			\filldraw (2,5) circle [radius=0.05];
			\draw [above] (2,5) node {$c/(abq)$};
			%\draw [above] (2,5) node{$q^{l_{1}-1/2}t_{1}$};
			%\filldraw (4,5) circle [radius=0.15];
			%\draw [above] (5,5) node{$q^{l_{2}-1/2}t_{2}$};
		\end{tikzpicture}
	\end{center}
	In differential case, these degenerations mean the limit of Fuchsian differential equation with three singularities $\{t_{1},t_{2},t_{3}\}$, that is, $\{t_{1},t_{2},t_{3}\}\to\{t_{1},t_{2},\infty\}\to\{0,t_{1},\infty\}$.
\end{rem}
\begin{rem}
	In Proposition \ref{propconqhyp} we find that the variants of the $q$-hypergeometric equation (\ref{defqhyp2}), (\ref{defqhyp3}) are rigid by the configuration.
	This is corresponding to the Riemann-Papperitz equation {(\ref{RiePapeq})} being rigid by the Riemann scheme (\ref{RiePapsch}).
\end{rem}
%%%%%%%%%%%%%%%%%%%%%%%%%%%%%%%%%%

\section{Integral solutions}\label{secint}
In this section, we show integral solutions for the variants of the $q$-hypergeometric equation.
First, we derive a $q$-difference equation for the Jackson integral of Jordan-Pochhammer type.
Next, we show integral solutions for the variant of the $q$-hypergeometric equation of degree three $\mathcal{H}_{3}f(x)=0$ by considering a special case of the Jordan-Pochhammer integral. 
Integral solutions for the variant of the $q$-hypergeometric equation of degree two are obtained in a similar way for some limit of the integral solutions for $\mathcal{H}_{3}f(x)=0$.
A linear $q$-difference system associated with the Jackson integral of Jordan-Pochhammer type was obtained in \cite{Ma}, \cite{Mi1989}.
A $q$-difference system for the Jackson integral of Selberg type, which contains the system for the Jordan-Pochhammer type, was obtained in \cite{Mi1994}.
In \cite{It}, a $q$-difference system for the Jackson integral of Selberg type was also discussed, and above results were summarized.
Hence, for more details of the Jackson integral of Jordan-Pochhammer type, see \cite{It,Ma,Mi1989,Mi1994}.
In this paper, we derive a $q$-difference equation for the Jackson integral of Jordan-Pochhammer type by integrating the equation that the integrand satisfies.
The configuration of our equation can be calculated more easily than the configuration of an equation which is derived {from} the above system.
%We consider the Jackson integral of Jordan-Pochhammer type.

\begin{defn}[{\cite{Ja}}]\label{defJac}
	The Jackson integrals of the function $f(t)$ are defined as follows:
	\begin{align}
		&\int_{0}^{\tau} f(t)\frac{d_{q}t}{t}=(1-q)\sum_{n=0}^{\infty}f(\tau q^{n}),\\
		&\int_{0}^{\tau\infty}f(t)\frac{d_{q}t}{t}=(1-q)\sum_{n=-\infty}^{\infty}f(\tau q^{n}),\\
		&\int_{\tau_{1}}^{\tau_{2}}f(t)\frac{d_{q}t}{t}=\int_{0}^{\tau_{2}}f(t)\frac{d_{q}t}{t}-\int_{0}^{\tau_{1}}f(t)\frac{d_{q}t}{t}.
	\end{align}
\end{defn}
\begin{defn}[\cite{Mi1989}]\label{defJP}
	Let $\psi$ be the function
	\begin{align}\label{integrandJP}
		\psi(x,t)=t^{\alpha}\frac{(Axt,a_{1}t,a_{2}t,\ldots,a_{M}t)_{\infty}}{(Bxt,b_{1}t,b_{2}t,\ldots,b_{M}t)_{\infty}}.
	\end{align}
	The Jackson integral of Jordan-Pochhammer type is defined by
	\begin{align}\label{JP}
		\varphi(x,\tau)=\int_{0}^{\tau\infty}\psi(x,t)\frac{d_{q}t}{t}.
	\end{align}
\end{defn}
Lemma \ref{lemeqpsi} plays an essential role in deriving the $q$-difference equation which the Jackson integral of Jordan-Pochhammer type satisfies.
\begin{lemm}\label{lemeqpsi}
	The integrand (\ref{integrandJP}) satisfies the following equation
	\begin{align}\label{eqpsi}
		\sum_{k=0}^{M}&(-1)^{k}x^{M-k}(e_{k}(a)T_{t}T_{x}^{-1}-q^{\alpha}e_{k}(b))\notag\\
		&\times(B-AT_{x})\cdots(B-Aq^{M-k-1}T_{x})(1-q^{-(k-1)}T_{x})\cdots(1-T_{x})\psi=0.
	\end{align}
	{Here, $e_{i}$ is the elementary symmetric polynomial of degree $i$.}
\end{lemm}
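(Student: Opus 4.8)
The plan is to collapse the $M$-term identity \eqref{eqpsi} onto a single first-order $q$-difference relation for $\psi$, by systematically trading each power of $t$ for $q$-shift operators in $x$. First I would record the elementary shift relations for $\psi$. From $(aqt)_\infty=(at)_\infty/(1-at)$ and $T_tt^\alpha=q^\alpha t^\alpha$, a direct computation gives
\[
\frac{T_x\psi}{\psi}=\frac{1-Bxt}{1-Axt},\qquad
\frac{T_tT_x^{-1}\psi}{\psi}=q^\alpha\prod_{i=1}^M\frac{1-b_it}{1-a_it}.
\]
Clearing denominators in the second relation yields the first-order relation
\[
\prod_{i=1}^M(1-a_it)\,T_tT_x^{-1}\psi=q^\alpha\prod_{i=1}^M(1-b_it)\,\psi,
\]
which carries all the arithmetic content of the lemma once it is expanded through $\prod_i(1-c_it)=\sum_k(-1)^ke_k(c)t^k$.

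Next I would establish a conversion identity turning multiplication by $t$ into $x$-shifts. The first relation rewrites as the base case $(1-T_x)g=xt(B-AT_x)g$, valid for any $g$ with $(1-Axt)T_xg=(1-Bxt)g$. Iterating along the tower $g^{(j)}:=g/\prod_{i<j}(1-Aq^ixt)$, on which $(B-Aq^jT_x)$ acts by the clean cancellation $(B-Aq^jT_x)g^{(j)}=(B-Aq^j)g^{(j+1)}$, I would prove by induction
\[
\prod_{j=0}^{k-1}(B-Aq^jT_x)g=\Big(\prod_{j=0}^{k-1}(B-Aq^j)\Big)\frac{g}{\prod_{j=0}^{k-1}(1-Aq^jxt)},
\]
\[
\prod_{j=0}^{k-1}(1-q^{-j}T_x)g=\Big(\prod_{j=0}^{k-1}(B-Aq^j)\Big)\frac{(xt)^k\,g}{\prod_{j=0}^{k-1}(1-Aq^jxt)}.
\]
Dividing the two gives the conversion identity
\[
x^kt^k\prod_{j=0}^{k-1}(B-Aq^jT_x)\,g=\prod_{j=0}^{k-1}(1-q^{-j}T_x)\,g,
\]
which I will call $(\star_k)$. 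The crucial point is that $(\star_k)$ depends only on the ratio $T_xg/g=(1-Bxt)/(1-Axt)$; since $T_x(T_tT_x^{-1}\psi)=T_t\psi$ forces $\tilde\psi:=T_tT_x^{-1}\psi$ to share this ratio, $(\star_k)$ applies verbatim to both $\psi$ and $\tilde\psi$.

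Finally I would assemble. Writing the left side of \eqref{eqpsi} as $S_a-q^\alpha S_b$, where $S_a$ collects the $e_k(a)T_tT_x^{-1}$ terms and $S_b$ the $e_k(b)$ terms, I substitute $(\star_k)$ for the inner $\prod_{j=0}^{k-1}(1-q^{-j}T_x)$ acting on $\psi$ (respectively on $\tilde\psi$, after commuting $T_tT_x^{-1}$ inward past the constant-coefficient factors). Pushing the resulting $x^k$ leftward through $\prod_{j=0}^{M-k-1}(B-Aq^jT_x)$ via $T_xx^k=q^kx^kT_x$ shifts the index range so that the two operator products merge into the single product $\prod_{j=0}^{M-1}(B-Aq^jT_x)$, while the $t^k$ reassemble into $\prod_i(1-a_it)$ and $\prod_i(1-b_it)$. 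This yields $S_b=x^M\prod_i(1-b_it)\prod_{j=0}^{M-1}(B-Aq^jT_x)\psi$ and $S_a=x^M\prod_{j=0}^{M-1}(B-Aq^jT_x)\prod_i(1-a_it)\tilde\psi$; invoking the first-order relation to replace $\prod_i(1-a_it)\tilde\psi$ by $q^\alpha\prod_i(1-b_it)\psi$ gives $S_a=q^\alpha S_b$, whence $S_a-q^\alpha S_b=0$.

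I expect the main obstacle to be the telescoping conversion $(\star_k)$ together with the attendant non-commutative bookkeeping: one must keep the $q$-powers exactly right both in the inductive cancellations that define the tower $g^{(j)}$ and when commuting $x^k$ past the shift operators, where $T_xx^k=q^kx^kT_x$ is precisely what realigns $\prod_{j=0}^{M-k-1}(B-Aq^jT_x)$ with $\prod_{j=0}^{k-1}(B-Aq^jT_x)$ into one length-$M$ product. Recognizing that $(\star_k)$ is ratio-dependent only, so that it holds for $\tilde\psi=T_tT_x^{-1}\psi$ as well, is the observation that makes $S_a$ and $S_b$ cancel against each other.
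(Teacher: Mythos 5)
Your proposal is correct and takes essentially the same route as the paper: your two recorded shift relations are the paper's (\ref{preqpsi1}) and (\ref{preqpsi2}), your conversion identity $(\star_k)$ is exactly the paper's combination of $[(xt)(B-AT_{x})]^{k}=(xt)^{k}(B-AT_{x})\cdots(B-Aq^{k-1}T_{x})$ with $[(xt)(B-AT_{x})]^{k}\psi=(1-q^{-(k-1)}T_{x})\cdots(1-T_{x})\psi$, and your assembly of $S_{a}-q^{\alpha}S_{b}$ is the paper's left-multiplication of the expanded relation (\ref{preqpsi3}) by $x^{M}(B-AT_{x})\cdots(B-Aq^{M-1}T_{x})$, read in the reverse direction. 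The only organizational differences are that you prove the conversion identity by an explicit telescoping tower rather than by iterating (\ref{preqpsi1}), and that you apply it to $\tilde{\psi}=T_{t}T_{x}^{-1}\psi$ via ratio-invariance instead of first commuting $T_{t}T_{x}^{-1}$ out to the left as the paper does.
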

\begin{proof}
	The equations
	\begin{align}
		\label{preqpsi1}&xt(B-AT_{x})\psi=(1-T_{x})\psi,\\
		\label{preqpsi2}&\prod_{i=1}^{M}(1-a_{i}t)\cdot T_{t}T_{x}^{-1}\psi=q^{\alpha}\prod_{i=1}^{M}(1-b_{i}t)\cdot \psi,
	\end{align}
	can be verified by direct calculations.
	Using the elementary symmetric functions, the equation (\ref{preqpsi2}) can be rewitten as
	\begin{align}\label{preqpsi3}
		\sum_{k=0}^{M}(-t)^{k}[e_{k}(a)T_{t}T_{x}^{-1}-q^{\alpha}e_{k}(b)]\psi=0.
	\end{align}
	Since $[(xt)(B-AT_{x})]^{k}=(xt)^{k}(B-AT_{x})(B-AqT_{x})\cdots(B-Aq^{k-1}T_{x})$ holds, we have
	\begin{align}
		\notag&x^{M}(B-AT_{x})(B-AqT_{x})\cdots(B-Aq^{M-1}T_{x})t^{k}(T_{t}T_{x}^{-1})^{\varepsilon}\psi\\
		\notag=&x^{M-k}(xt)^{k}(T_{t}T_{x}^{-1})^{\varepsilon}(B-Aq^{k}T_{x})\cdots(B-Aq^{M-1}T_{x})(B-AT_{x})\cdots(B-Aq^{k-1}T_{x})\psi\\
		\notag=&x^{M-k}(T_{t}T_{x}^{-1})^{\varepsilon}(B-AT_{x})\cdots(B-Aq^{M-1-k}T_{x})(xt)^{k}(B-AT_{x})\cdots(B-Aq^{k-1}T_{x})\psi\\
		=&x^{M-k}(T_{t}T_{x}^{-1})^{\varepsilon}(B-AT_{x})\cdots(B-Aq^{M-1-k}T_{x})[xt(B-AT_{x})]^{k}\psi,
	\end{align}
	for $\varepsilon=0,1$. By using the equation (\ref{preqpsi1}), we get
	\begin{align}
		[(xt)(B-AT_{x})]^{k}\psi=(1-q^{-(k-1)}T_{x})\cdots(1-T_{x})\psi.
	\end{align}
	Therefore, multiplying the equation (\ref{preqpsi3}) by $x^{M}(B-AT_{x})(B-AqT_{x})\cdots(B-Aq^{M-1}T_{x})$ yields the desired equation (\ref{eqpsi}).
\end{proof}
If the integral path (the period of integration) is chosen appropriately, then we can derive the $q$-difference equation that the Jackson integral of Jordan-Pochhammer type satisfies.
\begin{prop}\label{propeqJP}
	Assuming $T_{x}\tau=q^{l}\tau$ for $l\in\mathbb{Z}$, the integral (\ref{JP}) satisfies the $q$-difference equation
	\begin{align}\label{eqJP}
		\sum_{k=0}^{M}\bigl[&(-1)^{k}x^{M-k}(e_{k}(a)T_{x}^{-1}-q^{\alpha}e_{k}(b))(B-AT_{x})\cdots(B-Aq^{M-k-1}T_{x})\notag\\
		&\times(1-q^{-(k-1)}T_{x})\cdots(1-T_{x})\bigr]\varphi=0.
	\end{align}
\end{prop}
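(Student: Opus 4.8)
The plan is to derive the equation (\ref{eqJP}) for $\varphi$ directly by integrating the equation (\ref{eqpsi}) for the integrand $\psi$ supplied by Lemma \ref{lemeqpsi}, over the bi-infinite $q$-cycle $\int_{0}^{\tau\infty}\cdots\frac{d_{q}t}{t}$. The only discrepancy between (\ref{eqpsi}) and (\ref{eqJP}) is that the operator $T_{t}T_{x}^{-1}$ in the former is replaced by $T_{x}^{-1}$ in the latter; the powers $x^{M-k}$, the symmetric functions $e_{k}(a),e_{k}(b)$, and the $T_{x}$-polynomial factors $(B-AT_{x})\cdots(1-T_{x})$ are all identical. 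Thus the whole content of the argument is to show that under this bi-infinite Jackson integral the shift $T_{t}$ acts as the identity, while the shifts $T_{x}^{\pm1}$ and the multiplications by $x^{i}$ pass through the integral sign unchanged.

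First I would record the two invariance properties of the cycle in Definition \ref{defJac}. For any $g$, reindexing the bi-infinite sum $(1-q)\sum_{n\in\mathbb{Z}}g(\tau q^{n})$ by $n\mapsto n+1$ gives $\int_{0}^{\tau\infty}(T_{t}g)\frac{d_{q}t}{t}=\int_{0}^{\tau\infty}g\,\frac{d_{q}t}{t}$; in particular $\int_{0}^{\tau\infty}(T_{t}\psi)\frac{d_{q}t}{t}=\varphi$. This is the $q$-analog of the vanishing of the integral of a total difference over a closed cycle, and it is precisely why the bi-infinite path $\int_{0}^{\tau\infty}$ is used rather than the one-sided $\int_{0}^{\tau}$. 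Second, the hypothesis $T_{x}\tau=q^{l}\tau$ ensures that the shift $x\mapsto qx$ merely rescales the base point $\tau$ by $q^{l}$ and hence leaves the $q$-lattice $\{\tau q^{n}:n\in\mathbb{Z}\}$ invariant; consequently $T_{x}\varphi=\int_{0}^{\tau\infty}(T_{x}\psi)\frac{d_{q}t}{t}$, and likewise for $T_{x}^{-1}$.

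Next I would apply $\int_{0}^{\tau\infty}\cdots\frac{d_{q}t}{t}$ to (\ref{eqpsi}) summand by summand. Since $T_{t}$ commutes with $x$ and $T_{x}$, in each term of the first family I can move $T_{t}$ to the right so that it acts directly on $\psi$, leaving $e_{k}(a)\,x^{M-k}\,T_{x}^{-1}(B-AT_{x})\cdots(1-T_{x})(T_{t}\psi)$. The factor $x^{M-k}$ carries no $t$ and pulls out of the integral; the operator $T_{x}^{-1}(B-AT_{x})\cdots(1-T_{x})$ is a polynomial in $T_{x}^{\pm1}$ with constant coefficients, so by the lattice invariance in $x$ it commutes with the integral; and the innermost $T_{t}\psi$ integrates to $\varphi$ by the first property. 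The $q^{\alpha}e_{k}(b)$-terms are treated identically but without any $T_{t}$. Collecting the results replaces $\psi$ by $\varphi$ and $T_{t}T_{x}^{-1}$ by $T_{x}^{-1}$ in every summand, which is exactly (\ref{eqJP}).

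The main obstacle is justifying that these two lattice-invariance manipulations are legitimate, i.e.\ that the interchange of the operators with the bi-infinite summation is valid. This rests entirely on the $q$-invariance of the cycle $\{\tau q^{n}\}$ under both $t\mapsto qt$ and $x\mapsto qx$ (the latter guaranteed by $T_{x}\tau=q^{l}\tau$), together with absolute convergence of the bi-infinite series defining $\psi(x,t)$ for a suitable range of the parameters $\alpha,A,B,a_{i},b_{i}$. Granting convergence, the term-by-term passage from (\ref{eqpsi}) to (\ref{eqJP}) is immediate, and this is the step I would take the most care to state cleanly.
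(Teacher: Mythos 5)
Your proposal is correct and takes essentially the same route as the paper: the paper's proof is exactly the single computation $\int_{0}^{\tau\infty}(T_{t}^{i}T_{x}^{j}\psi)\,\frac{d_{q}t}{t}=T_{x}^{j}\int_{0}^{\tau\infty}\psi\,\frac{d_{q}t}{t}$, carried out by the same two reindexings of the bi-infinite sum that you isolate as your two invariance properties (one absorbing $T_{t}$, one using $T_{x}\tau=q^{l}\tau$ to pull $T_{x}^{j}$ through), followed by term-by-term integration of (\ref{eqpsi}) to get (\ref{eqJP}). Your additional remarks on convergence and on why the bi-infinite cycle $\int_{0}^{\tau\infty}$ is needed are points the paper leaves implicit.
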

\begin{proof}
	Since $T_{x}\tau=q^{l}\tau$, we get
	\begin{align}
		\int_{0}^{\tau\infty}(T_{t}^{i}T_{x}^{j}\psi(x,t))\frac{d_{q}t}{t}&=(1-q)\sum_{n\in\mathbb{Z}}\psi(T_{x}^{i}x,q^{n+j}\tau)\notag\\
		&=(1-q)\sum_{n\in\mathbb{Z}}\psi(T_{x}^{i}x,q^{n+j-il}T_{x}^{i}\tau)\notag\\
		&=(1-q)\sum_{n\in\mathbb{Z}}\psi(T_{x}^{i}x,q^{n}T_{x}^{i}\tau)\notag\\
		&=(1-q)\sum_{n\in\mathbb{Z}}T_{x}^{i}\psi(x,q^{n}\tau)\notag\\
		&=T_{x}^{i}\int_{0}^{\tau\infty}\psi(x,t)\frac{d_{q}t}{t}.
	\end{align}
	Therefore we have the desired equation (\ref{eqJP}) by integrating the equation (\ref{eqpsi}).
\end{proof}
\begin{rem}\label{remM1case}
	We put $M=1$, then the equation (\ref{eqJP}) is Heine's $q$-hypergeometric equation.
	With the changes of the variable and parameters
	\begin{align}\label{changeM1}
		z=\frac{qBx}{a_{1}},\ \ a=q^{\alpha},\ \ b=\frac{A}{B},\ \ c=q^{\alpha+1}\frac{b_{1}}{a_{1}},
	\end{align}
	we find
	\begin{align}
		&[x(T_{x}^{-1}-q^{\alpha})(B-AT_{x})-(a_{1}T_{x}^{-1}-q^{\alpha}b_{1})(1-T_{x})]\notag\\
		&=T_{z}^{-1}[z(1-aT_{z})(1-bT_{z})-(1-cq^{-1}T_{z})(1-T_{z})].
	\end{align}
	Therefore we have integral solutions $\displaystyle\int_{0}^{\tau\infty}t^{\alpha}\frac{(Axt,a_{1}t)_{\infty}}{(Bxt,b_{1}t)_{\infty}}\frac{d_{q}t}{t}$ for Heine's equation.
\end{rem}
We will reduce the special case of (\ref{eqJP}) to the variant of the $q$-hypergeometric equation of degree three, which yields the integral solutions.
We put $M=3$, $\alpha=1$ and $Aa_{1}a_{2}a_{3}=q^{2}Bb_{1}b_{2}b_{3}$.
Then the equation (\ref{eqJP}) can be written as
\begin{align}\label{eqJP3}
	[&x^{3}(1-qT_{x})(B-AT_{x})(B-AqT_{x})(B-Aq^{2}T_{x})\notag\\
	-&x^{2}(e_{1}(a)-qe_{1}(b)T_{x})(B-AT_{x})(B-AqT_{x})(1-T_{x})\notag\\
	+&x(e_{2}(a)-qe_{2}(b)T_{x})(B-AT_{x})(1-q^{-1}T_{x})(1-T_{x})\notag\\
	-&e_{3}(a)B^{-1}(B-q^{-1}AT_{x})(1-q^{-2}T_{x})(1-q^{-1}T_{x})(1-T_{x})]T_{x}^{-1}\varphi=0,
\end{align}
so that the equation becomes
\begin{align}
	(1-q^{-2}T_{x})(B-Aq^{-1}T_{x})\mathcal{E}_{3}\varphi=0,
\end{align}
where $\mathcal{E}_{3}$ is the $q$-difference operator
\begin{align}\label{defE3}
	\mathcal{E}_{3}=&[x^{3}(B-AT_{x})(B-AqT_{x})-x^{2}(e_{1}(a)-qe_{1}(b)T_{x})(B-AT_{x})\notag\\
	&+x(e_{2}(a)-qe_{2}(b)T_{x})(1-T_{x})-e_{3}(a)B^{-1}(1-q^{-1}T_{x})(1-T_{x})]T_{x}^{-1}.
\end{align}
The configuration of (\ref{defE3}) is given as follows.
\begin{figure}[H]
	\centering
	\begin{tikzpicture}
		%\draw [help lines] (0,0) grid (10,6);
		\draw (1,0)--(1,6);
		\draw [below] (1,0) node {$x=0$};
		\draw (9,0)--(9,6);
		\draw [below] (9,0) node {$x=\infty$};
		\draw (0,1)--(10,1);
		\draw [right] (10,1) node {$T_{x}=0$};
		\draw (0,5)--(10,5);
		\draw [right] (10,5) node {$T_{x}=\infty$};
		\filldraw (1,3) circle [radius=0.05];
		\draw [left] (1,3) node {$1,q$};
		\draw (1,3) circle [radius=0.1];
		%\draw [left] (1,3) node {$q^{\mu_{+}}$};
		\filldraw (9,3) circle [radius=0.05];
		\draw (9,3) circle [radius=0.1];
		\draw [right] (9,3) node {$B/A,Bq^{-1}/A$};
		\filldraw (2,1) circle [radius=0.05];
		\draw [below] (2,1) node{$a_{1}/B$};
		\filldraw (5,1) circle [radius=0.05];
		\draw [below] (5,1) node{$a_{2}/B$};
		\filldraw (8,1) circle [radius=0.05];
		\draw [below] (8,1) node{$a_{3}/B$};
		\filldraw (2,5) circle [radius=0.05];
		\draw [above] (2,5) node{$b_{1}/A$};
		\filldraw (5,5) circle [radius=0.05];
		\draw [above] (5,5) node{$b_{2}/A$};
		\filldraw (8,5) circle [radius=0.05];
		\draw [above] (8,5) node{$b_{3}/A$};
	\end{tikzpicture}
	\caption{the configuration of the equation (\ref{defE3}).}
	\label{conE3}
\end{figure}
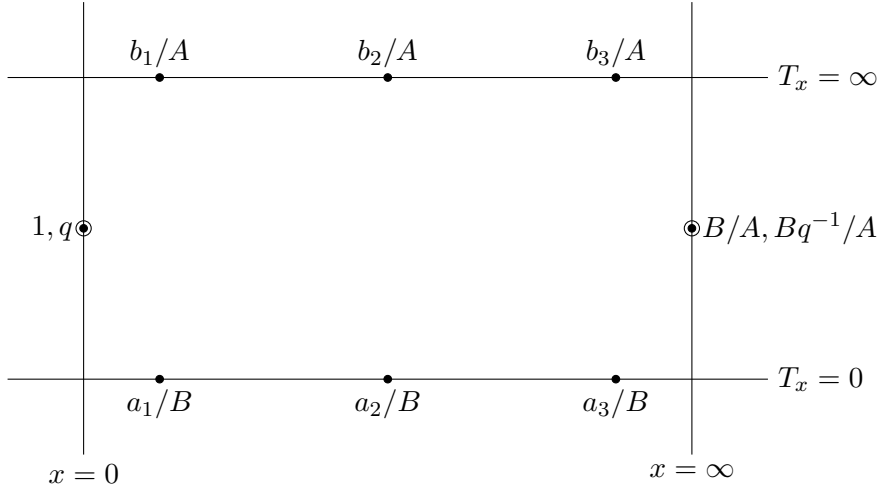
Therefore we identify $\mathcal{E}_{3}f(x)=0$ with the variant of the $q$-hypergeometric equation of degree three {$\mathcal{H}_{3}f(x)=0$} (\ref{defqhyp3}).
{For more details, see the following remark.}
\begin{rem}\label{remE3toH3}
	The change of the dependent variable and parameters, which transforms $\mathcal{E}_{3}f(x)=0$ into $\mathcal{H}_{3}g(x)=0$, is given as follows:
	\begin{align}\label{changepara}
		\notag&g(x)=x^{\nu-\alpha}f(x),\ \ q^{-\nu}=B/A,\ \ q^{l_{i}-1/2}t_{i}=b_{i}/A,\ \ q^{h_{i}+1/2}t_{i}=a_{i}/B, \\
		&\nu=\frac{1}{2}(h_{1}+h_{2}+h_{3}-l_{1}-l_{2}-l_{3}+1)
	\end{align}
\end{rem}
The general solution of $(1-q^{-2}T_{x})(B-Aq^{-1}T_{x})f(x)=0$ is $C_{1}x^{2}+C_{2}x^{\lambda+1}$, where $C_{1}$, $C_{2}$ are pseudo-constants and $B/A=q^{\lambda}$.
If a pseudo-constant $C$ is holomorphic at $x=0$ or $x=\infty$, then $C$ is a constant.
Thus we have 
\begin{align}\label{C1C2}
	\mathcal{E}_{3}\varphi(x,\tau)=C_{1}x^{2}+C_{2}x^{\lambda+1},
\end{align} 
where $C_{1}$, $C_{2}$ are constants.
Therefore we should calculate these constants $C_{1}$, $C_{2}$.
Lemma \ref{lem3psi3} is useful for calculating them.
\begin{lemm}\label{lem3psi3}
	We have
	\begin{align}\label{lim3psi3}
		\lim_{z\to1}(1-z){}_{3}\psi_{3}\left(\begin{array}{c}
			a_{1},a_{2},a_{3}\\
			b_{1},b_{2},b_{3}
		\end{array};z\right)=\frac{(a_{1},a_{2},a_{3})_{\infty}}{(b_{1},b_{2},b_{3})_{\infty}}.
	\end{align}
	Here, ${}_{3}\psi_{3}$ is the bilateral $q$-hypergeometric function
	\begin{align}
		{}_{3}\psi_{3}\left(\begin{array}{c}
			a_{1},a_{2},a_{3}\\
			b_{1},b_{2},b_{3}
		\end{array};z\right)=\sum_{n\in\mathbb{Z}}\frac{(a_{1},a_{2},a_{3})_{n}}{(b_{1},b_{2},b_{3})_{n}}z^{n}.
	\end{align}
\end{lemm}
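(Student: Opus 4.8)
The plan is to read off the constant $\dfrac{(a_1,a_2,a_3)_\infty}{(b_1,b_2,b_3)_\infty}$ as the large-$n$ limit of the summand, and then to show that it is precisely the residue producing the simple pole of ${}_3\psi_3$ at $z=1$. Write $c_n=\dfrac{(a_1,a_2,a_3)_n}{(b_1,b_2,b_3)_n}$, so the series is $\sum_{n\in\mathbb Z}c_n z^n$. Since $(a)_n\to(a)_\infty$ as $n\to+\infty$, one has $c_n\to L:=\dfrac{(a_1,a_2,a_3)_\infty}{(b_1,b_2,b_3)_\infty}$; thus the positive tail behaves like $L\sum z^n$, which is the source of the pole at $z=1$, and the factor $(1-z)$ in the statement is exactly what cancels it.

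First I would record the two-sided asymptotics of $c_n$. For $n\to+\infty$ this is immediate: $c_n\to L$, and more quantitatively $c_n-c_{n-1}=O(q^{n})$ because $c_n/c_{n-1}=\prod_i\frac{1-a_iq^{n-1}}{1-b_iq^{n-1}}=1+O(q^n)$. For $n\to-\infty$ I would use the reflection formula for the $q$-Pochhammer symbol at negative index, $(a)_{-m}=\frac{(-1/a)^m q^{m(m+1)/2}}{(q/a)_m}$; after cancelling the common factors $(-1)^{3m}q^{3m(m+1)/2}$ between numerator and denominator this collapses to the clean formula
\begin{align}
c_{-m}=\left(\frac{b_1b_2b_3}{a_1a_2a_3}\right)^{m}\frac{(q/b_1,q/b_2,q/b_3)_m}{(q/a_1,q/a_2,q/a_3)_m}.
\end{align}
In particular $c_{-m}\sim\mathrm{const}\cdot\kappa^m$ with $\kappa=\frac{b_1b_2b_3}{a_1a_2a_3}$, so $c_{-m}\to0$ geometrically; this is exactly the condition $|\kappa|<1$ under which the bilateral series converges on the annulus $|\kappa|<|z|<1$, with $z=1$ on its outer boundary.

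The key step is an Abel-type telescoping. On the annulus of convergence both $\sum c_n z^n$ and $z\sum c_n z^n=\sum c_{n-1}z^n$ converge, so
\begin{align}
(1-z)\,{}_3\psi_3\!\left(\begin{array}{c}a_1,a_2,a_3\\b_1,b_2,b_3\end{array};z\right)=\sum_{n\in\mathbb Z}(c_n-c_{n-1})z^n.
\end{align}
By the asymptotics above $|c_n-c_{n-1}|=O(|q|^{n})$ as $n\to+\infty$ and $=O(|\kappa|^{|n|})$ as $n\to-\infty$, so the right-hand side converges on the annulus $|\kappa|<|z|<|q|^{-1}$, which contains $z=1$ in its interior; hence it defines a function analytic at $z=1$, and by continuity the desired limit equals its value there. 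Telescoping the partial sums, $\sum_{n=-N}^{M}(c_n-c_{n-1})=c_M-c_{-N-1}\to L-0$, which is the claim.

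The main obstacle is the boundary behaviour: $z=1$ sits on the edge of the annulus of convergence of ${}_3\psi_3$ itself, so one cannot simply pass the limit inside that (divergent) series and must instead justify the interchange. The telescoping is what resolves this, converting $\sum c_n z^n$ into the absolutely convergent series $\sum(c_n-c_{n-1})z^n$, whose domain of convergence genuinely contains a neighbourhood of $z=1$. The remaining work is the elementary but essential asymptotic estimate of $c_n$ as $n\to\pm\infty$, for which the negative-index reflection formula is the crucial input.
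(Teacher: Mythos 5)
Your proposal is correct, and it reaches the result by a genuinely different route than the paper. The paper first throws away the negative-power part (holomorphic at $z=1$, hence annihilated by the factor $1-z$), then rewrites the positive-part coefficient as $\frac{(a_{1},a_{2},a_{3})_{\infty}}{(b_{1},b_{2},b_{3})_{\infty}}\prod_{i}\frac{(b_{i}q^{n})_{\infty}}{(a_{i}q^{n})_{\infty}}$, expands each factor by the $q$-binomial theorem into a triple sum, interchanges the summations so the sum over $n$ becomes geometric, and obtains a partial-fraction-type expansion $\sum_{k_{1},k_{2},k_{3}}(\cdots)\,\frac{1}{1-zq^{k_{1}+k_{2}+k_{3}}}$ in which the unique term singular at $z=1$ is $k_{1}=k_{2}=k_{3}=0$, with coefficient exactly the claimed limit. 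You instead telescope: $(1-z)\,{}_{3}\psi_{3}=\sum_{n\in\mathbb{Z}}(c_{n}-c_{n-1})z^{n}$, show via the two-sided estimates $c_{n}-c_{n-1}=O(|q|^{n})$ and $c_{-m}=O(|\kappa|^{m})$ (with $\kappa=b_{1}b_{2}b_{3}/(a_{1}a_{2}a_{3})$) that this series is analytic on the strictly larger annulus $|\kappa|<|z|<|q|^{-1}$ containing $z=1$, and evaluate at $z=1$ by telescoping to $\lim c_{M}-\lim c_{-N-1}=L-0$. Your argument is more elementary (no $q$-binomial theorem, no interchange of a fourfold infinite sum), treats both tails symmetrically, makes explicit the convergence condition $|b_{1}b_{2}b_{3}|<|a_{1}a_{2}a_{3}|$ that the paper leaves implicit, and generalizes verbatim to ${}_{r}\psi_{r}$; the paper's computation, in exchange, produces finer information, namely the full pole structure of the positive part at the points $z=q^{-K}$, $K\geq0$, though it tacitly needs $|a_{i}|<1$ for the binomial expansions to converge (removable by analytic continuation in the parameters).
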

\begin{proof}
	The negative power part of the function ${}_{3}\psi_{3}$ is holomorphic at $z=1$, thus we have
	\begin{align}
		\lim_{z\to1}(1-z){}_{3}\psi_{3}\left(\begin{array}{c}
			a_{1},a_{2},a_{3}\\
			b_{1},b_{2},b_{3}
		\end{array};z\right)=\lim_{z\to1}(1-z)\sum_{n=0}^{\infty}\frac{(a_{1},a_{2},a_{3})_{n}}{(b_{1},b_{2},b_{3})_{n}}z^{n}.
	\end{align}
	Using the $q$-binomial theorem, we obtain
	\begin{align}
		\notag&\sum_{n=0}^{\infty}\frac{(a_{1},a_{2},a_{3})_{n}}{(b_{1},b_{2},b_{3})_{n}}z^{n}\\
		\notag=&\frac{(a_{1},a_{2},a_{3})_{\infty}}{(b_{1},b_{2},b_{3})_{\infty}}\sum_{n=0}^{\infty}\frac{(b_{1}q^{n},b_{2}q^{n},b_{3}q^{n})_{\infty}}{(a_{1}q^{n},a_{2}q^{n},a_{3}q^{n})_{\infty}}z^{n}\\
		\notag=&\frac{(a_{1},a_{2},a_{3})_{\infty}}{(b_{1},b_{2},b_{3})_{\infty}}\sum_{n=0}^{\infty}\sum_{k_{1}=0}^{\infty}\sum_{k_{2}=0}^{\infty}\sum_{k_{3}=0}^{\infty}\frac{(b_{1}/a_{1})_{k_{1}}(b_{2}/a_{2})_{k_{2}}(b_{3}/a_{3})_{k_{3}}}{(q)_{k_{1}}(q)_{k_{2}}(q)_{k_{3}}}a_{1}^{k_{1}}a_{2}^{k_{2}}a_{3}^{k_{3}}(zq^{k_{1}+k_{2}+k_{3}})^{n}\\
		\notag=&\frac{(a_{1},a_{2},a_{3})_{\infty}}{(b_{1},b_{2},b_{3})_{\infty}}\sum_{k_{1}=0}^{\infty}\sum_{k_{2}=0}^{\infty}\sum_{k_{3}=0}^{\infty}\frac{(b_{1}/a_{1})_{k_{1}}(b_{2}/a_{2})_{k_{2}}(b_{3}/a_{3})_{k_{3}}}{(q)_{k_{1}}(q)_{k_{2}}(q)_{k_{3}}}a_{1}^{k_{1}}a_{2}^{k_{2}}a_{3}^{k_{3}}\frac{1}{1-zq^{k_{1}+k_{2}+k_{3}}}\\
		=&\frac{(a_{1},a_{2},a_{3})_{\infty}}{(b_{1},b_{2},b_{3})_{\infty}}\left(\frac{1}{1-z}+\sum_{k_{1}+k_{2}+k_{3}>0}\frac{(b_{1}/a_{1})_{k_{1}}(b_{2}/a_{2})_{k_{2}}(b_{3}/a_{3})_{k_{3}}}{(q)_{k_{1}}(q)_{k_{2}}(q)_{k_{3}}}a_{1}^{k_{1}}a_{2}^{k_{2}}a_{3}^{k_{3}}\frac{1}{1-zq^{k_{1}+k_{2}+k_{3}}}\right).
	\end{align}
	Therefore we get the desired equation (\ref{lim3psi3}).
\end{proof}
\begin{lemm}\label{lemintcalcu}
	Suppose $\tau\in\{q/a_{1},q/a_{2},q/a_{3},q/(Ax)\}$ and $B/A\notin q^{\mathbb{Z}}$.
	Then the Jackson integral of Jordan-Pochhammer type (\ref{JP}) satisfies the equation $\mathcal{E}_{3}\varphi=(1-q)q(A-B)x^{2}$.
\end{lemm}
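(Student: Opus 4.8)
The plan is to determine the two constants in $\mathcal{E}_{3}\varphi=C_{1}x^{2}+C_{2}x^{\lambda+1}$ from (\ref{C1C2}). First I would verify that Proposition \ref{propeqJP} applies at each admissible endpoint: if $\tau\in\{q/a_{1},q/a_{2},q/a_{3}\}$ then $T_{x}\tau=\tau$, and if $\tau=q/(Ax)$ then $T_{x}\tau=q^{-1}\tau$, so $T_{x}\tau=q^{l}\tau$ with $l\in\mathbb{Z}$ in every case. Hence $\varphi$ satisfies (\ref{eqJP3}), i.e. $(1-q^{-2}T_{x})(B-Aq^{-1}T_{x})\mathcal{E}_{3}\varphi=0$; since the pseudo-constants $C_{1},C_{2}$ are holomorphic they are genuine constants, and only their values remain to be found.

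Next I would show $C_{2}=0$. For each admissible $\tau$ exactly one Pochhammer factor of $\psi(x,\tau q^{n})$ degenerates to $(q^{n+1})_{\infty}$ --- namely $(a_{i}\tau q^{n})_{\infty}$ when $\tau=q/a_{i}$, and $(Ax\tau q^{n})_{\infty}$ when $\tau=q/(Ax)$ --- and this vanishes for $n<0$. Thus the bilateral sum in (\ref{JP}) truncates to $n\ge 0$, so $\varphi$, and therefore $\mathcal{E}_{3}\varphi$, admits an expansion in integer powers of $x$ (about $x=0$ when $\tau=q/a_{i}$, about $x=\infty$ when $\tau=q/(Ax)$). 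Since $q^{\lambda}=B/A\notin q^{\mathbb{Z}}$ forces $\lambda\notin\mathbb{Z}$, the non-integer power $x^{\lambda+1}$ cannot occur, so $C_{2}=0$ and $\mathcal{E}_{3}\varphi=C_{1}x^{2}$.

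Finally I would compute $C_{1}$ by extracting the coefficient of $x^{2}$, which is cleanest for $\tau=q/(Ax)$. There $x\cdot\tau q^{n}=q^{n+1}/A$ is independent of $x$, so $\varphi(x,q/(Ax))=(1-q)\sum_{n\ge 0}\psi$ is a power series in $1/x$ starting at order $1/x$; consequently only the top term $x^{3}(B-AT_{x})(B-AqT_{x})T_{x}^{-1}$ of $\mathcal{E}_{3}$ can produce $x^{2}$, and only by acting on the leading coefficient of $\varphi$. One checks $x^{3}(B-AT_{x})(B-AqT_{x})T_{x}^{-1}x^{-1}=(B-A)(Bq-A)x^{2}$, while the leading coefficient of $\varphi$ equals $(1-q)A^{-1}\sum_{n\ge 0}q^{n+1}(q^{n+1})_{\infty}/(q^{\lambda+n+1})_{\infty}$, which the $q$-binomial theorem (equivalently Lemma \ref{lem3psi3}) sums to $(1-q)q/(A(1-q^{\lambda+1}))$; multiplying and using $q^{\lambda+1}=Bq/A$ collapses the product to $C_{1}=(1-q)q(A-B)$. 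For $\tau=q/a_{i}$ the same coefficient must instead be read off at $x=0$, where the relevant bilateral series is a genuine ${}_{3}\psi_{3}$ whose value is furnished by Lemma \ref{lem3psi3}. The main obstacle is precisely this evaluation of $C_{1}$: one must organize $\mathcal{E}_{3}^{(x)}\psi$ --- using (\ref{preqpsi1}) and the shift rule $T_{x}\psi=\frac{1-Bxt}{1-Axt}\psi$ --- into a form whose Jackson sum reduces to a single ${}_{3}\psi_{3}$ at argument $1$, apply Lemma \ref{lem3psi3} correctly, and confirm, with the balancing condition $Aa_{1}a_{2}a_{3}=q^{2}Bb_{1}b_{2}b_{3}$, that the resulting constant is the same for every endpoint $\tau$ in the given set.
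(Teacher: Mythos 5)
Your proposal is correct and follows essentially the same route as the paper's proof: decompose $\mathcal{E}_{3}\varphi=C_{1}x^{2}+C_{2}x^{\lambda+1}$ via (\ref{C1C2}), kill $C_{2}$ because the truncated Jackson sum expands in integer powers of $x$ (resp. $1/x$) while $B/A\notin q^{\mathbb{Z}}$ forces $\lambda\notin\mathbb{Z}$, and then extract the coefficient of $x^{2}$. Your fully worked case $\tau=q/(Ax)$ reproduces the paper's computation of $\beta_{0}$ exactly (top term $x^{3}(B-AT_{x})(B-AqT_{x})T_{x}^{-1}$ acting on the leading $1/x$ coefficient, summed by the $q$-binomial theorem to give $(1-q)q(A-B)$), and your outlined reduction for $\tau=q/a_{i}$, namely organizing $\mathcal{E}_{3}\psi$ so that its Jackson sum becomes a regularized ${}_{3}\psi_{3}$ at argument $1$ evaluated by Lemma \ref{lem3psi3}, is precisely how the paper completes that case, using the $q$-difference equation satisfied by ${}_{3}\psi_{3}$ to convert the finite combination of values at $q,q^{2},q^{3}$ into $-\lim_{z\to1}(1-z){}_{3}\psi_{3}(z)$.
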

\begin{proof}
	For $\tau\in\{q/a_{1},q/a_{2},q/a_{3}\}$, the integral (\ref{JP}) becomes
	\begin{align}
		\varphi(x,\tau)=&\int_{0}^{\tau}t\frac{(Axt)_{\infty}}{(Bxt)_{\infty}}\frac{(a_{1}t,a_{2}t,a_{3}t)_{\infty}}{(b_{1}t,b_{2}t,b_{3}t)_{\infty}}\frac{d_{q}t}{t}\notag\\
		=&(1-q)\sum_{n=0}^{\infty}\frac{(Ax\tau q^{n})_{\infty}}{(Bx\tau q^{n})_{\infty}}\frac{(a_{1}\tau q^{n},a_{2}\tau q^{n},a_{3}\tau q^{n})_{\infty}}{(b_{1}\tau q^{n},b_{2}\tau q^{n},b_{3}\tau q^{n})_{\infty}}\tau q^{n}.
	\end{align}
	Thus we can apply the $q$-binomial theorem to $\displaystyle\frac{(Ax\tau q^{n})_{\infty}}{(Bx\tau q^{n})_{\infty}}$ if $|Bx\tau|<1$.
	Using the $q$-binomial theorem, we have
	\begin{align}
		\varphi(x,\tau)=&(1-q)\sum_{n=0}^{\infty}\sum_{m=0}^{\infty}\frac{(A/B)_{m}}{(q)_{m}}(Bx\tau q^{n})^{m}\frac{(a_{1}\tau q^{n},a_{2}\tau q^{n},a_{3}\tau q^{n})_{\infty}}{(b_{1}\tau q^{n},b_{2}\tau q^{n},b_{3}\tau q^{n})_{\infty}}\tau q^{n}.
	\end{align}
	Hence $\mathcal{E}_{3}\varphi(x,\tau)$ can be written as $\mathcal{E}_{3}\varphi(x,\tau)=\sum_{m=0}^{\infty}\alpha_{m}x^{m}$.
	By (\ref{C1C2}) and the condition $Bq/A\notin q^{\mathbb{Z}}$, we obtain $\mathcal{E}_{3}\varphi(x,\tau)=\alpha_{2}x^{2}$.
	By direct calculations, we have
	\begin{align}
		\alpha_{2}&=(1-q)(B-A)q\frac{(a_{1}\tau,a_{2}\tau,a_{3}\tau)_{\infty}}{(b_{1}\tau,b_{2}\tau,b_{3}\tau)_{\infty}}\notag\\
		&\times\sum_{k=1}^{3}\left(\left(-\frac{\tau}{q}\right)^{k}e_{k}(a)-(-\tau)^{k}e_{k}(b)\right){}_{3}\psi_{3}\left(\begin{array}{c}
			b_{1}\tau,b_{2}\tau,b_{3}\tau\\
			a_{1}\tau,a_{2}\tau,a_{3}\tau
		\end{array};q^{k}\right).
	\end{align}
	The function ${}_{3}\psi_{3}\left(\begin{array}{c}
		b_{1}\tau,b_{2}\tau,b_{3}\tau\\
		a_{1}\tau,a_{2}\tau,a_{3}\tau
	\end{array};z\right)$ satisfies the $q$-difference equation
	\begin{align}
		[&(1-a_{1}\tau q^{-1}T_{z})(1-a_{2}\tau q^{-1}T_{z})(1-a_{3}\tau q^{-1}T_{z})\notag\\
		&-z(1-b_{1}\tau T_{z})(1-b_{2}\tau T_{z})(1-b_{3}\tau T_{z})]{}_{3}\psi_{3}=0.
	\end{align}
	This equation can be rewritten as
	\begin{align}
		\sum_{k=0}^{3}\left(\left(-\frac{\tau}{q}\right)^{k}e_{k}(a)-(-\tau)^{k}e_{k}(b)\right)T_{z}^{k}{}_{3}\psi_{3}=0.
	\end{align}
	Therefore we have
	\begin{align}
		&\sum_{k=1}^{3}\left(\left(-\frac{\tau}{q}\right)^{k}e_{k}(a)-(-\tau)^{k}e_{k}(b)\right){}_{3}\psi_{3}\left(\begin{array}{c}
			b_{1}\tau,b_{2}\tau,b_{3}\tau\\
			a_{1}\tau,a_{2}\tau,a_{3}\tau
		\end{array};q^{k}\right)\notag\\
		&=-\lim_{z\to1}(1-z){}_{3}\psi_{3}\left(\begin{array}{c}
			b_{1}\tau,b_{2}\tau,b_{3}\tau\\
			a_{1}\tau,a_{2}\tau,a_{3}\tau
		\end{array};z\right).		
	\end{align}
	Using Lemma \ref{lem3psi3}, we get $\alpha_{2}=(1-q)q(A-B)$.
	
	Next, we consider the case $\tau=q/(Ax)$.
	By changing the variable $t\to qt/A$, the integral (\ref{JP}) is rewritten as
	\begin{align}
		\varphi(x,\tau)=\frac{q}{Ax}\int_{0}^{1}t\frac{(qt)_{\infty}}{(qBt/A)_{\infty}}\frac{(qa_{1}t/(Ax),qa_{2}t/(Ax),qa_{3}t/(Ax))_{\infty}}{(qb_{1}t/(Ax),qb_{2}t/(Ax),qb_{3}t/(Ax))_{\infty}}\frac{d_{q}t}{t}.
	\end{align}
	Applying the $q$-binomial theorem to $\displaystyle \frac{(qa_{i}t/(Ax))_{\infty}}{(qb_{i}t/(Ax))_{\infty}}$, we get
	\begin{align}\label{Axint}
		&\varphi(x,\tau)\notag\\
		&=\frac{q}{Ax}\sum_{n_{1},n_{2},n_{3}=0}^{\infty}\int_{0}^{1}t^{1+n_{1}+n_{2}+n_{3}}\frac{(qt)_{\infty}}{(qBt/A)_{\infty}}\prod_{i=1}^{3}\left(\frac{(a_{i}/b_{i})_{n_{i}}}{(q)_{n_{i}}}\left(\frac{qb_{i}}{A}\right)^{n_{i}}\right)\frac{d_{q}t}{t}x^{-n_{1}-n_{2}-n_{3}}.
	\end{align}
	Thus we find that $\mathcal{E}_{3}\varphi(x,\tau)=\sum_{m=0}^{\infty}\beta_{m}x^{2-m}$.
	Similar to the case $\tau\in\{q/a_{1},q/a_{2},q/a_{3}\}$, we have $\mathcal{E}_{3}\varphi(x,\tau)=\beta_{0}x^{2}$.
	From (\ref{defE3}) and (\ref{Axint}), we get
	\begin{align}
		\beta_{0}=(B-Aq^{-1})(B-A)q\frac{q}{A}\int_{0}^{1}t\frac{(qt)_{\infty}}{(qBt/A)_{\infty}}\frac{d_{q}t}{t}=q(1-q)(A-B).
	\end{align}
	This completes the proof.
\end{proof}
We can consider the above discussion when replacing the integrand $\psi$ with $\psi C$, where $C$ is {some} pseudo-constant.
The function
\begin{align}
	C(t)=t^{\alpha-\beta}\frac{\theta(q^{\alpha}t)}{\theta(q^{\beta}t)},
\end{align}
is a pseudo-constant, that is, $T_{t}C(t)=C(t)$.
Here, $\theta(t)=(t,q/t)_{\infty}$.
Thus the function
\begin{align}
	\tilde{C}(x,t)=x^{\lambda}t^{-2}\frac{\theta(Bxt)}{\theta(Axt)}\frac{\theta(b_{1}t)}{\theta(a_{1}t)}\frac{\theta(b_{2}t)}{\theta(a_{2}t)}\frac{\theta(b_{3}t)}{\theta(a_{3}t)}
\end{align}
is a pseudo-constant for $x$ and $t$.
Therefore we find that the integral
\begin{align}
	&\label{definttilde}\tilde{\varphi}(x,\sigma)=\int_{0}^{\sigma^{-1}\infty}\psi(x,t)\times \tilde{C}(x,t)\frac{d_{q}t}{t}
	=x^{\lambda}\int_{0}^{\sigma\infty}\tilde{\psi}(x,s)\frac{d_{q}s}{s},\\
	&\tilde{\psi}(x,s)=s\frac{(qs/(Bx))_{\infty}}{(qs/(Ax))_{\infty}}\frac{(qs/b_{1},qs/b_{2},qs/b_{3})_{\infty}}{(qs/a_{1},qs/a_{2},qs/a_{3})_{\infty}},
\end{align}
also satisfies the equation (\ref{eqJP3}).
The following lemma can be proved in the same way as for Lemma \ref{lemintcalcu}, so we do not prove it here.
\begin{lemm}\label{lemintcalcu2}
	Suppose $\sigma\in\{b_{1},b_{2},b_{3},Bx\}$ and $B/A\notin q^{\mathbb{Z}}$. Then the integral (\ref{definttilde}) satisfies the equation $\mathcal{E}_{3}\tilde{\varphi}(x,\sigma)=(1-q)q^{-1}(B-A)x^{\lambda+1}$.
\end{lemm}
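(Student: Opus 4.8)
The plan is to follow the proof of Lemma \ref{lemintcalcu} essentially line by line, using the fact, established just before the statement, that $\tilde{\varphi}(x,\sigma)$ also solves the equation (\ref{eqJP3}). First, since $\tilde{\varphi}$ satisfies $(1-q^{-2}T_{x})(B-Aq^{-1}T_{x})\mathcal{E}_{3}\tilde{\varphi}=0$, the argument producing (\ref{C1C2}) gives $\mathcal{E}_{3}\tilde{\varphi}(x,\sigma)=C_{1}x^{2}+C_{2}x^{\lambda+1}$ for constants $C_{1},C_{2}$, and it remains to show $C_{1}=0$ and $C_{2}=(1-q)q^{-1}(B-A)$. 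Heuristically this is the image of Lemma \ref{lemintcalcu} under the duality $A\leftrightarrow B$, $a_{i}\leftrightarrow b_{i}$ built into the pseudo-constant $\tilde{C}$, which interchanges the two local solutions $x^{2}$ and $x^{\lambda+1}$ and sends the earlier answer $(1-q)q(A-B)x^{2}$ to $(1-q)q^{-1}(B-A)x^{\lambda+1}$; the work is to make this rigorous by repeating the coefficient extraction.

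Second, for $\sigma\in\{b_{1},b_{2},b_{3}\}$ I would write $\tilde{\varphi}(x,\sigma)=x^{\lambda}(1-q)\sum_{n\geq0}\tilde{\psi}(x,\sigma q^{n})$, the bilateral Jackson sum truncating to $n\geq0$ because the numerator factor $(qs/\sigma)_{\infty}$ vanishes at $s=\sigma q^{n}$ for $n<0$. Applying the $q$-binomial theorem to $(qs/(Bx))_{\infty}/(qs/(Ax))_{\infty}$, with base $z=qs/(Ax)$ and ratio $A/B$, expands $\tilde{\varphi}$ as a series in $x^{\lambda-m}$ $(m\geq0)$. Hence $\mathcal{E}_{3}\tilde{\varphi}$ is supported on powers $x^{\lambda+\ell}$ with $\ell\in\mathbb{Z}$, and since the hypothesis $B/A\notin q^{\mathbb{Z}}$ gives $\lambda\notin\mathbb{Z}$, the integer power $x^{2}$ cannot occur, so $C_{1}=0$ and $\mathcal{E}_{3}\tilde{\varphi}=C_{2}x^{\lambda+1}$. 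Reading off the coefficient of $x^{\lambda+1}$ and carrying out the Jackson sum over $n$ through $(q\sigma q^{n}/a_{i})_{\infty}=(q\sigma/a_{i})_{\infty}/(q\sigma/a_{i})_{n}$ reproduces, in parallel with the displayed formula in Lemma \ref{lemintcalcu}, a finite combination over $k=1,2,3$ of values of ${}_{3}\psi_{3}$ with upper parameters $q\sigma/a_{1},q\sigma/a_{2},q\sigma/a_{3}$, lower parameters $q\sigma/b_{1},q\sigma/b_{2},q\sigma/b_{3}$, evaluated at $z=q^{k}$. The $q$-difference equation satisfied by this ${}_{3}\psi_{3}$ collapses the $k$-sum into $-\lim_{z\to1}(1-z){}_{3}\psi_{3}$, and Lemma \ref{lem3psi3} evaluates the limit; after the prefactor cancels against the value $(q\sigma/a_{1},q\sigma/a_{2},q\sigma/a_{3})_{\infty}/(q\sigma/b_{1},q\sigma/b_{2},q\sigma/b_{3})_{\infty}$ one is left with $C_{2}=(1-q)q^{-1}(B-A)$.

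Third, for $\sigma=Bx$ I would substitute $s=Bx\,t$ so that the path becomes $[0,1]$ and $\tilde{\varphi}(x,Bx)=Bx^{\lambda+1}\int_{0}^{1}t\frac{(qt)_{\infty}}{(qBt/A)_{\infty}}\frac{(qBxt/b_{1},qBxt/b_{2},qBxt/b_{3})_{\infty}}{(qBxt/a_{1},qBxt/a_{2},qBxt/a_{3})_{\infty}}\frac{d_{q}t}{t}$, the analog of the $\tau=q/(Ax)$ reduction. Applying the $q$-binomial theorem to the three pairs $(qBxt/b_{i})_{\infty}/(qBxt/a_{i})_{\infty}$ expands $\tilde{\varphi}$ in ascending powers $x^{\lambda+1+K}$ $(K\geq0)$; as before $C_{1}=0$, and only the leading term $K=0$ acted on by the constant part of $\mathcal{E}_{3}$ contributes to $x^{\lambda+1}$, so $C_{2}$ reduces to a single Jackson integral proportional to $\int_{0}^{1}t\frac{(qt)_{\infty}}{(qBt/A)_{\infty}}\frac{d_{q}t}{t}$, which evaluates directly to give again $(1-q)q^{-1}(B-A)$.

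The main obstacle is the bookkeeping of the second step: one must verify that the coefficient of $x^{\lambda+1}$ assembles into exactly the ${}_{3}\psi_{3}$ combination with the reciprocal arguments above, and that the contiguous relation together with Lemma \ref{lem3psi3} produce the precise constant $(1-q)q^{-1}(B-A)$, in particular the inverse power of $q$ and the sign flip $(A-B)\mapsto(B-A)$ dictated by the duality. Everything else is a transcription of the proof of Lemma \ref{lemintcalcu} with the roles of $x^{2}$ and $x^{\lambda+1}$, and of $A$ and $B$, interchanged.
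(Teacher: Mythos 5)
Your overall strategy is exactly the one the paper intends (the paper omits the proof of this lemma, deferring to the proof of Lemma \ref{lemintcalcu}): reduce to $\mathcal{E}_{3}\tilde{\varphi}=C_{1}x^{2}+C_{2}x^{\lambda+1}$ via (\ref{C1C2}); kill $C_{1}$ by observing that the $q$-binomial expansion of $\tilde{\varphi}$ produces only powers $x^{\lambda+\ell}$ with $\ell\in\mathbb{Z}$ while $\lambda\notin\mathbb{Z}$; then read $C_{2}$ off the coefficient of $x^{\lambda+1}$, using the contiguous relation for ${}_{3}\psi_{3}$ together with Lemma \ref{lem3psi3} when $\sigma\in\{b_{1},b_{2},b_{3}\}$, and a direct evaluation when $\sigma=Bx$. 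Your identification of the truncation of the bilateral sum, of the dual ${}_{3}\psi_{3}$ parameters ($q\sigma/a_{i}$ upper, $q\sigma/b_{i}$ lower), and of the reduction of the $\sigma=Bx$ case to the single Jackson integral $\int_{0}^{1}t\,(qt)_{\infty}/(qBt/A)_{\infty}\,d_{q}t/t$ are all correct.

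The gap is in the very last step, which you assert rather than compute: the evaluation does not produce the constant you claim. Your guiding heuristic --- that the lemma is the image of Lemma \ref{lemintcalcu} under the duality $A\leftrightarrow B$, $a_{i}\leftrightarrow b_{i}$, $x^{2}\leftrightarrow x^{\lambda+1}$ --- fails because $\mathcal{E}_{3}$ is \emph{not} symmetric under this duality: its top coefficient $x^{3}(B-AT_{x})(B-AqT_{x})$ (the term that produced $\beta_{0}$ in Lemma \ref{lemintcalcu}) contains no elementary symmetric polynomials, whereas its constant coefficient, the one relevant here, is $-e_{3}(a)B^{-1}(1-q^{-1}T_{x})(1-T_{x})$. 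In your $\sigma=Bx$ computation the only contribution to $x^{\lambda+1}$ is this constant coefficient acting on the leading term of $\tilde{\varphi}(x,Bx)$, whose coefficient is $(1-q)AB/(A-qB)$, so
\begin{align*}
C_{2}&=\frac{(1-q)AB}{A-qB}\cdot\Bigl(-e_{3}(a)B^{-1}\Bigr)q^{-(\lambda+1)}(1-q^{\lambda+1})(1-q^{\lambda})
=\frac{(1-q)AB}{A-qB}\cdot\Bigl(-\frac{e_{3}(a)(A-qB)(A-B)}{qAB^{2}}\Bigr)\\
&=(1-q)q^{-1}(B-A)\,\frac{a_{1}a_{2}a_{3}}{B},
\end{align*}
using $q^{\lambda}=B/A$. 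This differs from the asserted value by the factor $a_{1}a_{2}a_{3}/B$ (equal to $q^{2}b_{1}b_{2}b_{3}/A$ by the balancing condition), which is not $1$ in general; a numerical check ($q=1/2$, $A=1$, $a=(2,3,5)$, $b=(1,3/2,2)$, $B=40$, evaluated at $x=0$) confirms the extra factor, and the same factor arises in the $\sigma\in\{b_{i}\}$ cases. So the computation you outline, carried out honestly, contradicts the constant you claim to obtain --- and in fact shows that the constant printed in the lemma itself is off by $a_{1}a_{2}a_{3}/B$ for the normalization (\ref{definttilde}). This is harmless for Theorem \ref{thmint3}, which needs only that $\mathcal{E}_{3}\tilde{\varphi}(x,\sigma)$ be independent of $\sigma$ so that differences are annihilated; but a proof ending in the printed constant cannot be completed along your lines unless $\tilde{C}$ (hence $\tilde{\varphi}$) is renormalized by the constant $B/(a_{1}a_{2}a_{3})$, and your write-up glosses over precisely the bookkeeping step where this shows up.
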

By taking the difference between $\varphi(x,\tau_{1})$ and $\varphi(x,\tau_{2})$, or between $\tilde{\varphi}(x,\sigma_{1})$ and $\tilde{\varphi}(x,\sigma_{2})$, we obtain integral solutions for the equation $\mathcal{E}_{3}f(x)=0$.
\begin{thm}\label{thmint3}
	Let $\tau_{1}$, $\tau_{2}\in\{q/a_{1},q/a_{2},q/a_{3},q/(Ax)\}$ and $\sigma_{1}$, $\sigma_{2}\in\{b_{1},b_{2},b_{3},Bx\}$.
	We suppose $q^{\lambda}=B/A\notin q^{\mathbb{Z}}$ and $Aa_{1}a_{2}a_{3}=q^{2}Bb_{1}b_{2}b_{3}$.
	Then the integrals
	\begin{align}
		\label{phi3}&\varphi_{3}(x,\tau_{1},\tau_{2})=\int_{\tau_{1}}^{\tau_{2}}\frac{(Axt,a_{1}t,a_{2}t,a_{3}t)_{\infty}}{(Bxt,b_{1}t,b_{2}t,b_{3}t)_{\infty}}d_{q}t,\\
		\label{tildephi3}&\tilde{\varphi}_{3}(x,\sigma_{1},\sigma_{2})=x^{\lambda}\int_{\sigma_{1}}^{\sigma_{2}}\frac{(qs/(Bx),qs/b_{1},qs/b_{2},qs/b_{3})_{\infty}}{(qs/(Ax),qs/a_{1},qs/a_{2},qs/a_{3})_{\infty}}d_{q}s,
	\end{align}
	satisfy the equation $\mathcal{E}_{3}f(x)=0$ {(\ref{defE3})}.
\end{thm}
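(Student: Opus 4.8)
The plan is to reduce both assertions to the single-endpoint computations already carried out in Lemmas \ref{lemintcalcu} and \ref{lemintcalcu2}, exploiting the linearity of the operator $\mathcal{E}_3$. First I would rewrite the integrals (\ref{phi3}) and (\ref{tildephi3}), which use the measure $d_qt$, in terms of the Jordan-Pochhammer integral $\varphi(x,\tau)$ of (\ref{JP}) and its companion $\tilde\varphi(x,\sigma)$ of (\ref{definttilde}), which use the measure $d_qt/t$. Since $\int_0^\tau f(t)\,d_qt=\int_0^\tau tf(t)\,\frac{d_qt}{t}$ and the choice $\alpha=1$ forces $\psi(x,t)=t\cdot\frac{(Axt,a_1t,a_2t,a_3t)_\infty}{(Bxt,b_1t,b_2t,b_3t)_\infty}$, the one-sided integral $\int_0^\tau$ of the integrand in (\ref{phi3}) is exactly $\int_0^\tau\psi(x,t)\,\frac{d_qt}{t}$, and similarly for the tilde case.

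The key bridging observation is that for the prescribed endpoints the bilateral Jackson integral collapses to a one-sided one. For $\tau\in\{q/a_1,q/a_2,q/a_3,q/(Ax)\}$ the sampling points of $\int_0^{\tau\infty}$ are $t=\tau q^n$ with $n\in\mathbb{Z}$, and the relevant numerator factor (such as $(a_it)_\infty$ or $(Axt)_\infty$) contains the vanishing block $(q^{n+1})_\infty$ for every $n\le -1$, so all terms of negative index drop out and $\int_0^{\tau\infty}=\int_0^{\tau}$; the analogous truncation holds for $\sigma\in\{b_1,b_2,b_3,Bx\}$ in (\ref{definttilde}). I would also record that each admissible endpoint satisfies $T_x\tau=q^l\tau$ for some $l\in\mathbb{Z}$ (for instance $l=-1$ for $\tau=q/(Ax)$, $l=1$ for $\sigma=Bx$, and $l=0$ otherwise), so that Lemmas \ref{lemintcalcu} and \ref{lemintcalcu2} apply to each endpoint individually. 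Combining this with Definition \ref{defJac}, one obtains
\begin{align}
	\varphi_3(x,\tau_1,\tau_2)=\varphi(x,\tau_2)-\varphi(x,\tau_1),\qquad \tilde\varphi_3(x,\sigma_1,\sigma_2)=\tilde\varphi(x,\sigma_2)-\tilde\varphi(x,\sigma_1).
\end{align}

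Finally I would apply the linear operator $\mathcal{E}_3$ to these differences. By Lemma \ref{lemintcalcu}, $\mathcal{E}_3\varphi(x,\tau_i)=(1-q)q(A-B)x^2$ for both $i=1,2$ regardless of the chosen endpoint, so subtraction gives $\mathcal{E}_3\varphi_3=(1-q)q(A-B)x^2-(1-q)q(A-B)x^2=0$; likewise Lemma \ref{lemintcalcu2} yields $\mathcal{E}_3\tilde\varphi(x,\sigma_i)=(1-q)q^{-1}(B-A)x^{\lambda+1}$ and hence $\mathcal{E}_3\tilde\varphi_3=0$. The crucial fact that makes the inhomogeneous right-hand sides independent of the endpoint — and therefore cancel upon subtraction — is precisely the content of the two lemmas, so the only genuinely delicate step is the truncation bookkeeping for the $x$-dependent endpoints $q/(Ax)$ and $Bx$: I would check that the vanishing of numerator factors is compatible with the $q$-shift in $x$, so that the one-sided integrals are the same functions of $x$ to which the lemmas were applied. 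Once that is verified the cancellation is immediate.
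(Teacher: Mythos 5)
Your proposal is correct and follows essentially the same route as the paper: there the theorem is obtained precisely by subtracting the conclusions of Lemmas \ref{lemintcalcu} and \ref{lemintcalcu2}, whose proofs already contain the bilateral-to-one-sided truncation and the $T_{x}\tau=q^{l}\tau$ bookkeeping that you describe. The cancellation of the endpoint-independent inhomogeneous terms $(1-q)q(A-B)x^{2}$ and $(1-q)q^{-1}(B-A)x^{\lambda+1}$ is exactly the paper's argument.
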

\begin{rem}
	By the $q$-binomial theorem, these integrals are $q$-analogs of the integral (\ref{RiePapint}), more precisely, by taking the limit $q\to1$, we have
	\begin{align}
		&\int_{\tau_{1}}^{\tau_{2}}\frac{(Axt,a_{1}t,a_{2}t,a_{3}t)_{\infty}}{(Bxt,b_{1}t,b_{2}t,b_{3}t)_{\infty}}d_{q}t\notag\\
		&\to\int_{\tau_{1}'}^{\tau_{2}'}(1-tx)^{-\nu}(1-tt_{1})^{\nu+l_{1}-h_{1}-1}(1-tt_{2})^{\nu+l_{2}-h_{2}-1}(1-tt_{3})^{\nu+l_{3}-h_{3}-1}dt,
	\end{align}
	with the change of parameters (\ref{changepara}).
	Here, $\tau_{i}\to\tau_{i}'$.
\end{rem}
By considering the case $M=2$ in Proposition \ref{propeqJP}, we obtain the integral solutions for the variant of the $q$-hypergeometric equation of degree two.
In the same way as for the case $M=3$, we have the following theorem.
\begin{thm}\label{thmint2}
	Let $\tau_{1}$, $\tau_{2}\in\{0,q/a_{1},q/a_{2},q/(Ax)\}$ and $\sigma_{1},\sigma_{2}\in\{b_{1},b_{2},Bx,\sigma\infty\}$, where, $\sigma$ is an arbitrary constant.
	We suppose $q^{\lambda}=B/A\notin q^{\mathbb{Z}_{\geq-1}}$, $q^{\alpha+1}B/A\notin q^{\mathbb{Z}_{\leq 0}}$ and $Aa_{1}a_{2}=q^{\alpha+1}Bb_{1}b_{2}$.
	Then the integrals
	\begin{align}
		\label{phi2}&\varphi_{2}(x,\tau_{1},\tau_{2})=\int_{\tau_{1}}^{\tau_{2}}t^{\alpha}\frac{(Axt,a_{1}t,a_{2}t)_{\infty}}{(Bxt,b_{1}t,b_{2}t)_{\infty}}\frac{d_{q}t}{t},\\
		\label{tildephi2}&\tilde{\varphi}_{2}(x,\sigma_{1},\sigma_{2})=x^{\lambda}\int_{\sigma_{1}}^{\sigma_{2}}\frac{(qs/(Bx),qs/b_{1},qs/b_{2})_{\infty}}{(qs/(Ax),qs/a_{1},qs/a_{2})_{\infty}}d_{q}s,
	\end{align}
	satisfy the equation $\mathcal{E}_{2}f(x)=0$, where
	\begin{align}
		\mathcal{E}_{2}=[&x^{2}(1-q^{\alpha}T_{x})(B-AT_{x})-x(e_{1}(a)-q^{\alpha}e_{1}(b)T_{x})(1-T_{x})\notag\\
		&+e_{2}(a)B^{-1}(1-q^{-1}T_{x})(1-T_{x})]T_{x}^{-1}.
	\end{align}
\end{thm}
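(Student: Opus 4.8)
The plan is to imitate the proof of Theorem \ref{thmint3} line by line, now with $M=2$ and general $\alpha$. Each admissible endpoint satisfies $T_x\tau=q^l\tau$ for some $l\in\mathbb{Z}$ (for $\tau\in\{q/a_1,q/a_2,q/(Ax)\}$ one has $l=0,0,-1$, while $\tau=0$ and $\sigma=\sigma\infty$ are fixed by $T_x$ trivially), so by Proposition \ref{propeqJP} the integral $\varphi(x,\tau)=\int_0^{\tau\infty}\psi(x,t)\frac{d_qt}{t}$, with $\psi=t^\alpha\frac{(Axt,a_1t,a_2t)_\infty}{(Bxt,b_1t,b_2t)_\infty}$, satisfies the $M=2$ instance $\mathcal{D}\varphi=0$ of (\ref{eqJP}), where
\begin{align*}
\mathcal{D}=&\;x^2[T_x^{-1}-q^\alpha](B-AT_x)(B-AqT_x)-x[e_1(a)T_x^{-1}-q^\alpha e_1(b)](B-AT_x)(1-T_x)\\
&+[e_2(a)T_x^{-1}-q^\alpha e_2(b)](1-q^{-1}T_x)(1-T_x).
\end{align*}
For $\tau\in\{q/a_1,q/a_2,q/(Ax)\}$ the bilateral sum truncates (the relevant $(a_it)_\infty$ or $(Axt)_\infty$ vanishes for $n<0$) to the half-infinite integral $\int_0^\tau$ appearing in the theorem, and the two genuinely new endpoints $\tau=0$ and $\sigma=\sigma\infty$ are incorporated at the end.

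Next I would factor out the spurious part. Imposing the balance condition $Aa_1a_2=q^{\alpha+1}Bb_1b_2$, i.e. $Ae_2(a)=q^{\alpha+1}Be_2(b)$, a direct computation (checking the coefficients of $x^0,x^1,x^2$ separately, exactly as in the reduction of (\ref{eqJP3}) to (\ref{defE3})) gives the factorization $\mathcal{D}=(B-Aq^{-1}T_x)\mathcal{E}_2$. Hence $\mathcal{E}_2\varphi$ lies in the kernel of $B-Aq^{-1}T_x$, which is spanned by $x^{\lambda+1}$ with a pseudo-constant coefficient; since $\varphi$ is holomorphic in the relevant region, the coefficient is a genuine constant, so $\mathcal{E}_2\varphi(x,\tau)=C(\tau)x^{\lambda+1}$. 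Note that, unlike the $M=3$ case where the extra factor $(1-q^{-2}T_x)$ supplied the second exponent $x^2$, here only the single exponent $x^{\lambda+1}$ survives. The pseudo-constant trick of (\ref{definttilde}), now with $\tilde C(x,t)=x^\lambda t^{-2}\frac{\theta(Bxt)\theta(b_1t)\theta(b_2t)}{\theta(Axt)\theta(a_1t)\theta(a_2t)}$, shows that $\tilde\varphi(x,\sigma)$ satisfies the same equation, whence $\mathcal{E}_2\tilde\varphi(x,\sigma)=\tilde C(\sigma)x^{\lambda+1}$ as well.

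The heart of the argument is the determination of $C(\tau)$ and $\tilde C(\sigma)$ by the method of Lemmas \ref{lemintcalcu} and \ref{lemintcalcu2}. Expanding by the $q$-binomial theorem, $\varphi(x,q/a_i)$ is a series in non-negative integer powers of $x$, $\varphi(x,q/(Ax))$ is $x^{-\alpha}$ times a series in $x^{-1}$, and $\tau=0$ gives the truncated integral (convergent for suitable $\alpha$). Because $q^\lambda\notin q^{\mathbb{Z}_{\geq-1}}$ and $q^{\alpha+1}B/A\notin q^{\mathbb{Z}_{\leq0}}$, the exponent $\lambda+1$ occurs in none of these expansions, forcing $C(\tau)=0$; thus each $\varphi(x,\tau)$ already solves $\mathcal{E}_2 f=0$, and a fortiori so does $\varphi_2(x,\tau_1,\tau_2)$. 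For the $\tilde\varphi$-family the expansions run over exponents in $\lambda+\mathbb{Z}$, so $x^{\lambda+1}$ does occur; here I would extract its coefficient and reduce it, as in Lemma \ref{lemintcalcu}, to the value of a bilateral ${}_2\psi_2$ at its pole, evaluated by the $M=2$ analogue of Lemma \ref{lem3psi3}, namely $\lim_{z\to1}(1-z)\,{}_2\psi_2\!\left(\begin{smallmatrix}a_1,a_2\\ b_1,b_2\end{smallmatrix};z\right)=\frac{(a_1,a_2)_\infty}{(b_1,b_2)_\infty}$, proved identically by the $q$-binomial theorem. This yields one common constant $\tilde C$, independent of $\sigma$, so that $\tilde\varphi_2(x,\sigma_1,\sigma_2)$ again kills the $x^{\lambda+1}$ term.

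The main obstacle I anticipate is the bookkeeping in the constant computation together with the two endpoints $\tau=0$ and $\sigma=\sigma\infty$, which have no counterpart in Theorem \ref{thmint3}: one must verify convergence of the corresponding truncated, respectively fully bilateral, Jackson integrals, check that the hypothesis $T_x\tau=q^l\tau$ of Proposition \ref{propeqJP} still applies, and confirm that these endpoints produce the same constant as the others so that every difference is admissible. The factorization $\mathcal{D}=(B-Aq^{-1}T_x)\mathcal{E}_2$ is routine once the balance condition is used, and the ${}_2\psi_2$ evaluation is a direct transcription of Lemma \ref{lem3psi3}; the delicate point is the exponent matching, where the non-resonance hypotheses $q^\lambda\notin q^{\mathbb{Z}_{\geq-1}}$ and $q^{\alpha+1}B/A\notin q^{\mathbb{Z}_{\leq0}}$ are exactly what guarantees that $\mathcal{E}_2$ applied to each of the admissible integrals is a pure multiple of $x^{\lambda+1}$, so that the stated differences vanish.
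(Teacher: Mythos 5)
Your plan is the one the paper itself intends (its proof is literally ``in the same way as for the case $M=3$''), and its algebraic core is correct: the $M=2$ instance of Proposition \ref{propeqJP}, the factorization $\mathcal{D}=(B-Aq^{-1}T_{x})\mathcal{E}_{2}$ under $Ae_{2}(a)=q^{\alpha+1}Be_{2}(b)$, the identification of $\ker(B-Aq^{-1}T_{x})$ with pseudo-constant multiples of the single exponent $x^{\lambda+1}$, and the resulting dichotomy (each $\varphi$-family integral solves $\mathcal{E}_{2}f=0$ outright, while the $\tilde{\varphi}$-family does so only after taking differences) all check out. The genuine gap is the endpoint $\sigma\infty$, which you flag but leave open, and which really cannot be treated ``in the same way'' as the others: on a generic lattice $\sigma q^{\mathbb{Z}}$ nothing truncates, and the $q$-binomial expansion of $(qs/(Bx))_{\infty}/(qs/(Ax))_{\infty}$ in powers of $x^{-1}$ is invalid for $s=\sigma q^{n}$, $n\to-\infty$, so the coefficient extraction leading to your ${}_{2}\psi_{2}$ evaluation never gets started; Lemmas \ref{lemintcalcu}--\ref{lemintcalcu2} have no counterpart of this step. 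Closing it needs a new idea. One route: $\mathcal{E}_{2}\tilde{\varphi}(x,\sigma\infty)/x^{\lambda+1}$ is a pseudo-constant whose only possible poles in $x$ are simple and lie on the single orbit $A^{-1}\sigma q^{\mathbb{Z}}$, so it descends to a meromorphic function on $\mathbb{C}^{*}/q^{\mathbb{Z}}$ with at most one simple pole and is therefore an honest constant; its value is then read off on the orbit $x\in (\sigma/B)q^{\mathbb{Z}}$, where the bilateral integral coincides term by term with the $\sigma_{j}=Bx$ integral, whence the constant equals $\tilde{C}$. Alternatively, and closer to the paper, obtain the $\sigma\infty$ integrals as limits of the $b_{3}$-endpoint integrals of Theorem \ref{thmint3} with $b_{3}=q^{\alpha-1}a_{3}=\sigma q^{-N}$, $N\to\infty$, i.e.\ the degeneration of Remark \ref{remE3toE2} (the endpoint $0$ of the $\varphi$-family likewise comes from $q/a_{3}\to0$).

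Two further defects should be repaired. First, your pseudo-constant is copied from the $\alpha=1$ case: with $\tilde{C}(x,t)=x^{\lambda}t^{-2}\theta(Bxt)\theta(b_{1}t)\theta(b_{2}t)/(\theta(Axt)\theta(a_{1}t)\theta(a_{2}t))$ one has $T_{t}\tilde{C}=q^{\alpha-1}\tilde{C}$, because the theta ratio gains the factor $Axa_{1}a_{2}/(Bxb_{1}b_{2})=q^{\alpha+1}$ under $T_{t}$; so for $\alpha\neq1$ the product $\psi\tilde{C}$ satisfies the analogue of (\ref{preqpsi2}) with the wrong exponent and is not the integrand of (\ref{tildephi2}). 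The correct power is $t^{-\alpha-1}$, which makes $\tilde{C}$ genuinely $T_{t}$- and $T_{x}$-invariant and turns $\psi\tilde{C}$, after $s=1/t$, into exactly the integrand of (\ref{tildephi2}). Second, your non-resonance step claims more than the hypotheses give: the expansion of $\mathcal{E}_{2}\varphi(x,q/(Ax))$ runs over the exponents $2-\alpha-k$, $k\geq0$, and $q^{\alpha+1}B/A\notin q^{\mathbb{Z}_{\leq0}}$ forbids $\lambda+1=2-\alpha-k$ only for $k\geq2$; the cases $q^{\alpha+1}B/A=q^{2}$ ($k=0$) and $q^{\alpha+1}B/A=q$ ($k=1$) remain admissible. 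In the first, the offending coefficient vanishes because $(1-q^{\alpha}T_{x})T_{x}^{-1}$ annihilates the leading term $x^{-\alpha}$; in the second, its vanishing is a genuine identity, obtained for instance by analytic continuation from generic $B/A$, where the kernel argument forces it to be zero. Your pure exponent counting must be supplemented by these two observations — they are exactly what allows the theorem's hypotheses to be as weak as stated.
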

\begin{rem}\label{remE2toH2}
	The configuration of the equation $\mathcal{E}_{2}f(x)=0$ is given as follows.
	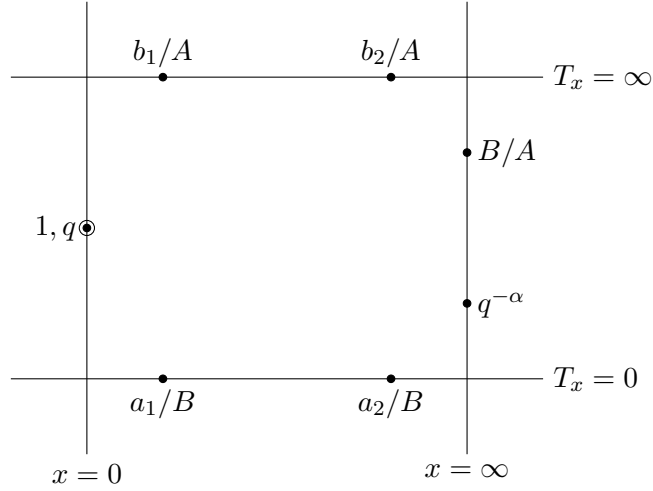
\begin{figure}[H]
		\centering
		\begin{tikzpicture}
			%\draw [help lines] (0,0) grid (10,6);
			\draw (1,0)--(1,6);
			\draw [below] (1,0) node {$x=0$};
			\draw (6,0)--(6,6);
			\draw [below] (6,0) node {$x=\infty$};
			\draw (0,1)--(7,1);
			\draw [right] (7,1) node {$T_{x}=0$};
			\draw (0,5)--(7,5);
			\draw [right] (7,5) node {$T_{x}=\infty$};
			\filldraw (1,3) circle [radius=0.05];
			\draw [left] (1,3) node {$1,q$};
			\draw (1,3) circle [radius=0.1];
			%\draw [left] (1,4) node {$q^{\lambda_{+}}$};
			\filldraw (6,2) circle [radius=0.05];
			\draw [right] (6,2) node {$q^{-\alpha}$};
			\filldraw (6,4) circle [radius=0.05];
			\draw [right] (6,4) node {$B/A$};
			\filldraw (2,1) circle [radius=0.05];
			\draw [below] (2,1) node{$a_{1}/B$};
			\filldraw (5,1) circle [radius=0.05];
			\draw [below] (5,1) node{$a_{2}/B$};
			\filldraw (2,5) circle [radius=0.05];
			\draw [above] (2,5) node{$b_{1}/A$};
			\filldraw (5,5) circle [radius=0.05];
			\draw [above] (5,5) node{$b_{2}/A$};
		\end{tikzpicture}
		\caption{the configuration of $\mathcal{E}_{2}f(x)=0$.}
		\label{conE2}
	\end{figure}
	Therefore the change of the dependent variable and parameters, which transforms $\mathcal{E}_{2}f(x)=0$ into $\mathcal{H}_{2}g(x)=0$, is given as follows:
	\begin{align}\label{changepara2}
		\notag&g(x)=x^{\lambda_{0}}f(x),\ \ \alpha=\lambda_{0}+\alpha_{1},\ \ B/A=q^{-\alpha_{2}-\lambda_{0}}, \\
		\notag&a_{i}/B=q^{h_{i}+1/2}t_{i},\ \ b_{i}/A=q^{l_{i}-1/2}t_{i}, \\
		&\lambda_{0}=\frac{1}{2}(h_{1}+h_{2}-l_{1}-l_{2}-\alpha_{1}-\alpha_{2}+1.
	\end{align}
	In \cite{HMST}, it was shown that the function
	\begin{align}
		g_{1}(x)=x^{-\alpha_{1}}\Phi^{(1)}\left(\begin{array}{c}
			q^{\lambda_{0}+\alpha_{1}}; q^{\lambda_{0}+\alpha_{1}-h_{2}+l_{2}}, q^{\lambda_{0}+\alpha_{1}-h_{1}+l_{1}}\\
			q^{\alpha_{1}-\alpha_{2}+1}
		\end{array};q^{l_{1}+1/2}\frac{t_{1}}{x}, q^{l_{2}+1/2}\frac{t_{2}}{x}\right),
	\end{align}
	satisfies the equation $\mathcal{H}_{2}g_{1}(x)=0$, where $\Phi^{(1)}$ is the $q$-Appell hypergeometric series given by
	\begin{align}
		\Phi^{(1)}\left(\begin{array}{c}
			a; b_{1},b_{2}\\
			c
		\end{array};x_{1},x_{2}\right)=\sum_{n_{1}=0}^{\infty}\sum_{n_{2}=0}^{\infty}\frac{(a)_{n_{1}+n_{2}}(b_{1})_{n_{1}}(b_{2})_{n_{2}}}{(c)_{n_{1}+n_{2}}(q)_{n_{1}}(q)_{n_{2}}}x_{1}^{n_{1}}x_{2}^{n_{2}}.
	\end{align}
	By using Andrews's formula \cite{And}
	\begin{align}
		\int_{0}^{1}t^{\alpha}\frac{(qt,b_{1}x_{1}t,b_{2}x_{2}t)_{\infty}}{(ct/q^{\alpha},x_{1}t,x_{2}t)_{\infty}}\frac{d_{q}t}{t}=(1-q)\frac{(q,c)_{\infty}}{(q^{\alpha},c/q^{\alpha})_{\infty}}\Phi^{(1)}\left(\begin{array}{c}
			q^{\alpha}; b_{1},b_{2}\\
			c
		\end{array};x_{1},x_{2}\right),
	\end{align} 
	the solution $g_{1}(x)$ can be obtained from Theorem \ref{thmint2}.
	More precisely, we have
	\begin{align}
		g_{1}(x)=\frac{1}{1-q}\frac{(q^{\lambda_{0}+\alpha_{1}},q^{-\alpha_{2}+1-\lambda_{0}})_{\infty}}{(q,q^{\alpha_{1}-\alpha_{2}+1})_{\infty}}\left(\frac{A}{q}\right)^{\lambda_{0}+\alpha_{1}}x^{\lambda_{0}}\varphi_{2}(x,0,q/(Ax)),
	\end{align}
	with the change of parameters (\ref{changepara2}).
\end{rem}
\begin{rem}\label{remE3toE2}
	We can also derive Theorem \ref{thmint2} by taking a limit in Theorem \ref{thmint3}.
	We put $b_{3}=q^{\alpha-1}a_{3}$ and consider the limit $a_{3}\to\infty$.
	In the same way as for Remark \ref{Remdegeeq}, the equation $\mathcal{E}_{3}f(x)=0$ becomes $\mathcal{E}_{2}f(x)=0$ by this limit.
	Also we find easily that the integrand of the integral $\tilde{\varphi}_{3}$ (\ref{tildephi3}) becomes the integrand of $\tilde{\varphi}_{2}$ (\ref{tildephi2}) by the same limit.
	By multiplying the integrand of $\tilde{\varphi}_{2}$ (\ref{tildephi2}) by some pseudo-constant, we get the integral (\ref{phi2}).
\end{rem}
We consider the linear independence of the integral solutions.
If functions $f(x)$ and $g(x)$ are linear dependent over the field of pseudo-constants $K=\{C(x)\mid C(qx)=C(x)\}$ for general parameters, then $f(x)$ and $g(x)$ with special parameters are linearly dependent too.
Therefore it is enough to consider linearly independence for the integrals in some special case.
%Also if non-zero functions $f(x)$ and $g(x)$ satisfy$C_{1}(x)f(x)+C_{2}(x)g(x)=0$, where $C_{1}$, $C_{2}\in K$, then the function $h(x)=f(x)/g(x)$ is apseudo-constant.
%Therefore we wlii consider the ratio of the integrals.
\begin{prop}\label{lemlin}
	Suppose $a_{i}\neq a_{j}$, $b_{i}\neq b_{j}$ $(i\neq j)$.
	\begin{itemize}
		\item[(1)] Let $\tau_{i}\in\{q/a_{1},q/a_{2},q/a_{3},q/(Ax)\}$ for $i=1,\ldots,4$.
		If $\{\tau_{1},\tau_{2}\}\neq\{\tau_{3},\tau_{4}\}$, then the integrals $\varphi_{3}(x,\tau_{1},\tau_{2})$ and $\varphi_{3}(x,\tau_{3},\tau_{4})$, defined by (\ref{phi3}), are linearly independent over the field of pseudo-constants $K$.
		\item[(2)] Let $\sigma_{i}\in\{b_{1},b_{2},b_{3},Bx\}$ for $i=1,\ldots,4$.
		If $\{\sigma_{1},\sigma_{2}\}\neq \{\sigma_{3},\sigma_{4}\}$, then the integrals $\tilde{\varphi}_{3}(x,\sigma_{1},\sigma_{2})$ and $\tilde{\varphi}_{3}(x,\sigma_{3},\sigma_{4})$, defined by (\ref{tildephi3}), are linearly independent over $K$.
		\item[(3)] Let $a_{0}=Ax$ and $b_{0}=Bx$.
		If $\{i,j\}\neq \{k,l\}$ $(0\leq i,j,k,l\leq 3)$, then the integrals $\varphi_{3}(x,q/a_{i},q/a_{j})$ and $\tilde{\varphi}_{3}(x,b_{k},b_{l})$ are linearly independent over $K$.
	\end{itemize}
\end{prop}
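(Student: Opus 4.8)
The plan is to use that $\mathcal{E}_{3}f(x)=0$ is a second-order $q$-difference equation, so that $\mathrm{Sol}(\mathcal{E}_{3})$ is two-dimensional over the field of pseudo-constants $K$. At $x=0$ the double point of Figure \ref{conE3} with characteristic roots $1,q$ provides a non-logarithmic local basis $u_{0}(x)=1+O(x)$ and $u_{1}(x)=x(1+O(x))$; these are $K$-linearly independent (a pseudo-constant cannot equal $u_{1}/u_{0}=x(1+O(x))$), so they form a $K$-basis of $\mathrm{Sol}(\mathcal{E}_{3})$. Every integral of Theorem \ref{thmint3} is therefore a genuine $\mathbb{C}$-combination $c_{0}u_{0}+c_{1}u_{1}$, and I claim that two solutions $f=c_{0}u_{0}+c_{1}u_{1}$ and $g=d_{0}u_{0}+d_{1}u_{1}$ are $K$-linearly dependent iff $c_{0}d_{1}-c_{1}d_{0}=0$: if $Cf+Dg=0$ with $C,D\in K$, then $K$-independence of $u_{0},u_{1}$ forces $\begin{pmatrix}c_{0}&d_{0}\\ c_{1}&d_{1}\end{pmatrix}\begin{pmatrix}C\\ D\end{pmatrix}=0$, which has only the trivial solution exactly when the determinant is nonzero. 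Thus the whole proposition reduces to showing that the relevant $2\times2$ matrices of leading Taylor data $(c_{0},c_{1})$ at $x=0$ are non-singular.

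Here I would invoke the reduction stated just before the proposition: the determinant is a function of $(A,B,a_{i},b_{i})$, so if it vanished identically it would vanish for every admissible value, and hence exhibiting one admissible specialization where it is nonzero proves non-vanishing for general parameters. For the building blocks $\Phi_{i}:=\varphi(x,q/a_{i})$ with $i\in\{1,2,3\}$ the $q$-binomial expansion already performed in the proof of Lemma \ref{lemintcalcu} presents $\Phi_{i}$ as an explicit convergent power series in $x$, so with $R(t)=\tfrac{(a_{1}t,a_{2}t,a_{3}t)_{\infty}}{(b_{1}t,b_{2}t,b_{3}t)_{\infty}}$ the first two coefficients are the Jackson sums $c_{0}(q/a_{i})=(1-q)\sum_{n\ge0}R(q^{n+1}/a_{i})\,q^{n+1}/a_{i}$ and $c_{1}(q/a_{i})=(B-A)\sum_{n\ge0}R(q^{n+1}/a_{i})\,(q^{n+1}/a_{i})^{2}$. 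Each $\varphi_{3}(x,q/a_{i},q/a_{j})=\Phi_{j}-\Phi_{i}$ then has leading data $\bigl(c_{0}(q/a_{j})-c_{0}(q/a_{i}),\,c_{1}(q/a_{j})-c_{1}(q/a_{i})\bigr)$, and part (1) follows by checking in a specialization that distinct index-pairs give non-proportional vectors; part (2) is identical using the expansion of $\tilde{\varphi}(x,\sigma)$ from Lemma \ref{lemintcalcu2}.

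The main obstacle is that the remaining two building blocks, $\varphi(x,q/(Ax))$ and the companion $\tilde{\varphi}(x,\ldots,Bx)$, are not holomorphic at $x=0$: as (\ref{Axint}) shows, their natural series are Laurent series in $1/x$ convergent near $x=\infty$ rather than near $x=0$. To put them on the same footing I would either (a) repeat the leading-coefficient analysis at the other fixed point $x=\infty$, using the local basis $v_{\lambda}\sim x^{\lambda}$, $v_{\lambda-1}\sim x^{\lambda-1}$ attached to the double point $B/A,Bq^{-1}/A$ there, and transport the data to $x=0$ through the generically invertible connection matrix between the two bases; or, more economically, (b) choose the specialization so that these integrals also collapse, e.g. forcing some $a_{i}/b_{i}\in q^{\mathbb{Z}_{\le0}}$ so the inner $q$-binomial series terminate and every $\varphi_{3}$, $\tilde{\varphi}_{3}$ becomes an explicit finite product with transparent determinants. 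This uniform control of the endpoint $q/(Ax)$ (respectively $Bx$), which alone couples the $x$-dependence of the integrand to the integration variable, is the delicate point of the argument.

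For part (3) the extra subtlety is the prefactor $x^{\lambda}$ with $\lambda\notin\mathbb{Z}$: the pseudo-constant $\tilde{C}$ used to pass from $\varphi$ to $\tilde{\varphi}$ relates $\tilde{\varphi}_{3}(x,b_{k},b_{l})$ to the $\varphi$-integral with the roles of the $a$- and $b$-parameters interchanged, so that with $a_{0}=Ax$, $b_{0}=Bx$ the index sets $\{i,j\}$ and $\{k,l\}$ become directly comparable; this is precisely why the case $\{i,j\}=\{k,l\}$, in which the two integrals are $K$-proportional, must be excluded. For $\{i,j\}\neq\{k,l\}$ I would again compare the $(c_{0},c_{1})$ data at $x=0$, using (a) or (b) for any block built from $q/(Ax)$ or $Bx$, and verify non-proportionality in the specialization. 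Throughout, the hypotheses $a_{i}\neq a_{j}$, $b_{i}\neq b_{j}$ and $B/A\notin q^{\mathbb{Z}}$ are what prevent the Jackson sums defining the leading coefficients from accidentally vanishing or coinciding, and hence are what ultimately make every determinant nonzero.
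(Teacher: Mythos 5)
Your reduction lemma (two solutions meromorphic at $x=0$ are $K$-dependent iff the determinant of their leading Taylor data vanishes) and your appeal to the generic-to-special parameter reduction are both sound, and they mirror the paper's starting point. But the proposal has a genuine gap at precisely the step you yourself call ``the delicate point'', and it is not a detail: the integrals built from the endpoint $q/(Ax)$ (and, in parts (2)--(3), the $\tilde{\varphi}_{3}$ with endpoints $b_{i}$) are given by expansions at $x=\infty$; their continuations have poles at $x=b_{i}q^{k}/A$ accumulating at $0$, so they are not meromorphic at $x=0$, have no $(c_{0},c_{1})$ data there, and your determinant criterion simply does not apply to them. Of your two proposed repairs, route (a) fails as described: the connection matrix between local solution bases at $x=0$ and $x=\infty$ of a $q$-difference equation has pseudo-constant (elliptic) entries, not constant ones, so a $\mathbb{C}$-combination of the basis at $\infty$ is only a $K$-combination of $u_{0},u_{1}$; ``generic invertibility'' therefore gives you no $\mathbb{C}$-valued data at $0$ to put into a determinant, and deciding $K$-proportionality this way is exactly the hard connection problem that the paper explicitly leaves open in its final section. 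Route (b) can be made to work, but nothing along it is actually carried out: no specialization is fixed, no determinant or non-proportionality is verified, so parts (1)--(3) remain unproved.

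For comparison, the paper's proof is route (b) pushed to its simplest instance, after which your local-basis machinery becomes unnecessary. It specializes $a_{i}=b_{i}$ (hence $A=q^{2}B$), under which the $t$-only factors of the integrand cancel, the $x$-dependent factor becomes $1/\bigl((1-Bxt)(1-qBxt)\bigr)$, and every Jackson integral --- including those with the $x$-dependent endpoints $q/(Ax)$ and $Bx$ --- telescopes to an explicit rational function,
\begin{align*}
\varphi_{3}(x,\tau_{1},\tau_{2})=\frac{(1-q)(\tau_{2}-\tau_{1})}{(1-Bx\tau_{1})(1-Bx\tau_{2})},\qquad
\tilde{\varphi}_{3}(x,\sigma_{1},\sigma_{2})=\frac{(1-q)(qB)^{2}(\sigma_{2}-\sigma_{1})}{(qBx-\sigma_{1})(qBx-\sigma_{2})}.
\end{align*}
The hypotheses $a_{i}\neq a_{j}$, $b_{i}\neq b_{j}$ guarantee that distinct endpoint pairs yield non-proportional rational functions, so each relevant ratio is a non-constant rational function of $x$, and no non-constant rational function can satisfy $C(qx)=C(x)$; hence no nontrivial relation over $K$ exists, and all three parts follow at once. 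If you want to complete your write-up, the shortest fix is to adopt this specialization inside your route (b) and replace the leading-coefficient determinants by this ``non-constant rational function is not a pseudo-constant'' observation.
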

\begin{proof}
	We consider the special case $a_{i}=b_{i}$ for $i=1$, 2, 3.
	Because of the condition $Aa_{1}a_{2}a_{3}=q^{2}Bb_{1}b_{2}b_{3}$,  we have $A=q^{2}B$.
	By a simple calculation, we have
	\begin{align}
		\label{spephi3}&\varphi_{3}(x,\tau_{1},\tau_{2})=\int_{\tau_{1}}^{\tau_{2}}\frac{(q^{2}Bxt)_{\infty}}{(Bxt)_{\infty}}d_{q}t=\frac{(1-q)(\tau_{2}-\tau_{1})}{(1-Bx\tau_{1})(1-Bx\tau_{2})},\\
		\label{spetildephi3}&\tilde{\varphi}_{3}(x,\sigma_{1},\sigma_{2})=x^{-2}\int_{\sigma_{1}}^{\sigma_{2}}\frac{(qs/(Bx))_{\infty}}{(qs/(q^{2}Bx))_{\infty}}d_{q}s=\frac{(1-q)(qB)^{2}(\sigma_{2}-\sigma_{1})}{(qBx-\sigma_{1})(qBx-\sigma_{2})}.
	\end{align} 
	We prove only the case (1).
	
	We assume $C_{1}(x)\varphi_{3}(x,\tau_{1},\tau_{2})+C_{2}(x)\varphi_{3}(x,\tau_{3},\tau_{4})=0$, where $C_{1}(x)$, $C_{2}(x)\in K$.
	By (\ref{spephi3}) and the conditions for $\tau_{i}$, the function $\varphi_{3}(x,\tau_{1},\tau_{2})/\varphi_{3}(x,\tau_{3},\tau_{4})$ is a non-constant rational function.
	Since a non-constant rational function is not a pseudo-constant, we get $C_{1}=C_{2}=0$.

	The cases (2) and (3) are proved by the same way as the case (1).
	More precisely, (2) is proved by considering the ratio of the integrals (\ref{spetildephi3}), and (3) is proved by considering the ratio of the integrals (\ref{spephi3}) and (\ref{spetildephi3}).
\end{proof}
\begin{rem}\label{remintdep}
	In (3) of Proposition \ref{lemlin}, we consider the case $(i,j)=(k,l)$.
	Then the ratio of integrals is a constant.
	Therefore we cannot determine whether the integrals $\varphi_{3}(x,q/a_{i},q/a_{j})$ and $\tilde{\varphi}_{3}(x,b_{i},b_{j})$ are linearly independent by the above method.
\end{rem}

\section{Series solutions}\label{secser}
In this section, we present solutions for the variant of the $q$-hypergeometric solutions of degree three by the very-well-poised-balanced $q$-hypergeometric series ${}_{8}W_{7}$.
In section \ref{secint}, we obtain integral solutions for variants of the $q$-hypergeometric equation.
The ${}_{8}W_{7}$ solutions are obtained by transforming the integral.
In this section, we use some formulas for the $q$-hypergeometric series without proofs.
Those formulas are summarized in \cite{GR}.
\begin{defn}[\cite{GR}, (2.1.11)]
	The very-well-poised $q$-hypergeometric series is defined as follows:
	\begin{align}
		{}_{r+1}W_{r}(a_{1}; a_{4},\ldots, a_{r+1};z)=\sum_{n=0}^{\infty}\frac{1-a_{1}q^{2n}}{1-a_{1}}\frac{(a_{1},a_{4},\ldots,a_{r+1})_{n}}{(q,qa_{1}/a_{4},\ldots,qa_{1}/a_{r+1})_{n}}z^{n}.
	\end{align}
	This series converges for $|z|<1$, and is called balanced if $\displaystyle z=\frac{(\pm (a_{1}q)^{1/2})^{r-3}}{a_{4}a_{5}\cdots a_{r+1}}$.
\end{defn}
\begin{rem}
	The very-well-poised $q$-hypergeometric series ${}_{r+1}W_{r}$ is rewritten as follows:
	\begin{align}
		{}_{r+1}W_{r}(a_{1}; a_{4},\ldots, a_{r+1};z)={}_{r+1}\varphi_{r}\left(\begin{array}{c}
			a_{1},qa_{1}^{1/2},-qa_{1}^{1/2},a_{4},\ldots,a_{r+1}\\
			a_{1}^{1/2},-a_{1}^{1/2},qa_{1}/a_{4},\ldots,qa_{1}/a_{r+1}
		\end{array};z\right).
	\end{align}
\end{rem}
We introduce some properties of the very-well-poised-balanced ${}_{8}W_{7}$ series without proofs.
For proofs and more details, see section 2 of \cite{GR}.
\begin{lemm}[\cite{GR}, (2.10.19)]\label{serlemint}
	If $cd=abefgh$ and $|ah|<1$, then we have
	\begin{align}
		\notag&\int_{a}^{b}\frac{(qt/a,qt/b,ct,dt)_{\infty}}{(et,ft,gt,ht)_{\infty}}d_{q}t\\
		=&b(1-q)\frac{(q,bq/a,a/b,cd/(eh),cd/(fh),cd/(gh),bc,bd)_{\infty}}{(ae,af,ag,be,bf,bg,bh,bcd/h)_{\infty}}
		{}_{8}W_{7}\left(\frac{bcd}{hq};be,bf,bg,\frac{c}{h},\frac{d}{h};ah\right).\label{inttoser}
	\end{align}
\end{lemm}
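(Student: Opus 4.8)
The plan is to evaluate the left-hand side directly from the definition of the Jackson integral and to recognise the outcome as a sum of two balanced ${}_4\varphi_3$ series, which is then contracted to a single very-well-poised ${}_8W_7$ by the nonterminating Watson transformation recorded in \cite{GR}.

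First I would split the integral as $\int_a^b=\int_0^b-\int_0^a$ by Definition \ref{defJac} and expand each piece by $\int_0^\tau f\,d_qt=(1-q)\sum_{n\ge0}f(\tau q^n)\tau q^n$. Writing the integrand as $g(t)$ and using $(xq^n)_\infty=(x)_\infty/(x)_n$, the value at $t=bq^n$ factorises as a constant times $\frac{(be,bf,bg,bh)_n}{(q,qb/a,bc,bd)_n}q^n$, so the $\int_0^b$-contribution equals
\[
(1-q)\,b\,\frac{(qb/a,q,bc,bd)_\infty}{(be,bf,bg,bh)_\infty}\,{}_4\varphi_3\!\left(\begin{array}{c}be,bf,bg,bh\\ qb/a,bc,bd\end{array};q,q\right),
\]
and the $\int_0^a$-contribution is the same expression with $a$ and $b$ interchanged. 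The hypothesis $cd=abefgh$ is exactly the balancing condition $(qb/a)(bc)(bd)=q\,(be)(bf)(bg)(bh)$ for this ${}_4\varphi_3$ (and its $a\leftrightarrow b$ analogue), so both series are balanced.

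It then remains to combine these two balanced ${}_4\varphi_3$'s. The nonterminating Watson transformation [\cite{GR}, (III.36)] expresses an ${}_8W_7$ as a weighted sum of two ${}_4\varphi_3$'s that differ by precisely such a parameter swap; reading that identity backwards, with first parameter $bcd/(hq)$, remaining parameters $be,bf,bg,c/h,d/h$, and argument $ah$, collapses the difference of the two endpoint contributions into the single ${}_8W_7$ on the right-hand side, the leftover infinite products assembling into the stated prefactor. As a consistency check, the very-well-poised balance of this ${}_8W_7$ forces its argument to be $z=\frac{cd}{befg}$, which equals $ah$ precisely by $cd=abefgh$, matching the convergence hypothesis $|ah|<1$. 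The main obstacle is the bookkeeping: one must track all eight infinite $q$-shifted factorials through the transformation and, in particular, confirm that the sign and the theta-type factors $(a/b)_\infty$, $(bq/a)_\infty$ arising from the difference $\int_0^b-\int_0^a$ reassemble correctly, so that the two ${}_4\varphi_3$ terms land on the two terms of Watson's formula with the right relative coefficient.
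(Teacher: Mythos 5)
Your proposal is correct, but note that the paper itself offers no proof of this lemma: it is stated as a direct quotation of \cite{GR}, (2.10.19), and the surrounding text explicitly defers all proofs of the ${}_{8}W_{7}$ properties to section 2 of that book. Your argument is precisely the derivation found there, so the only thing to check is that your outline actually closes, and it does. The expansion of $\int_{a}^{b}=\int_{0}^{b}-\int_{0}^{a}$ into the two balanced ${}_{4}\varphi_{3}$ series is as you state, and applying the nonterminating Watson transformation (\cite{GR}, (III.36)) with parameters $\alpha=bcd/(hq)$, $(\beta,\gamma,\delta,\epsilon,\zeta)=(c/h,d/h,be,bf,bg)$ and argument $ah$ reproduces exactly those two series: the first ${}_{4}\varphi_{3}$ in (III.36) has upper parameters $bh,be,bf,bg$ and lower parameters $bc$, $bd$, $\delta\epsilon\zeta/\alpha$, and $\delta\epsilon\zeta/\alpha=qb^{2}efgh/(cd)=qb/a$ by the hypothesis $cd=abefgh$, while the second ${}_{4}\varphi_{3}$ is the $a$-series with lower parameters $ac$, $ad$, $qa/b$. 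The bookkeeping you flagged as the main obstacle works out as follows: multiplying the asserted prefactor of the right-hand side of (\ref{inttoser}) by the two coefficients appearing in (III.36), the infinite products cancel completely against the coefficient of the $b$-series, and matching the coefficient of the $a$-series reduces to the single identity $b\,(bq/a)_{\infty}(a/b)_{\infty}=-a\,(qa/b)_{\infty}(b/a)_{\infty}$, which follows from $(x)_{\infty}=(1-x)(qx)_{\infty}$ applied at $x=a/b$ and $x=b/a$ together with $b(1-a/b)=-(a)(1-b/a)$; this is exactly the sign needed to convert the sum of two terms in (III.36) into the difference $\int_{0}^{b}-\int_{0}^{a}$. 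So your proof is complete modulo that routine verification, and it coincides with the argument in the cited source rather than with anything written in the paper.
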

\begin{lemm}[\cite{GR}, (2.10.1)]\label{serlemtrans}
	Let $\mu=qa^{2}/(bcd)$. We have
	\begin{align}\label{8W7trans}
		{}_{8}W_{7}\left(a;b,c,d,e,f;\frac{a^{2}q^{2}}{bcdef}\right)=\frac{(aq,aq/(ef),\mu q/e,\mu q/f)_{\infty}}{(aq/e,aq/f,\mu q,\mu q/(ef))_{\infty}}{}_{8}W_{7}\left(\mu;\frac{\mu b}{a},\frac{\mu c}{a},\frac{\mu d}{a}, e,f;\frac{aq}{ef}\right),
	\end{align}
	if $\max(|aq/(ef)|, |\mu q/(ef)|)<1$.
\end{lemm}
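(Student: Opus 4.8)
The plan is to obtain (\ref{8W7trans}) directly from the $q$-integral evaluation of Lemma \ref{serlemint}, exploiting the permutation symmetry of its left-hand side. In the notation of that lemma the integrand $(qt/a,qt/b,ct,dt)_{\infty}/(et,ft,gt,ht)_{\infty}$ is invariant under any permutation of the four ``denominator'' parameters $e,f,g,h$, so the value of the Jackson integral $\int_{a}^{b}$ does not depend on which of them we single out. On the right-hand side of Lemma \ref{serlemint}, by contrast, one denominator parameter plays a distinguished role: $h$ appears in the base $bcd/(hq)$, in the entries $c/h,d/h$, and in the argument $ah$, whereas $e,f,g$ enter only through $be,bf,bg$. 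Evaluating the one integral twice, singling out $h$ in one evaluation and $g$ in the other, therefore forces two a priori different very-well-poised ${}_{8}W_{7}$ series to be equal up to the ratio of the two explicit infinite-product prefactors. A key feature of this route is that it produces a nonterminating identity at once, so no analytic-continuation step is needed.

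Concretely, I would first cancel the common integral and record the resulting identity
\[
\Pi_{h}\, {}_{8}W_{7}\!\left(\tfrac{bcd}{hq};be,bf,bg,\tfrac{c}{h},\tfrac{d}{h};ah\right)
=\Pi_{g}\, {}_{8}W_{7}\!\left(\tfrac{bcd}{gq};be,bf,bh,\tfrac{c}{g},\tfrac{d}{g};ag\right),
\]
where $\Pi_{h}$ and $\Pi_{g}$ denote the two product factors furnished by Lemma \ref{serlemint}. Both series here are balanced, since the constraint $cd=abefgh$ forces the arguments $ah$ and $ag$ to equal the balanced values. The next step is bookkeeping: I would set up a dictionary expressing the six parameters of (\ref{8W7trans}) and the auxiliary $\mu$ as monomials in the eight integral parameters $a,b,c,d,e,f,g,h$ — matching the base $bcd/(hq)$ with the base of the left-hand ${}_{8}W_{7}$ of (\ref{8W7trans}), the unordered set $\{be,bf,bg,c/h,d/h\}$ with its five lower parameters, and likewise for the right-hand side — and then verify that, under this dictionary and the relation $cd=abefgh$, the displayed identity collapses to (\ref{8W7trans}).

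I expect the main obstacle to lie in two pieces of bookkeeping. First, the quotient $\Pi_{h}/\Pi_{g}$ must be simplified, using only $(a)_{\infty}=(1-a)(aq)_{\infty}$-type shifts together with $cd=abefgh$, into precisely the product quotient displayed in (\ref{8W7trans}); this is where the balancing condition does the real work, and the many cancellations have to be tracked with care. Second, I would have to confirm that the convergence hypotheses are compatible: the two evaluations require $|ah|<1$ and $|ag|<1$, and under the dictionary these should translate into the two conditions $|aq/(ef)|<1$ and $|\mu q/(ef)|<1$ imposed in (\ref{8W7trans}).

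Should the exact normalization prove hard to reach this way, I would fall back on the classical two-step argument: specialize one parameter to $q^{-n}$ so that both members of (\ref{8W7trans}) terminate, use Watson's transformation to rewrite each terminating ${}_{8}W_{7}$ as a balanced ${}_{4}\varphi_{3}$, apply Sears' symmetry of the balanced ${}_{4}\varphi_{3}$ to identify the two, and finally lift the termination restriction by an analytic-continuation (a $q$-analogue of Carlson's theorem) argument; in that route the nonterminating extension is the delicate step.
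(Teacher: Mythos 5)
The paper never proves this lemma: it is quoted verbatim from \cite{GR}, formula (2.10.1), and the text just above it says these properties of ${}_{8}W_{7}$ are introduced ``without proofs.'' So your proposal can only be measured against the standard proofs, not against an argument in the paper. Your main route is correct, and the bookkeeping you postpone does close. Writing $a',b',\ldots,h'$ for the parameters of Lemma \ref{serlemint}, applying that lemma once as stated and once with $g'$ and $h'$ interchanged (the integrand is symmetric under this swap; the hypotheses of the two applications are $|a'h'|<1$ and $|a'g'|<1$), and cancelling the common infinite products gives
\begin{align*}
&{}_{8}W_{7}\left(\frac{b'c'd'}{h'q};b'e',b'f',b'g',\frac{c'}{h'},\frac{d'}{h'};a'h'\right)\\
&\qquad=\frac{(b'c'd'/h',\,a'g',\,c'd'/(e'g'),\,c'd'/(f'g'))_{\infty}}{(c'd'/(e'h'),\,c'd'/(f'h'),\,b'c'd'/g',\,a'h')_{\infty}}\;
{}_{8}W_{7}\left(\frac{b'c'd'}{g'q};b'e',b'f',b'h',\frac{c'}{g'},\frac{d'}{g'};a'g'\right).
\end{align*}
The dictionary that turns this into (\ref{8W7trans}) is $a=b'c'd'/(h'q)$, $\{b,c,d\}=\{c'/h',d'/h',b'g'\}$, $\{e,f\}=\{b'e',b'f'\}$; then $\mu=qa^{2}/(bcd)=b'c'd'/(g'q)$, the two arguments satisfy $a^{2}q^{2}/(bcdef)=a'h'$ and $aq/(ef)=a'g'$, and the displayed product ratio is exactly $(aq,aq/(ef),\mu q/e,\mu q/f)_{\infty}/(aq/e,aq/f,\mu q,\mu q/(ef))_{\infty}$. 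Conversely every admissible parameter set of (\ref{8W7trans}) is reached: normalize $h'=1$ and solve ($c'=b$, $d'=c$, $b'=aq/(bc)$, $g'=bcd/(aq)$, $e'=bce/(aq)$, $f'=bcf/(aq)$, $a'=a^{2}q^{2}/(bcdef)$, with $c'd'=a'b'e'f'g'h'$ then automatic). The convergence hypotheses translate exactly as you predicted, since $\mu q/(ef)=a^{2}q^{2}/(bcdef)$; the only residue is a genericity restriction (parameters avoiding poles of the two evaluations), removed by analytic continuation in the parameters, which is harmless.

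The one thing you must flag is logical order. In \cite{GR} the integral evaluation of Lemma \ref{serlemint} ((2.10.19) there) is not independent of (2.10.1): it appears at the end of Section 2.10 and is obtained from the nonterminating Watson-type expansion of ${}_{8}W_{7}$ into two balanced ${}_{4}\varphi_{3}$ series, which in turn rests on (2.10.1); formula (2.10.1) itself is proved first, by a double-series rearrangement using the nonterminating ${}_{6}\varphi_{5}$ summation. So your argument runs the source's logic backwards: within this paper, where Lemma \ref{serlemint} is imported as a black box, the implication is perfectly legitimate, but it is not an independent proof of the lemma, and you should say so. If a self-contained proof is wanted, your fallback is the right one: set $f=q^{-n}$ so both sides terminate, use Watson's transformation and Sears' ${}_{4}\varphi_{3}$ symmetry, and then remove the termination by analytic continuation in $1/f$ from the sequence $1/f=q^{n}$ accumulating at $0$ (Ismail's argument); as you note, that continuation is the only delicate step.
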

By applying Lemma \ref{serlemint} to the integral (\ref{phi3}), % and (\ref{tildephi3}), 
we obtain series solutions for the equation (\ref{defE3}).
\begin{thm}\label{thmser3}
%	Let $(i,j,k)$ and $(i',j',k')$ be permutations of $(1,2,3)$.
	If $a_{1}a_{2}a_{3}A=q^{2}b_{1}b_{2}b_{3}B$, then the functions
	\begin{align}
		\label{sersol1}&\frac{(Axq/a_{3})_{\infty}}{(Bxq/a_{3})_{\infty}}{}_{8}W_{7}\left(\frac{a_{2}A}{a_{3}B}; \frac{qb_{1}}{a_{3}}, \frac{qb_{2}}{a_{3}}, \frac{qb_{3}}{a_{3}}, \frac{a_{2}}{Bx}, \frac{A}{B}; \frac{qBx}{a_{1}}\right),\\
		\label{sersol1h}&\frac{(a_{3}Ax/(b_{1}b_{3}),a_{3}Ax/(b_{2}b_{3}),qAx/a_{2})_{\infty}}{(qBx/a_{1},qBx/a_{2},qa_{3}Ax/(a_{2}a_{3}))_{\infty}}{}_{8}W_{7}\left(\frac{a_{3}Ax}{a_{2}b_{3}};\frac{qBx}{a_{2}},\frac{qb_{1}}{a_{2}},\frac{qb_{2}}{a_{2}},\frac{a_{3}}{b_{3}},\frac{Ax}{b_{3}};\frac{qb_{3}}{a_{1}}\right),\\
		&\frac{1}{x}\frac{(qa_{1}/(Ax),Ax/a_{1},a_{2}a_{3}/(b_{3}Bx),qa_{2}/(Ax),qa_{3}/(Ax))_{\infty}}{(qBx/a_{1},qb_{1}/(Ax),qb_{2}/(Ax),qb_{3}/(Ax),qa_{2}a_{3}/(b_{3}Ax))_{\infty}}\notag\\
		&\label{sersol2}\times{}_{8}W_{7}\left(\frac{a_{2}a_{3}}{b_{3}Ax};\frac{qb_{1}}{Ax},\frac{qB}{A},\frac{qb_{2}}{Ax},\frac{a_{2}}{b_{3}},\frac{a_{3}}{b_{3}};\frac{qb_{3}}{a_{1}}\right),\\
		\notag&\frac{1}{x}\frac{(qa_{1}/(Ax),qa_{2}/(Ax),qa_{3}/(Ax),Ax/a_{1},a_{2}a_{3}/(b_{1}Bx),a_{2}a_{3}/(b_{2}Bx),a_{2}a_{3}/(b_{3}Bx))_{\infty}}{(qb_{1}/(Ax),qb_{2}/(Ax),qb_{3}/(Ax),qa_{2}a_{3}/(ABx^{2}))_{\infty}}\\
		\label{sersol2h}&\times{}_{8}W_{7}\left(\frac{a_{2}a_{3}}{ABx^{2}};\frac{qb_{1}}{Ax},\frac{qb_{2}}{Ax},\frac{qb_{3}}{Ax},\frac{a_{2}}{Bx},\frac{a_{3}}{Bx};\frac{qBx}{a_{1}}\right),\\
		\label{sersol5}&\frac{(qAx/a_{1},a_{1}/(Ax),a_{2}a_{3}/(b_{3}Bx))_{\infty}}{(qb_{1}/(Ax),qb_{2}/(Ax),qBx/a_{1})_{\infty}}{}_{8}W_{7}\left(\frac{a_{2}a_{3}}{a_{1}b_{3}};\frac{qBx}{a_{1}},\frac{qb_{1}}{a_{1}},\frac{qb_{2}}{a_{1}},\frac{a_{2}}{b_{3}},\frac{a_{3}}{b_{3}};\frac{qb_{3}}{Ax}\right),\\
		&\frac{(qAx/a_{1},a_{1}/(Ax),a_{2}a_{3}/(b_{1}Bx),a_{2}a_{3}/(b_{2}Bx))_{\infty}}{(qb_{1}/(Ax),qb_{2}/(Ax),qb_{3}/(Ax),qBx/a_{1},qa_{2}a_{3}/(a_{1}Bx))_{\infty}}\notag\\
		&\label{sersol5h}\times{}_{8}W_{7}\left(\frac{a_{2}a_{3}}{a_{1}Bx};\frac{qb_{1}}{a_{1}},\frac{qb_{2}}{a_{1}},\frac{qb_{3}}{a_{1}},\frac{a_{2}}{Bx},\frac{a_{3}}{Bx};\frac{qB}{A}\right),\
	\end{align}
	satisfy the equation $\mathcal{E}_{3}f(x)=0$ {(\ref{defE3})}.
	%Here, $q^{\lambda}=B/A$.
\end{thm}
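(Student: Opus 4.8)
The plan is to start from the integral solutions of Theorem~\ref{thmint3} and rewrite them as ${}_8W_7$ series by means of the evaluation formula in Lemma~\ref{serlemint}. By Theorem~\ref{thmint3}, for any choice of endpoints $\tau_1,\tau_2\in\{q/a_1,q/a_2,q/a_3,q/(Ax)\}$ the integral $\varphi_3(x,\tau_1,\tau_2)$ of (\ref{phi3}) satisfies $\mathcal{E}_3f(x)=0$. Since multiplying a solution by a factor independent of $x$ again yields a solution, it suffices to show that each function listed in the theorem equals such a $\varphi_3(x,\tau_1,\tau_2)$ up to a constant multiple, and this is exactly what Lemma~\ref{serlemint} delivers.

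First I would match the integrand. The integrand of (\ref{phi3}) is $\frac{(Axt,a_1t,a_2t,a_3t)_\infty}{(Bxt,b_1t,b_2t,b_3t)_\infty}$, which is of the shape $\frac{(qt/a,qt/b,ct,dt)_\infty}{(et,ft,gt,ht)_\infty}$ of Lemma~\ref{serlemint} as soon as the two integration endpoints $a=\tau_1$ and $b=\tau_2$ are taken from $\{q/a_1,q/a_2,q/a_3,q/(Ax)\}$: then $qt/a$ and $qt/b$ reproduce two of the four numerator factors $Axt,a_1t,a_2t,a_3t$, the remaining two numerator factors are identified with $ct,dt$, and the four denominator factors $Bxt,b_1t,b_2t,b_3t$ are identified with $et,ft,gt,ht$ in some order. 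The crucial check is the balancing condition $cd=abefgh$ required by Lemma~\ref{serlemint}; a direct computation shows that for every admissible assignment this condition reduces precisely to the hypothesis $a_1a_2a_3A=q^2b_1b_2b_3B$, so the lemma applies exactly under the assumption of the theorem.

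Applying Lemma~\ref{serlemint} then expresses $\varphi_3(x,\tau_1,\tau_2)$ as a product of $q$-Pochhammer symbols times ${}_8W_7\!\left(\tfrac{bcd}{hq};be,bf,bg,\tfrac{c}{h},\tfrac{d}{h};ah\right)$. Discarding every Pochhammer factor that does not involve $x$ (which only changes the solution by a nonzero constant) and keeping the $x$-dependent factors, including the leading factor $b$ that produces a $1/x$ when $b=q/(Ax)$, leaves precisely the prefactor and the ${}_8W_7$ series recorded in the statement. The six listed functions arise from the remaining freedom: choosing the unordered endpoint pair $\{q/a_1,q/a_2\}$ yields (\ref{sersol1}) and (\ref{sersol1h}), while choosing $\{q/a_1,q/(Ax)\}$ yields (\ref{sersol2}), (\ref{sersol2h}), (\ref{sersol5}), (\ref{sersol5h}); within each pair the different forms correspond to which of the four denominator factors is placed in the distinguished slot $h$ (appearing in the argument $ah$) and to which endpoint is taken as $a$ rather than $b$, Lemma~\ref{serlemint} being asymmetric in $a$ and $b$. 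Lemma~\ref{serlemtrans} can then be invoked to pass between the resulting ${}_8W_7$ expressions and to display the alternative balanced forms.

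The routine part is the verification for one representative choice, say $\tau_1=q/a_1,\ \tau_2=q/a_2$ with $h=Bx$, $c=a_3$, $d=Ax$, which reproduces (\ref{sersol1}) including the prefactor $\frac{(Axq/a_2)_\infty}{(Bxq/a_2)_\infty}=\frac{(bd)_\infty}{(bh)_\infty}$; the other five cases follow by the same substitution with the bookkeeping altered. The main obstacle is exactly this bookkeeping: for each of the six assignments one must carefully track which Pochhammer factors of the prefactor of Lemma~\ref{serlemint} depend on $x$ and which are constant, and confirm that all six ${}_8W_7$ parameters match term by term. A secondary point requiring care is the domain of validity: Lemma~\ref{serlemint} carries the convergence constraint $|ah|<1$ (and Lemma~\ref{serlemtrans} further inequalities), so the identities first hold on a parameter region and are then extended to general parameters with $q^{\lambda}=B/A\notin q^{\mathbb{Z}}$ by analytic continuation.
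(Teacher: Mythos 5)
Your proposal is correct and follows essentially the same route as the paper: the paper also obtains these series by applying Lemma \ref{serlemint} to the integrals $\varphi_{3}(x,q/a_{1},q/a_{2})$, $\varphi_{3}(x,q/a_{1},q/(Ax))$ and $\varphi_{3}(x,q/(Ax),q/a_{1})$ from Theorem \ref{thmint3}, with the balancing condition $cd=abefgh$ reducing to $a_{1}a_{2}a_{3}A=q^{2}b_{1}b_{2}b_{3}B$, the distinct forms arising exactly from the choices of $a$, $b$ and the distinguished slot $h$, and the $x$-independent Pochhammer factors discarded. Your representative check ($a=q/a_{1}$, $b=q/a_{2}$, $c=a_{3}$, $d=Ax$, $h=Bx$ giving (\ref{sersol1}) with prefactor $(bd)_{\infty}/(bh)_{\infty}$) matches the paper's correspondence.
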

\begin{rem}
	The series (\ref{sersol1}) and (\ref{sersol1h}), (\ref{sersol2}) and (\ref{sersol2h}), (\ref{sersol5}) and (\ref{sersol5h}) correspond to the integrals $\varphi_{3}(x,q/a_{1},q/a_{2})$, $\varphi_{3}(x,q/a_{1},q/(Ax))$, $\varphi_{3}(x,q/(Ax),q/a_{1})$, respectively.
	The right-hand-side of the equation (\ref{inttoser}) in Lemma \ref{serlemint} is symmetric for $e$, $f$, $g$ and also for $c$, $d$.
	Thus the series representations for the integrals changes depending essentially only on the choices of $a$, $b$, $h$.
	In this case, we have two or four representations for each integral.
	
	The solutions that correspond to the integral $\tilde{\varphi}_{3}$ are obtained by acting with $s_{2}$, defined in (\ref{transs2}) below, to the solutions in Theorem \ref{thmser3}.
\end{rem}
\begin{rem}\label{remW87}
	The very-well-poised-balanced $q$-hypergeometric series ${}_{8}W_{7}$ is called the Askey-Wilson function \cite{KS}.
	%The equations ${}_{8}W_{7}$ satisfies were considered by Ismail and Rahman \cite{IR}.
	{Well known linear $q$-difference equations satisfied by the very-well-poised-balanced ${}_{8}W_{7}$ are the ones considered by Ismail and Rahman \cite{IR}.
	We put $\displaystyle\phi={}_{8}W_{7}\left(a;b,c,d,e,f;\frac{(aq)^{2}}{bcdef}\right)$, and $T_{1}=T_{b}T_{c}^{-1}$, $T_{2}=T_{a}^{2}T_{c}T_{d}T_{e}T_{f}$.
	In \cite{IR}, linear relations for $T_{1}\phi$, $\phi$, $T_{1}^{-1}\phi$, and for $T_{2}\phi$, $\phi$, $T_{2}^{-1}\phi$ were obtained.
	These equations are as follows:
	\begin{align}
		&\left[(b-c)\left(1-\frac{aq}{bc}\right)\left(1-\frac{a^{2}q^{2}}{bcdef}\right)+L+M\right]\phi\notag\\
		&=\frac{(1-c)(1-a/c)}{(1-b/q)(1-aq/b)}T_{1}^{-1}\phi+\frac{(1-b)(1-a/b)}{(1-c/q)(1-aq/c)}T_{1}\phi,\\
		\notag&\Biggl[\frac{b(1-a/(bc))(1-a/(bd))(1-a/(be))(1-a/(bf))}{(1-a/b)(1-a/(bq))}\\
		\notag&+\frac{a^{2}(1-c)(1-d)(1-e)(1-f)q}{(1-a/b)(1-aq/b)bcdef}+(1-b)\left(1-\frac{a^{2}q}{bcdef}\right)\Biggr]\phi\\
		\notag&=\frac{(1-a/c)(1-a/d)(1-a/e)(1-a/f)}{(1-a)(1-aq)}T_{2}^{-1}\phi\\
		&+ \frac{a^{2}q(1-c)(1-d)(1-e)(1-f)}{cdef}\notag\\
		&\times\frac{(aq)_{2}(1-aq/(bc))(1-aq/(bd))(1-aq/(be))(1-aq/(bf))}{(a/b)_{2}(aq/b)_{2}(1-aq/c)(1-aq/d)(1-aq/e)(1-aq/f)}T_{2}\phi,
	\end{align}
	where
	\begin{align}
		&L=\frac{c(1-c/q)(1-aq/(cd))(1-aq/(ce))(1-aq/(cf))}{b-c/q},\\ &M=\frac{b(1-b/q)(1-aq/(bd))(1-aq/(be))(1-aq/(bf))}{c-b/q}.
	\end{align}
	}On the other hand, we find that the function ${}_{8}W_{7}$ satisfies  the equation $\mathcal{H}_{3}f(x)=0$ under some gauge transformation.
	Our result gives linear relations for $T_{3}\phi$, $\phi$, $T_{3}^{-1}\phi$, and for $T_{4}\phi$, $\phi$, $T_{4}^{-1}\phi$, and $T_{5}\phi$, $\phi$, $T_{5}^{-1}\phi$.
	Here, $T_{3}=T_{b}$, $T_{4}=T_{a}T_{b}T_{c}$, and $T_{5}=T_{a}^{2}T_{b}T_{c}T_{d}T_{e}T_{f}$.
	Therefore we find that the equation $\mathcal{H}_{3}f(x)=0$ can be regarded as a contiguity relation for the Askey-Wilson function ${}_{8}W_{7}$.
	We note that the relation for  $T_{3}\phi$, $\phi$, $T_{3}^{-1}\phi$ is discussed in \cite{GM}
	
	In \cite{HMST}, it was mentioned that {some} solution for the equation $\mathcal{H}_{2}f(x)=0$ is related to the big $q$-Jacobi polynomial.
	{The big $q$-Jacobi polynomial can be obtained from the Askey-Wilson polynomial by taking some limit.
	In addition, the little $q$-Jacobi polynomial, which is related to Heine's $q$-hypergeometric equation, can be obtained from the big $q$-Jacobi polynomial.
	Degenerations of orthogonal polynomials are summarized in the Askey scheme \cite{KLS}.
	%The degeneration $\mathcal{H}_{3}\to\mathcal{H}_{2}\to\mathcal{H}_{1}$ would be related to (a part of) the Askey scheme.}
	We find that the degeneration $\mathcal{H}_{3}\to\mathcal{H}_{2}\to\mathcal{H}_{1}$ is related to (a part of) the Askey scheme.}
\end{rem}
\begin{rem}
	Taking the limit $q\to1$, we have
	\begin{align}
		&\frac{(q^{\alpha}X)_{\infty}}{(q^{\beta}X)_{\infty}}\to(1-X)^{\beta-\alpha},\ \ (q^{\alpha}X)_{n}\to(1-X)^{n},\\
		&\frac{(q^{\alpha})_{n}}{(1-q)^{n}}\to\alpha(\alpha+1)\cdots(\alpha+n-1).
	\end{align}
	Thus, by taking $q\to1$ with the change of parameters (\ref{changepara}), some of the solutions in Theorem \ref{thmser3} formally become the Gauss hypergeometric function:
	\begin{align}
		x^{\nu-\alpha}(x-t_{1})^{\mu_{1}}(x-t_{2})^{\mu_{2}}(x-t_{3})^{\mu_{3}}{}_{2}F_{1}\left(\begin{array}{c}
			\nu_{1},\nu_{2}\\
			\nu_{3}
		\end{array};\frac{x-t_{i}}{x-t_{j}}\frac{t_{k}-t_{j}}{t_{k}-t_{i}}\right).
	\end{align}
	Here, $\mu_{i}$, $\nu_{j}$ are suitable parameters determined by $\nu$, $\alpha$, $l_{i}$, $h_{i}$.
	In this sense, some of the solutions in Theorem \ref{thmser3} are $q$-analogs of series solutions of Riemann-Papperitz equation {(\ref{RiePapser})}.
\end{rem}
The equation $\mathcal{E}_{3}f(x)=0$ has some symmetries.
We {recall} the configuration of $\mathcal{E}_{3}f(x)=0$, that is
\begin{center}
	\begin{tikzpicture}
		%\draw [help lines] (0,0) grid (10,6);
		\draw (1,0)--(1,6);
		\draw [below] (1,0) node {$x=0$};
		\draw (9,0)--(9,6);
		\draw [below] (9,0) node {$x=\infty$};
		\draw (0,1)--(10,1);
		\draw [right] (10,1) node {$T_{x}=0$};
		\draw (0,5)--(10,5);
		\draw [right] (10,5) node {$T_{x}=\infty$};
		\filldraw (1,3) circle [radius=0.05];
		\draw [left] (1,3) node {$1,q$};
		\draw (1,3) circle [radius=0.1];
		%\draw [left] (1,3) node {$q^{\mu_{+}}$};
		\filldraw (9,3) circle [radius=0.05];
		\draw (9,3) circle [radius=0.1];
		\draw [right] (9,3) node {$B/A,Bq^{-1}/A$};
		\filldraw (2,1) circle [radius=0.05];
		\draw [below] (2,1) node{$a_{1}/B$};
		\filldraw (5,1) circle [radius=0.05];
		\draw [below] (5,1) node{$a_{2}/B$};
		\filldraw (8,1) circle [radius=0.05];
		\draw [below] (8,1) node{$a_{3}/B$};
		\filldraw (2,5) circle [radius=0.05];
		\draw [above] (2,5) node{$b_{1}/A$};
		\filldraw (5,5) circle [radius=0.05];
		\draw [above] (5,5) node{$b_{2}/A$};
		\filldraw (8,5) circle [radius=0.05];
		\draw [above] (8,5) node{$b_{3}/A$};
	\end{tikzpicture}.
\end{center}
By the gauge transformation $\displaystyle g(x)=\frac{(qBx/a_{1})_{\infty}}{(Ax/b_{1})_{\infty}}f(x)$, this configuration is transformed to the following configuration:
\begin{center}
	\begin{tikzpicture}
		%\draw [help lines] (0,0) grid (10,6);
		\draw (1,0)--(1,6);
		\draw [below] (1,0) node {$x=0$};
		\draw (9,0)--(9,6);
		\draw [below] (9,0) node {$x=\infty$};
		\draw (0,1)--(10,1);
		\draw [right] (10,1) node {$T_{x}=0$};
		\draw (0,5)--(10,5);
		\draw [right] (10,5) node {$T_{x}=\infty$};
		\filldraw (1,3) circle [radius=0.05];
		\draw [left] (1,3) node {$1,q$};
		\draw (1,3) circle [radius=0.1];
		%\draw [left] (1,3) node {$q^{\mu_{+}}$};
		\filldraw (9,3) circle [radius=0.05];
		\draw (9,3) circle [radius=0.1];
		\draw [right] (9,3) node {$a_{1}/(qb_{1}),a_{1}/(q^{2}b_{1})$};
		\filldraw (2,1) circle [radius=0.05];
		\draw [below] (2,1) node{$qb_{1}/A$};
		\filldraw (5,1) circle [radius=0.05];
		\draw [below] (5,1) node{$a_{2}/B$};
		\filldraw (8,1) circle [radius=0.05];
		\draw [below] (8,1) node{$a_{3}/B$};
		\filldraw (2,5) circle [radius=0.05];
		\draw [above] (2,5) node{$a_{1}/(qB)$};
		\filldraw (5,5) circle [radius=0.05];
		\draw [above] (5,5) node{$b_{2}/A$};
		\filldraw (8,5) circle [radius=0.05];
		\draw [above] (8,5) node{$b_{3}/A$};
	\end{tikzpicture}.
\end{center}
Thus, if a function $f(a_{1},a_{2},a_{3},b_{1},b_{2},b_{3},A,B,x)$ is a solution of $\mathcal{E}_{3}f(x)=0$, then the function $\displaystyle \frac{(Ax/b_{1})_{\infty}}{(qBx/a_{1})_{\infty}}f\left(a_{1},a_{2}\frac{a_{1}A}{qb_{1}B},a_3\frac{a_{1}A}{qb_{1}B},a_{1}\frac{A}{qB},b_{2},b_{3},A,\frac{a_{1}}{qb_{1}}A,x\right)$ satisfies the same equation.
Also by the transformation $\{g(x)=x^{-\lambda}f(x),\ z=x^{-1}\}$, then we get the configuration
\begin{center}
	\begin{tikzpicture}
		%\draw [help lines] (0,0) grid (10,6);
		\draw (1,0)--(1,6);
		\draw [below] (1,0) node {$z=0$};
		\draw (9,0)--(9,6);
		\draw [below] (9,0) node {$z=\infty$};
		\draw (0,1)--(10,1);
		\draw [right] (10,1) node {$T_{z}=0$};
		\draw (0,5)--(10,5);
		\draw [right] (10,5) node {$T_{z}=\infty$};
		\filldraw (1,3) circle [radius=0.05];
		\draw [left] (1,3) node {$1,q$};
		\draw (1,3) circle [radius=0.1];
		%\draw [left] (1,3) node {$q^{\mu_{+}}$};
		\filldraw (9,3) circle [radius=0.05];
		\draw (9,3) circle [radius=0.1];
		\draw [right] (9,3) node {$B/A,Bq^{-1}/A$};
		\filldraw (2,1) circle [radius=0.05];
		\draw [below] (2,1) node{$A/b_{1}$};
		\filldraw (5,1) circle [radius=0.05];
		\draw [below] (5,1) node{$A/b_{2}$};
		\filldraw (8,1) circle [radius=0.05];
		\draw [below] (8,1) node{$A/b_{3}$};
		\filldraw (2,5) circle [radius=0.05];
		\draw [above] (2,5) node{$B/a_{1}$};
		\filldraw (5,5) circle [radius=0.05];
		\draw [above] (5,5) node{$B/a_{2}$};
		\filldraw (8,5) circle [radius=0.05];
		\draw [above] (8,5) node{$B/a_{3}$};
	\end{tikzpicture}.
\end{center}
Thus the function $\displaystyle x^{\lambda}f\left(\frac{AB}{b_{1}},\frac{AB}{b_{2}},\frac{AB}{b_{3}},\frac{AB}{a_{1}},\frac{AB}{a_{2}},\frac{AB}{a_{3}},A,B,\frac{1}{x}\right)$ satisfies the same equation.
In addition, the equation $\mathcal{E}_{3}f(x)=0$ is symmetric for $a_{1}$, $a_{2}$, $a_{3}$, and for $b_{1}$, $b_{2}$, $b_{3}$.
Therefore the function $f(a_{i},a_{j},a_{k},b_{i'},b_{j'},b_{k'},A,B,x)$ also satisfies $\mathcal{E}_{3}f(x)=0$, where $(i,j,k)$ and $(i',j',k')$ are permutations of (1, 2, 3).
By acting elements of group $G_{3}=\langle s_{i}\mid 1\leq i\leq 6\rangle$ to the solutions in Theorem \ref{thmser3}, we obtain many solutions.
Here,
\begin{align}
	&s_{1}:(a_{1},a_{2},a_{3},b_{1},b_{2},b_{3},A,B,x)\to\left(a_{1},a_{2}\frac{a_{1}A}{qb_{1}B},a_3\frac{a_{1}A}{qb_{1}B},a_{1}\frac{A}{qB},b_{2},b_{3},A,\frac{a_{1}}{qb_{1}}A,x\right),\\
	\label{transs2}&s_{2}:(a_{1},a_{2},a_{3},b_{1},b_{2},b_{3},A,B,x)\to\left(\frac{AB}{b_{1}},\frac{AB}{b_{2}},\frac{AB}{b_{3}},\frac{AB}{a_{1}},\frac{AB}{a_{2}},\frac{AB}{a_{3}},A,B,\frac{1}{x}\right),\\
	&s_{3}:a_{1}\leftrightarrow a_{2},\ s_{4}:a_{2}\leftrightarrow a_{3},\ s_{5}:b_{1}\leftrightarrow b_{2},\ s_{6}:b_{2}\leftrightarrow b_{3}.
\end{align}
Relations among the $\{s_{i}\}$ are as follows:
\begin{align}
	&s_{i}^{2}=\mathrm{id}\ \ (i=1,\ldots,6),\\
	&(s_{1}s_{2})^{2}=(s_{1}s_{4})^{2}=(s_{1}s_{6})^{2}=(s_{1}s_{3})^{3}=(s_{1}s_{5})^{3}=\mathrm{id},\\
	&(s_{3}s_{5})^{2}=(s_{3}s_{6})^{2}=(s_{4}s_{5})^{2}=(s_{4}s_{6})^{2}=(s_{3}s_{4})^{3}=(s_{5}s_{6})^{3}=\mathrm{id},\\
	&(s_{2}s_{3})^{4}=(s_{2}s_{4})^{4}=(s_{2}s_{5})^{4}=(s_{2}s_{6})^{4}=\mathrm{id},\\
	&s_{2}s_{3}s_{2}s_{5}=s_{2}s_{4}s_{2}s_{6}=\mathrm{id}.
\end{align}
We put 
\begin{align}
\notag&
s
=
s_2 s_3 s_4 s_3 s_6 s_5 s_6 s_1 s_5 s_6 s_3 s_1 s_4 s_3 s_5 s_1: 
(a_{1},a_{2},a_{3},b_{1},b_{2},b_{3},A,B,x)
\\
&
\to 
\left(
\frac{A^3 a_2 a_3}{B b_1 b_2 b_3 q^2}, 
\frac{A^3 a_1 a_3}{B b_1 b_2 b_3 q^2}, 
\frac{A^3 a_1 a_2}{B b_1 b_2 b_3 q^2}, 
\frac{A^2}{b_1 q}, 
\frac{A^2}{b_2 q}, 
\frac{A^2}{b_3 q},
A,
\frac{A^3 a_1 a_2 a_3}{B^2 b_1 b_2 b_3 q^3},
\frac{1}{x}
 \right)
 ,
\end{align} 
then we have $s_i s = s s_i$, $(i = 1, 3, 4, 5, 6)$.
We find $G_{3}=\langle s_{1},s_{3},s_{4},s_{5},s_{6}, s\rangle=\langle s_{1},s_{3},s_{4},s_{5},s_{6}\rangle\times \langle s\rangle\simeq\mathfrak{S}_{6}\times \mathfrak{S}_{2}$.

\section{Summary and discussion}\label{secsum}
In this paper, we obtained two main results for the variant of the $q$-hypergeometric equation of degree three $\mathcal{H}_{3}f(x)=0$.
%Also, by taking some limits, we obtained many results for the variant of the $q$-hypergeometric equation of degree two $\mathcal{H}_{2}f(x)=0$, and Heine's $q$-hypergeometric equation $\mathcal{H}_{1}f(x)=0$.
A summary of these results is as follows.

Main results are Theorem \ref{thmint3} and Theorem \ref{thmser3}, which give the integral solutions and the series solutions, respectively.
These integrals and series are $q$-analogs of the integral (\ref{RiePapint}) and the series (\ref{RiePapser}), respectively.
%In section \ref{seceq}, we introduced a configuration of $q$-difference equations and characterized the equation $\mathcal{H}_3f(x) = 0$.
First we introduced a configuration of $q$-difference equations and characterized the equation $\mathcal{H}_3f(x) = 0$ in section \ref{seceq}.
Second we constructed integral solutions for $\mathcal{H}_3f(x)=0$ by using the above characterization in section \ref{secint}.
Finally we obtained series solutions for $\mathcal{H}_3f(x)=0$ by applying the transformation formula \eqref{inttoser} in section \ref{secser}.
In this way we got two main results.
Because the series solutions are expressed by the very-well-poised-balanced $q$-hypergeometric series ${}_{8}W_{7}$, we found that the variant of the $q$-hypergeometric equation of degree three is regarded as a $q$-difference equation of the Askey-Wilson function with respect to some parameter (see Remark \ref{remW87}).

There are many problems related to our results. We mention four of them here. 
\begin{itemize}
	\item[(1)] In section \ref{secint}, we obtained integral solutions for variants of the $q$-hypergeometric equation.
	In Theorem \ref{thmint3}, we found 12 solutions for the equation $\mathcal{E}_{3}f(x)=0$.
	Since the rank of the equation $\mathcal{E}_{3}f(x)=0$ is 2, there are 10 linear relations among them.
%	By definition of the Jackson integral, we obtain $\varphi_{3}(x,\tau_{1},\tau_{2})+\varphi_{3}(x,\tau_{2},\tau_{3})+\varphi_{3}(x,\tau_{3},\tau_{1})=0$.
%	Thus we find
%	\begin{align}
%		\begin{pmatrix}
%			1 & -1 & 1 & 0 & 0 & 0\\
%			1 & 0 & 0 & -1 & 1 & 0\\
%			0 & 1 & 0 & -1 & 0 & 1\\
%			0 & 0 & 1 & 0 & -1 & 1
%		\end{pmatrix}
%		\begin{pmatrix}
%			\varphi_{3}(x,q/a_{1},q/a_{2})\\
%			\varphi_{3}(x,q/a_{1},q/a_{3})\\
%			\varphi_{3}(x,q/a_{2},q/a_{3})\\
%			\varphi_{3}(x,q/a_{1},q/(Ax))\\
%			\varphi_{3}(x,q/a_{2},q/(Ax))\\
%			\varphi_{3}(x,q/a_{3},q/(Ax))
%		\end{pmatrix}
%		=\begin{pmatrix}
%			0\\
%			0\\
%			0\\
%			0
%		\end{pmatrix}.
%	\end{align}
%	The rank of this $4\times 6$ matrix is 3.
%	In the same way we find 3 linear relations for $\tilde{\varphi}_{3}(x,\sigma_{1},\sigma_{2})$.
%	There are still 4 other linear relations for the integrals.
%	We hope these relations are discovered.
%	From Remark \ref{remintdep}, we expect that the integrals $\varphi_{3}(x,q/a_{i},q/a_{j})$, $\tilde{\varphi}_{3}(x,b_{i},b_{j})$ are linearly dependent over the field of pseudo-constant $K$ in general parameters.
	In the general theory of linear $q$-difference equations (cf. \cite{Bi,Ca}), the connection problem for linear $q$-difference equations is to find the connection matrix $C(x)$ which satisfies $\bm{y}_{\infty}(x)=C(x)\bm{y}_{0}(x)$.
	Here $\bm{y}_{a}$ is a fundamental solution of the $q$-difference equation at $x=a$.
	However, the points $x=0$ and $x=\infty$ are essentially non-singular for the variant of the $q$-hypergeometric equation of degree three.
	We are sure that the linear relations for the integral solutions should be considered  instead of the connection problem between solutions at $x=0$ and $x=\infty$.
	In \cite{FN2024}, linear relations among these integrals are discussed.

	According to Remark \ref{Remdegeeq}, the variant of the $q$-hypergeometric equation of degree three becomes Heine's $q$-hypergeometric equation by some limits.
	Taking the same limits, we expect that the linear relations of integrals become connection coefficients for solutions of Heine's equation.

	\item[(2)] The variant of the $q$-hypergeometric equation $\mathcal{H}_3f(x)=0$ is a special case of the variant of the $q$-Heun equation of degree three.
	Therefore our integrals and series can be considered as the special solutions of the variant of the $q$-Heun equation of degree three.
	The  variant of the $q$-Heun equation of degree three is expressed by the eigenvalue problem for the third degeneration of Ruijsenaars-van Diejen operator $A$ of one variable.
	It is expected that special solutions of the equation $A^{\langle i\rangle}f(x)=Ef(x)$ are obtained by some hypergeometric function.
	Here, $A^{\langle i\rangle}$ is the $i$-th degeneration of $A$.
	In {\cite{Takemura2017}}, it was found that the degenerations of the Ruijsenaars-van Diejen operator correspond to the $q$-Painlev\'{e} equations.
	More precisely, the eigenvalue problem for the degenerations of the Ruijsenaars-van Diejen operator is obtained by a specialization of the linear $q$-difference equation of the Lax pair for the $q$-Painlev\'{e} equation.
	Special solutions for the $q$-Painlev\'{e} equation of type $E_{7}^{(1)}$ are also written by using the functions ${}_{8}W_{7}$.
	In addition, a special solution for the $q$-Painlev\'{e} equation of type $E_{8}^{(1)}$ is written by ${}_{10}W_{9}$.
	{For more details about $q$-hypergeometric solutions of the $q$-Painlev'{e} equations, see \cite{KMNOY}.}
	Therefore, we expect that the eigenvalue problem of the degenerated Ruijsenaars-van Diejen operator has a special solution which is written by ${}_{10}W_{9}$.
	\item[(3)] In Proposition \ref{propconqhyp}, we showed that variants of the $q$-hypergeometric equation are rigid by the configuration.
	In the differential case, a Fuchsian differential equation has an integral solution if the equation is irreducible and rigid.
	In {\cite{Ka}}, it was shown that any irreducible rigid Fuchsian differential equation is reduced to a rank 1 equation by a finite iteration of addition and middle convolution.
	This is called Katz's theory.
	Also in {\cite{DR}}, addition and middle convolution are interpreted as the operations of residue matrices of the Fuchsian equation.
	A $q$-analog of middle convolution was {introduced} in {\cite{SY}}.
	However, a $q$-analog of Katz's theory is not obtained yet.
	We hope that a $q$-analog of Katz's theory is {established}, and many $q$-hypergeometric integrals are explained by this theory.
	Recently, some solutions for the variants of the $q$-hypergeometric equation were also obtained by Arai and Takemura \cite{AT}.
	Their method is based on the $q$-middle convolution.
	It is interesting and important to compare their method with our method.
	Also it seems that our results and their results are examples of a $q$-analog of Katz's theory.
	\item[(4)]In \cite{HMST}, degenerations $\mathcal{H}_{3}f(x)=0$ to $\mathcal{H}_{2}f(x)=0$ and $\mathcal{H}_{2}f(x)=0$ to $\mathcal{H}_{1}f(x)=0$ (which is equivalent to Heine's equation) were studied.
	These are interpreted by configurations (see Remark \ref{Remdegeeq}).
	 By taking the same degenerations for solutions in Theorem \ref{thmint3} and Theorem \ref{thmser3}, we can get several solutions for $\mathcal{H}_{2}f(x)=0$ and $\mathcal{H}_{1}f(x)=0$.
	 This result and detailed calculations will be presented in next paper.
\end{itemize}

\section*{Acknowledgements}
Both authors would like to thank Professor Yasuhiko Yamada for useful discussions and valuable suggestions.
They are also grateful to Professor Wayne Rossman for careful readings and corrections in the manuscript.
This work was supported by JST SPRING, Grant Number JPMJSP2148
and JSPS KAKENHI Grant Number 22H01116.

Address:

Taikei Fujii

Department of Mathematics, 
Graduate School of Science, 
Kobe University, 
Rokko, Kobe 657-8501, Japan.
E-mail: tfujii@math.kobe-u.ac.jp

Takahiko Nobukawa

Department of Education,
Kogakkan University,
%1704 Kodakushimoto-cho,
 Ise, Mie 516-8555, Japan.
 E-mail: t-nobukawa@kogakkan-u.ac.jp
%Department of Mathematics, 
%Graduate School of Science, 
%Kobe University, 
%Rokko, Kobe 657-8501, Japan.
%E-mail: tnobukw@math.kobe-u.ac.jp
\end{document}